\documentclass[a4paper,10pt,reqno]{amsart}

\usepackage[T1]{fontenc}
\usepackage{caption}
\usepackage[a4paper,top=2cm,bottom=2cm,left=2.2cm,right=2.2cm]{geometry}

\usepackage{titlesec}
\usepackage{lipsum}
\usepackage{enumerate}
\usepackage{enumitem}
\usepackage{mathrsfs}
\usepackage{amsmath,amssymb,amsthm,bm}
\usepackage{mathtools}
\usepackage{graphicx, float, subfigure}
\usepackage{url}
\usepackage{tikz}
\usepackage{bbold}
\usepackage{color}
\usepackage{xcolor}

\usepackage[all,cmtip]{xy}
\usepackage{multirow}
\usepackage{makecell}
\usepackage[colorinlistoftodos]{todonotes}
\usepackage[colorlinks=true, allcolors=red]{hyperref}
\usepackage{harpoon}
\usepackage{MnSymbol}
\usepackage{esvect}
\usepackage{diagbox}
\usepackage[title]{appendix}

\DeclareMathOperator{\sgn}{sgn}
\DeclareMathOperator{\wt}{wt}

\theoremstyle{plain}
\newtheorem{theorem}{\scshape Theorem}[section]

\newtheorem{lemma}[theorem]{\scshape Lemma}
\newtheorem{corollary}[theorem]{\scshape Corollary}

\newtheorem*{assumption*}{\scshape Assumption}

\newtheorem*{claim*}{Claim}

\theoremstyle{definition}

\newtheorem{definition}[theorem]{\scshape Definition}
\newtheorem{remark}[theorem]{\scshape Remark}

\newcommand{\M}{\operatorname{M}}

\renewcommand{\det}{\mathsf{det}}
\renewcommand{\wt}{\mathsf{wt}}
\renewcommand{\r}{\mathbf{r}}
\renewcommand{\b}{\mathsf{block}}
\newcommand{\x}{\mathbf{x}}

\numberwithin{equation}{section}

\titleformat{\section}{\centering\bfseries}{\thesection}{1em}{\MakeUppercase}
\titleformat{\subsection}{\bfseries}{\thesubsection}{1em}{}


\setlength{\parskip}{2mm}

\begin{document}
\title{Block diagonally symmetric lozenge tilings}
\thanks{S.H.B. was supported in part by an AMS-Simons Travel Grant.}
%
%
\author{Seok Hyun Byun and Yi-Lin Lee}
\address{(S. H. Byun) Department of Mathematics, Amherst College, Amherst, Massachusetts 01002, U.S.A.}
\email{sbyun@amherst.edu}

\address{(Y.-L. Lee) Department of Mathematics, National Cheng Kung University, Tainan 70101, Taiwan}
\email{yillee@gs.ncku.edu.tw}

%

\begin{abstract}
We introduce a new symmetry class of both plane partitions and lozenge tilings of a hexagon, called the \textit{$\mathbf{r}$-block diagonal symmetry class}, where $\mathbf{r}$ is an $n$-tuple of non-negative integers. We prove that the tiling generating function of this symmetry class under a certain weight assignment is given by a simple product formula. When $\mathbf{r}$ and weights are suitably chosen, our formula coincides with the number of domino tilings of the Aztec diamond and the number of alternating sign matrices up to a simple constant. We also deduce the volume generating function of $\mathbf{r}$-block diagonally symmetric plane partitions. Additionally, we consider \textit{$(\mathbf{r},\mathbf{r^{\prime}})$-block diagonally symmetric} lozenge tilings by embedding the hexagon into a cylinder and present an identity for the signed enumeration of this symmetry class in specific cases. Two methods are provided to study this symmetry class: (1) the method of non-intersecting lattice paths with a modification and (2) interpreting weighted lozenge tilings algebraically as (skew) Schur polynomials and applying the dual Pieri rule.
\end{abstract}
\subjclass{05A15, 05A19, 05B45, 05E05}
\keywords{Lozenge tilings; non-intersecting lattice paths; Pieri rule; Schur polynomials; symmetry class}
\maketitle

\section{Introduction}\label{sec:intro}

Consider a triangular lattice oriented so that one family of lattice lines is vertical. A \textit{lozenge} is a union of two adjacent unit triangles on the lattice. A \textit{lozenge tiling} of a region on the lattice is a collection of lozenges that covers the region without gaps or overlaps. In enumerative combinatorics, it is standard to assign weights to individual lozenges such that the weight of a tiling---defined as the product of weights of all lozenges that constitute the tiling---yields a meaningful combinatorial interpretation.

Throughout this paper, we write $\M_{\wt}(H)$ for the \textit{tiling generating function} of $H$ under weight assignments on individual lozenges in $H$ determined by a weight function $\wt$, which is the sum of the weights of all tilings of $H$. We will clearly specify different weight functions $\wt$ that are considered in the paper. If all the weights are $1$, then we simply write $\M(H)$ for the number of tilings of $H$.

An \textit{$(a \times b \times n)$-plane partition} is an $a \times b$ array $\pi=(\pi_{i,j})$ of non-negative integers such that $\pi_{i,j} \leq n$ and these integers are weakly decreasing along rows and columns, that is, $\pi_{i,j} \geq \pi_{i+1,j}$ and $\pi_{i,j} \geq \pi_{i,j+1}$. Let $PP^n( a \times b)$ denote the set of $(a \times b \times n)$- plane partitions. Let $H(n,a,b)$ be a hexagon on the triangular lattice with sides of length $n$, $a$, $b$, $n$, $a$, and $b$ in clockwise order from the left side. The classical result of MacMahon \cite{MacM} on the enumeration of $PP^n( a \times b)$ can be rephrased as the enumeration of lozenge tilings of $H(n,a,b)$, via the bijection due to David and Tomei \cite{DT} (see Figure \ref{fig:PPcorrespondence}). This number is given by the following simple product formula:
\begin{equation}\label{eq.MacMbox}
    |PP^{n}(a \times b)| = \M(H(n,a,b)) = \prod_{i=1}^{a}\prod_{j=1}^{b}\frac{n+i+j-1}{i+j-1}.
\end{equation}

Since a plane partition can be viewed as a pile of unit cubes (compare the left and the middle pictures in Figure \ref{fig:PPcorrespondence}), it is natural to consider a weighted enumeration of $PP^n( a \times b)$ by the \textit{volume} of these piles. MacMahon \cite{MacM} established the following volume generating function of $PP^n( a \times b)$:
\begin{equation}\label{eq.MacMboxq}
    \sum_{\pi \in PP^n(a\times b)}q^{|\pi|} = \prod_{i=1}^{a}\prod_{j=1}^{b} \frac{1-q^{n+i+j-1}}{1-q^{i+j-1}},
\end{equation}
where $|\pi| = \sum_{i,j}\pi_{i,j}$ is the sum of all the entries of $\pi$, which is equivalent to the volume of the pile of unit cubes. On the other hand, the above volume generating function, up to a simple multiplicative factor, can be reinterpreted as the sum of weighted lozenge tilings of $H(n,a,b)$ by assigning the weight $q^k$ to each horizontal lozenge where the distance between the southeastern side of such lozenge and the southeastern side of $H(n,a,b)$ is $\frac{\sqrt{3}}{2}k$ (see the right picture in Figure \ref{fig:PPcorrespondence}, where shaded lozenges labeled by $k$ are weighted by $q^k$). The other lozenges are weighted by $1$.
\begin{figure}[hbt!]
    \centering \includegraphics[width=0.6\textwidth]{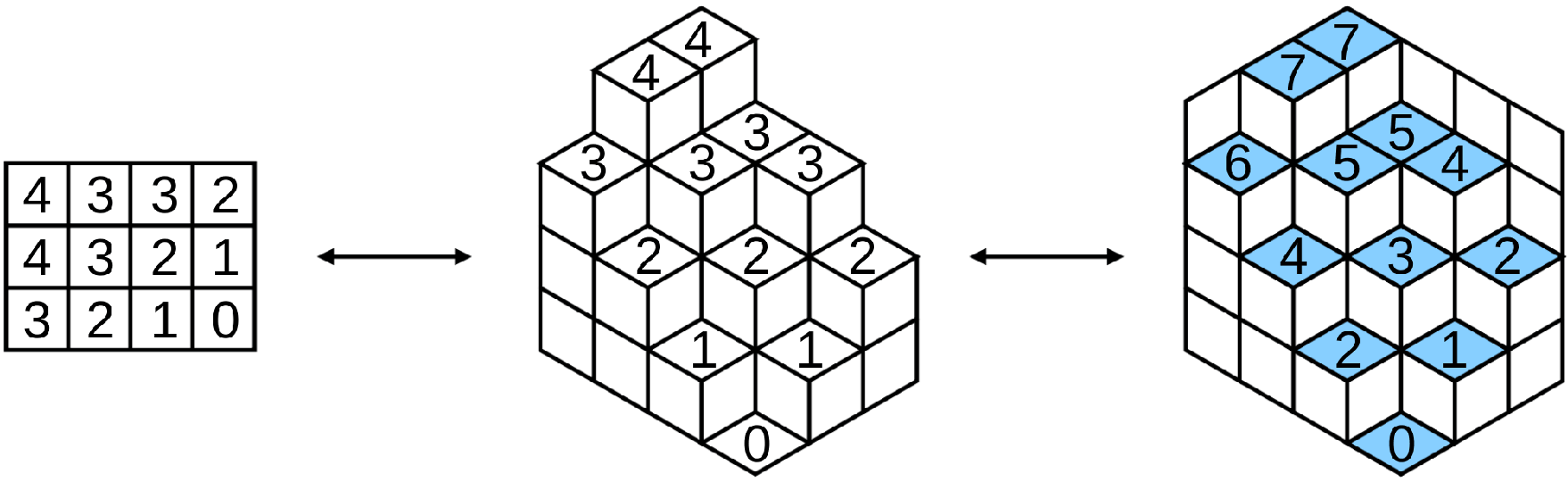}
\caption{The correspondence between a plane partition, its 3D realization as a pile of unit cubes, and a lozenge tiling of $H(4,3,4)$ (from left to right). Note that if we add $3$, $2$, $1$, and $0$ to the numbers in the first, second, third, and last columns in the plane partition, respectively, we obtain the numbers marked on the shaded lozenge in the right picture. Each shaded lozenge marked by $k$ is weighted by $q^k$.}
\label{fig:PPcorrespondence}
\end{figure}

In the 1970s and 80s, the study of symmetry classes of plane partitions drew the attention of researchers in enumerative combinatorics. Enumerating symmetry classes is difficult in general since the structure of each class varies with different symmetry axes. These usually require different methods to enumerate them. Stanley \cite{Stan86} identified ten symmetry classes of plane partitions, and remarkably, each of them is enumerated by elegant product formulas. We refer the reader to the survey paper by Krattenthaler \cite[Section 6]{Kra16} for a modern update to Stanley’s paper.

For instance, one of the symmetry classes, called the \textit{(diagonally) symmetric} plane partitions, contains plane partitions $\pi \in PP^n(m \times m)$ that are invariant under taking the transpose, i.e. $\mathsf{tr}(\pi) = \pi$. This can be viewed as lozenge tilings of $H(n,m,m)$ that are symmetric about the vertical diagonal, and a generating function of this symmetry class is given by the following product formula; see \cite[Section 2.2]{HL21} and references therein:
\begin{equation}\label{eq.MacMsscq1}
    \sum_{\substack{\pi \in PP^n(m \times m) \\ \mathsf{tr}(\pi) = \pi}}q^{|\pi|'} = \prod_{1 \leq i \leq j \leq m} \frac{1-q^{n+i+j-1}}{1-q^{i+j-1}},
\end{equation}
where $|\pi|' = \sum_{i \leq j}\pi_{i,j}$ is a kind of ``half-size'' of $\pi$. Taking the limit $q\rightarrow1$, then \eqref{eq.MacMsscq1} enumerates this symmetry class:
\begin{equation}\label{eq.MacMssc}
    |\{ \pi \in PP^n(m \times m )~|~ \mathsf{tr}(\pi) = \pi \}| = \prod_{1 \leq i \leq j \leq m} \frac{n+i+j-1}{i+j-1}.
\end{equation}

In this paper, we introduce a new symmetry class, called the \textit{$\r$-block diagonal symmetry class}, of both plane partitions (see Definition \ref{def.blocksympp}) and lozenge tilings of a hexagonal region (see Definition \ref{def.rblock}). This symmetry class is motivated by the second author's previous work \cite{Lee1} on off-diagonally symmetric domino tilings of Aztec diamonds. The first main result (as shown in Theorem \ref{thm:1}) presents a product formula for the weighted enumeration of $\r$-block diagonally symmetric lozenge tilings of $H(n,m,m)$, where our weight generalizes the ``volume'' of MacMahon's plane partitions. When $\mathbf{r}$ and weights are suitably chosen, our formula coincides with the number of domino tilings of the Aztec diamond (when $\mathbf{r}=(1,\ldots,1)$ and specializing all weights to $1$) and the number of alternating sign matrices (when $\mathbf{r}=(2,\ldots,2)$ and specializing all weights to $1$) up to a simple constant; see the paragraph after Corollary \ref{cor:1}. We also deduce the volume generating function of $\r$-block diagonally symmetric plane partitions (Theorem \ref{thm:1.5}). 

Moreover, we generalize the idea of $\r$-block diagonally symmetric lozenge tilings to \textit{$(\r,\r^{\prime})$-block diagonally symmetric} lozenge tilings, which can be viewed as lozenge tilings of the region obtained by gluing two opposite vertical sides of $H(n,m,m)$ and embedding it into a cylinder. We impose our ``block diagonal symmetry'' condition on the vertical symmetry axis of $H(n,m,m)$ and on the two opposite vertical sides of $H(n,m,m)$ that were glued. Our second result (as shown in Theorem \ref{thm:2}) provides an identity involving weighted $(\r,\r^{\prime})$-block diagonally symmetric lozenge tilings for particular cases.

We provide two methods to study $\r$-block diagonally symmetric lozenge tilings.

The first method is based on the method of non-intersecting paths (see \cite[Section 3.1]{Propp15}), which is one of the widely used techniques for enumerating domino or lozenge tilings. This requires the following three steps:
\begin{itemize}
    \item[(1)] Establish a one-to-one correspondence between tilings and families of non-intersecting lattice paths.

    \item[(2)] Apply the Lindstr\"om--Gessel--Viennot (LGV) theorem (\cite{Lind} and \cite{GV}) or Stembridge's Pfaffian generalization (\cite{Stem90}) to construct matrices whose determinants or Pfaffians give the desired number. 
    
    \item[(3)] Evaluate the determinants or Pfaffians. In this step, people try to find closed-form expressions.
\end{itemize}
Step (2) requires specific conditions on the starting and ending points of families of non-intersecting lattice paths (see Section \ref{sec:latticepath} for details). Step (3) is more difficult in general. Deriving a closed-form expression is not always possible; successful evaluations typically require either sophisticated algebraic manipulations of the matrix entries or a structural insight into the matrices (for example, see \cite{Krattenthaler} and \cite{Krattenthaler2}).

Unfortunately, neither of the theorems can be directly applied to enumerate this symmetry class due to the $\r$-block diagonal symmetry condition. One of our main contributions is to resolve this issue; we modify the corresponding lattice paths so that the LGV theorem works.

The second method is inspired by the relation between weighted lozenge tilings of a trapezoidal region with certain boundary defects and Schur polynomials (see \cite[Theorem 2.3]{AF}). The $\r$-block diagonally symmetric weighted lozenge tilings are given by a sum of Schur polynomials $s_{\lambda}$ for some particular partitions $\lambda$. We then obtain our product formula by the dual Pieri rule (see, for example, \cite[Page 340]{Stanley}) and the specialization of Schur polynomials. A similar proof based on the dual Pieri rule is given for $(\r,\r^{\prime})$-block diagonally symmetric lozenge tilings (Theorem \ref{thm:2}).

The paper is organized as follows. In Section \ref{sec:main}, we introduce the $\r$-block diagonal symmetry class, $(\r,\r^{\prime})$-block diagonal symmetry class, and then state our main results. We provide two methods to study these symmetry classes. In Section \ref{sec:latticepath}, we present the first method, which is based on the method of non-intersecting lattice paths with a modification, and the proof of our first main result (Theorem \ref{thm:1}). In Section \ref{sec:algproof}, we provide the second method by expressing weighted lozenge tilings as (skew) Schur polynomials and using the dual Pieri rule. An alternative proof of the first main result and the proof of our second main result (Theorem \ref{thm:2}) are given.

\section{Statement of main results}\label{sec:main}

We will make use of the following notations. For any positive integer $k$, define $[k]_q:= \frac{1-q^k}{1-q}$ and $[k]_q! :=[k]_q \cdots [1]_q$ with $[0]_q:=1$ and $[0]_q!:= 1$. The $q$-binomial coefficient $\begin{bmatrix} n\\k \end{bmatrix}_q$ is defined to be $\frac{[n]_q!}{[k]_q![n-k]_q!}$ if $n\geq k\geq0$ and $0$ otherwise. Lastly, we set $[k]:=\{1,\ldots,k\}$.

In Section \ref{sec:off-diag}, we first review the off-diagonal symmetry class of domino tilings of Aztec diamonds and then extend this notion to lozenge tilings of hexagons. In Section \ref{sec:rblocklozenge}, we introduce $\r$-block diagonally symmetric lozenge tilings and state the first main result (Theorem \ref{thm:1}). In Section \ref{sec:rblockpp}, we introduce $\r$-block diagonally symmetric plane partitions and explain how to deduce the volume generating function of $\r$-block diagonally symmetric plane partitions from Theorem \ref{thm:1}. In Section \ref{sec:rrblock}, we generalize the $\r$-block diagonal symmetry class to the $(\r,\r^{\prime})$-block diagonal symmetry class and present the second main result (Theorem \ref{thm:2}).

\subsection{The off-diagonal symmetry class}\label{sec:off-diag}

The \textit{Aztec diamond of order $n$}, denoted by $AD(n)$, is the union of all unit squares in the region $|x|+|y|\leq n+1$; see Figure \ref{fig:offAD1}, ignore the six marked $2 \times 2$ squares at this point. A \textit{domino} is a $1 \times 2$ or $2 \times 1$ rectangle. A \textit{domino tiling} of $AD(n)$ is a collection of dominoes that covers $AD(n)$ without gaps or overlaps. Elkies, Kuperberg, Larsen, and Propp \cite{EKLP1, EKLP2} proved that the number of domino tilings of $AD(n)$ is $2^{n(n+1)/2}$.

\begin{figure}[hbt!]
    \centering
    \subfigure[]{\label{fig:offAD1}\includegraphics[width=0.25\textwidth]{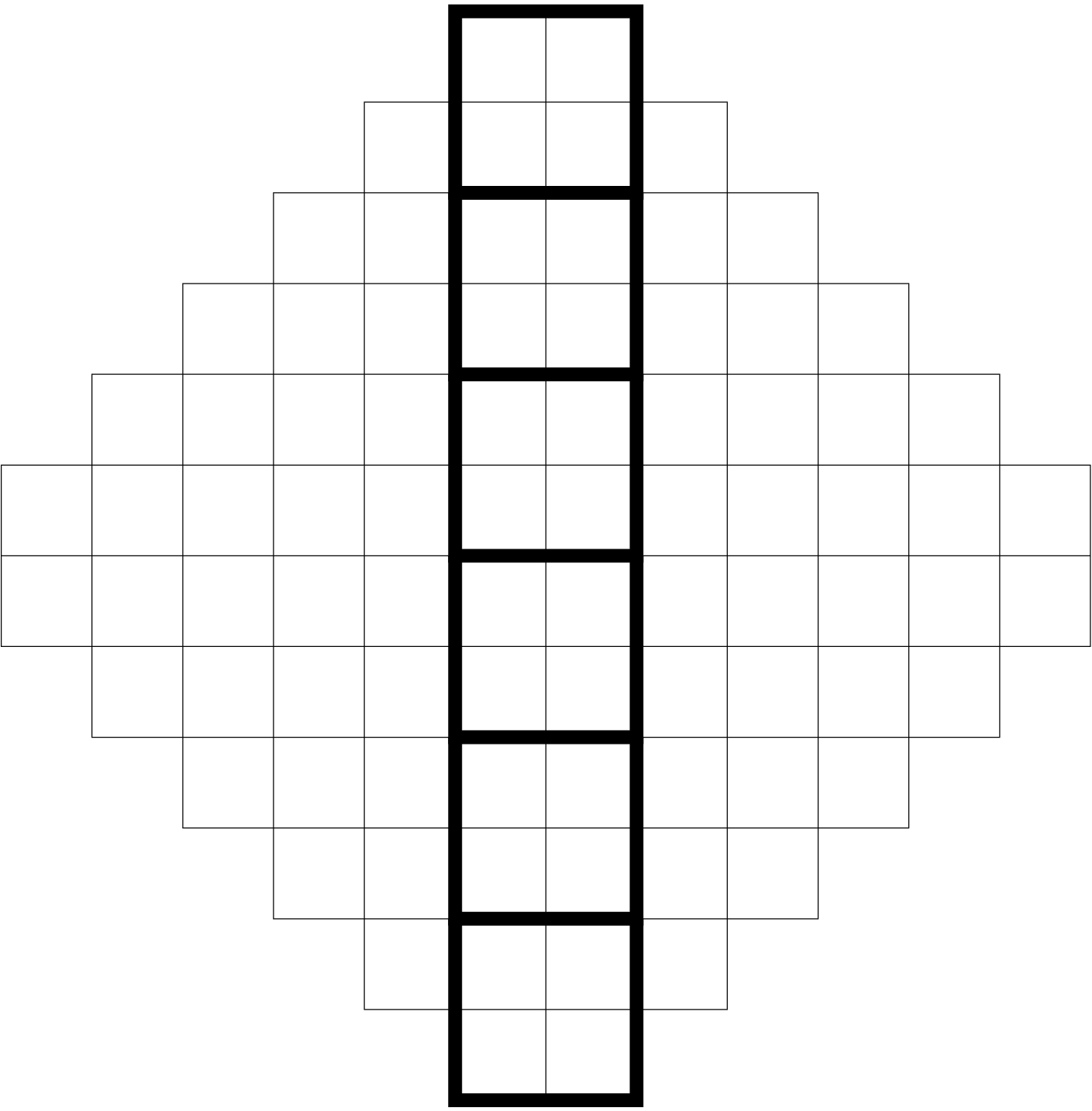}}
    \hspace{15mm}
    \subfigure[]{\label{fig:offAD2}\includegraphics[width=0.25\textwidth]{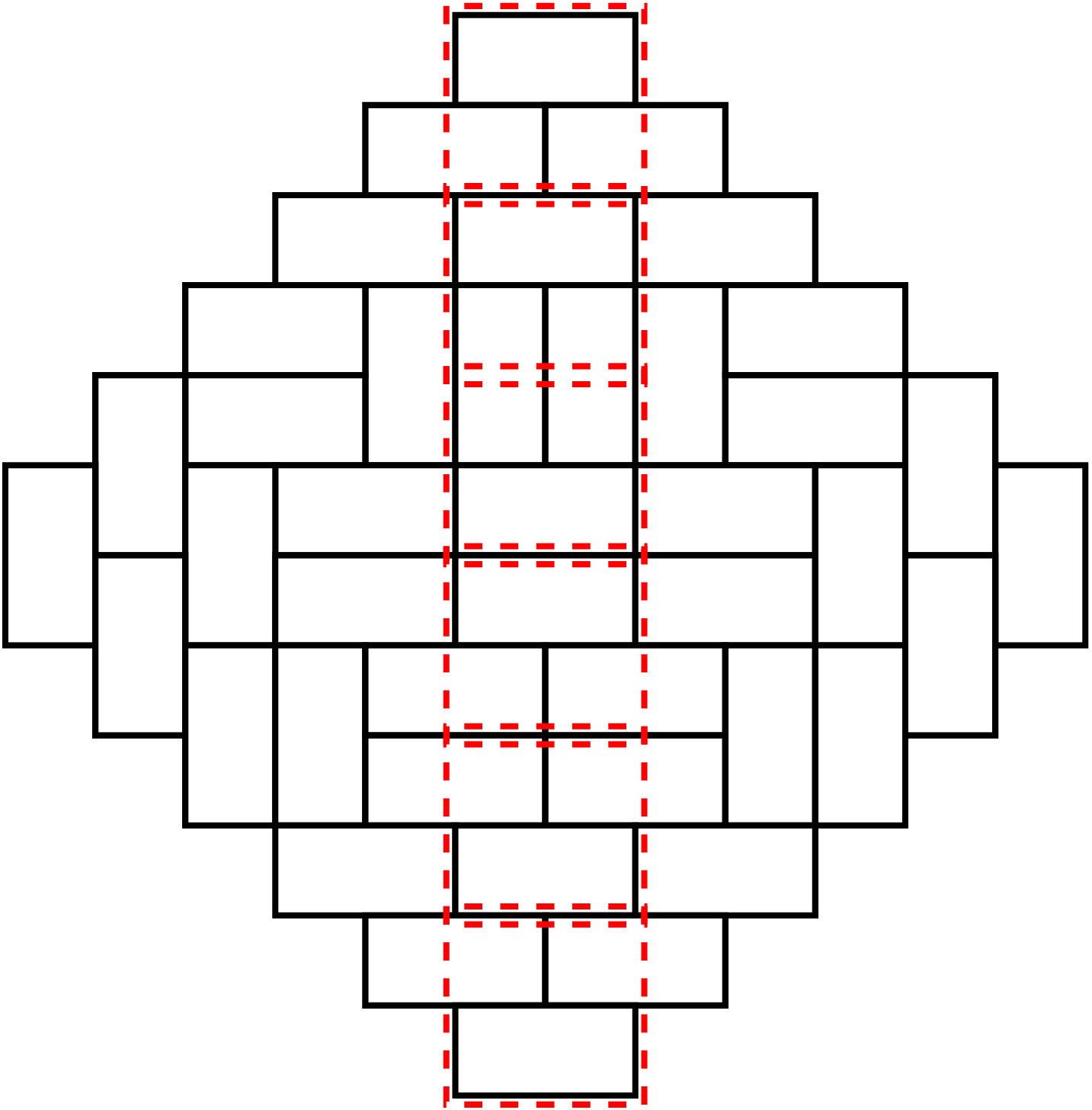}}
    \caption{(a) $AD(6)$ with the six cells marked. (b) An off-diagonally symmetric domino tiling of $AD(6)$.}
\end{figure}

Let $\ell$ be the vertical symmetry axis of $AD(n)$. Notice that there are $n$ non-overlapping $2 \times 2$ squares on $\ell$ that are bisected by $\ell$ (see the six marked squares in Figure \ref{fig:offAD1}). Each of these $n$ non-overlapping $2 \times 2$ squares is called a \textit{cell}. A domino tiling of $AD(n)$ is \textit{off-diagonally symmetric} (\cite[Definition 1]{Lee1}) if
\begin{itemize}
    \item the tiling is symmetric about $\ell$, and
    \item each such cell contains exactly one full horizontal domino.
\end{itemize}
Figure \ref{fig:offAD2} shows an off-diagonally symmetric domino tiling of $AD(6)$.

The second condition from the above definition is a natural analog of \textit{off-diagonally symmetric alternating sign matrix} introduced by Kuperberg \cite{Kuperberg}. In fact, the number of off-diagonally symmetric domino tilings of $AD(n)$ is a weighted enumeration, known as the \textit{$2$-enumeration}, of off-diagonally symmetric alternating sign matrices of size $n$. See \cite[Section 1.2]{Lee1} and \cite[Section 1.1]{Lee2} for more details about how the off-diagonal symmetry classes of these two objects are related. 

The off-diagonal symmetry classes of alternating sign matrices and domino tilings of $AD(n)$ have many nice properties. For example, Kuperberg \cite{Kuperberg} showed that the number of off-diagonally symmetric alternating sign matrices is given by a simple product formula (see also equation (126) in \cite{BFK23}). In the case of off-diagonally symmetric domino tilings of $AD(n)$, the cardinality is given by a Pfaffian of a certain matrix whose entries satisfy a simple recurrence relation \cite[Theorem 2 and Theorem 4]{Lee1}, and also has a symmetry property \cite[Theorem 1.2]{Lee2}. 

Now, we extend the notion of the off-diagonal symmetry to lozenge tilings of hexagons as follows. Let $\ell$ be the vertical symmetry axis of $H(n,n,n)$, the regular hexagon of side length $n$. Note that $\ell$ can be covered by $n$ non-overlapping unit hexagons that are bisected by $\ell$ as shown in Figure \ref{fig:offHex1}. Each such unit hexagon is called a \textit{$1$-cell}.

\begin{figure}[hbt!]
    \centering
    \subfigure[]{\label{fig:offHex1}\includegraphics[width=0.25\textwidth]{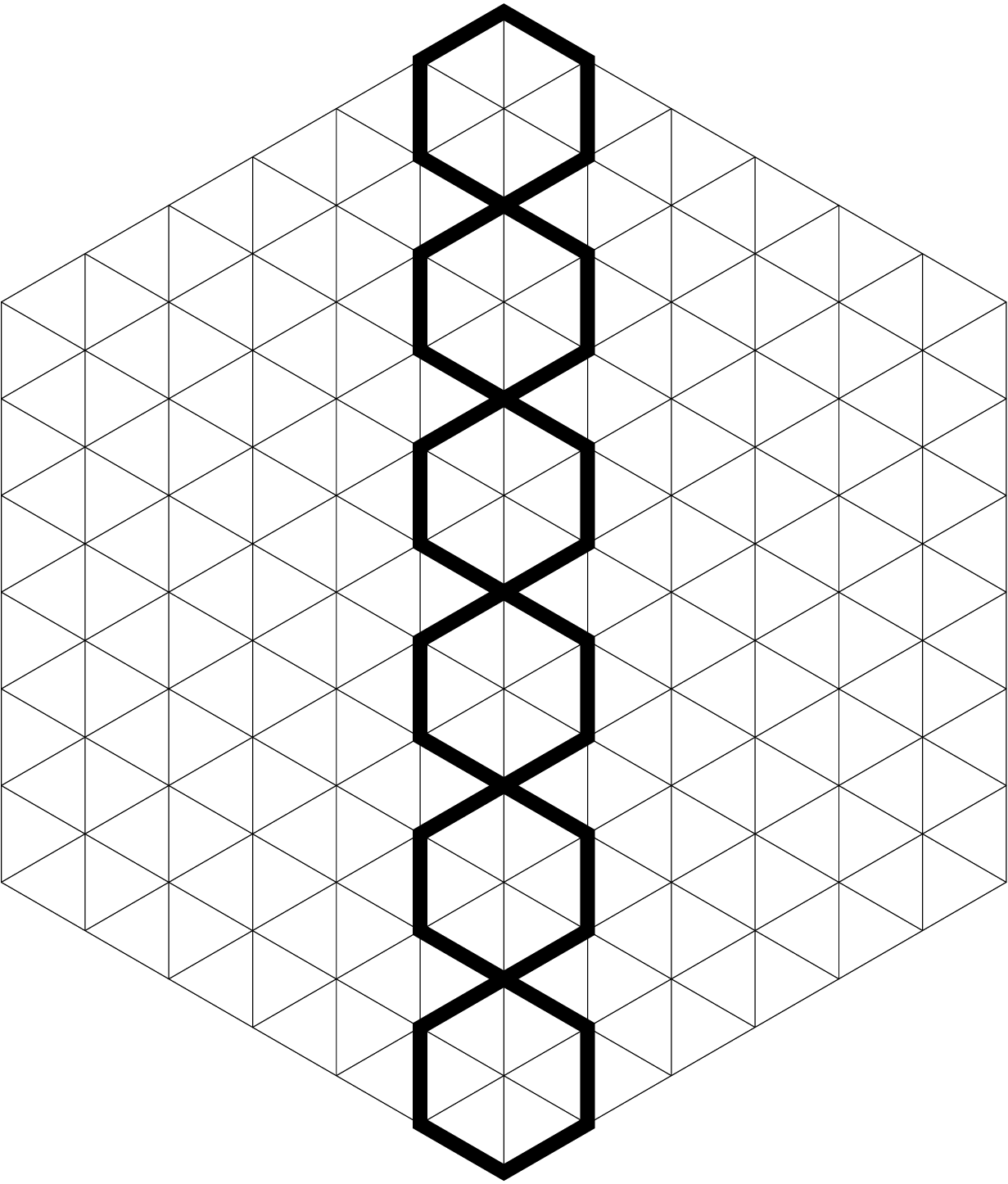}}
    \hspace{15mm}
    \subfigure[]{\label{fig:offHex2}\includegraphics[width=0.25\textwidth]{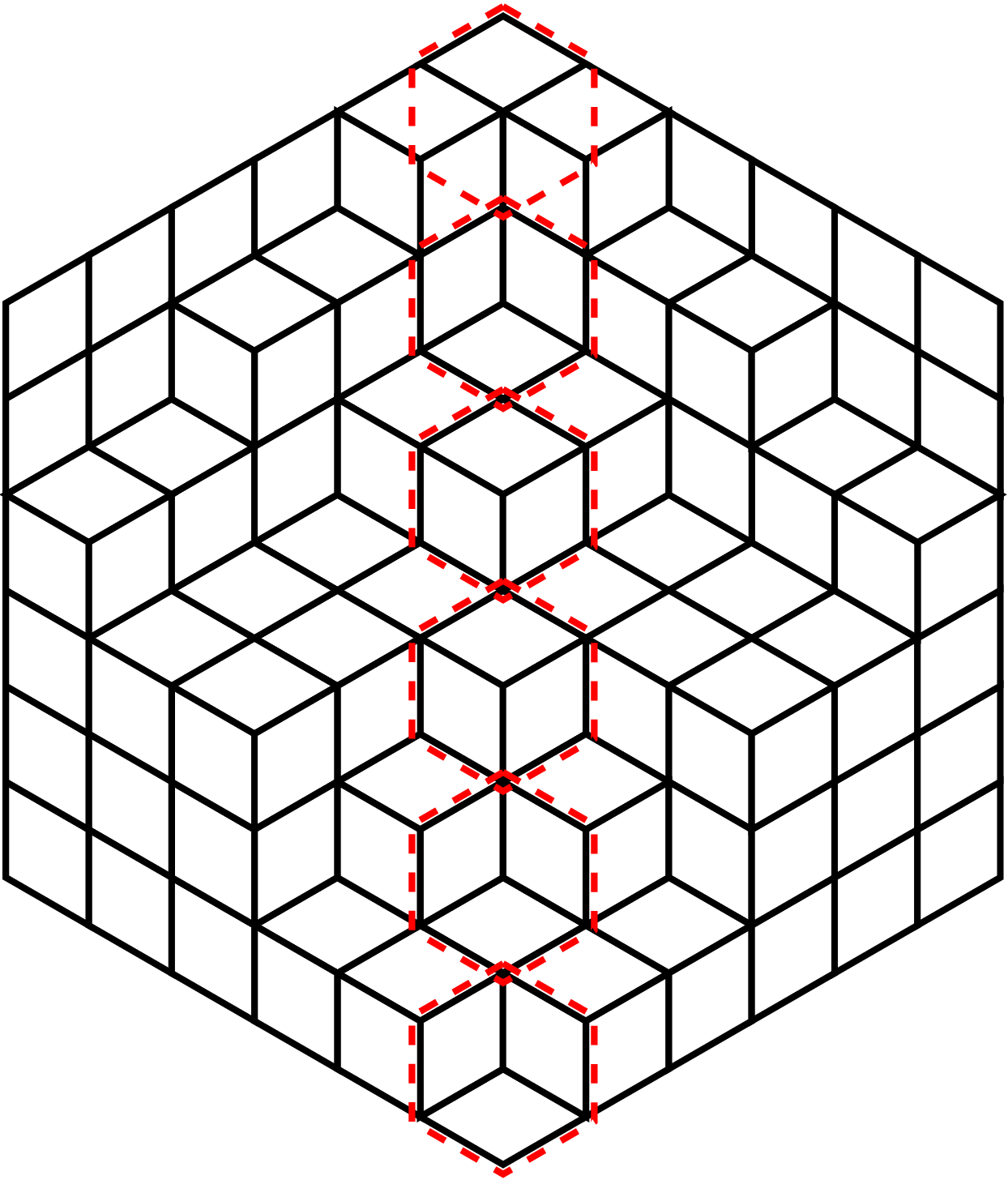}}
    \caption{(a) $H(6,6,6)$ with six $1$-cells marked. (b) An off-diagonally symmetric lozenge tiling of $H(6,6,6)$.}
\end{figure}
\begin{definition}\label{def.offhex}
    A lozenge tiling of $H(n,n,n)$ is \textit{off-diagonally symmetric} if
    \begin{itemize}
        \item the tiling is symmetric about $\ell$, and
        \item each $1$-cell contains exactly one full horizontal lozenge.
    \end{itemize}
\end{definition}
\noindent Figure \ref{fig:offHex2} depicts an off-diagonally symmetric lozenge tiling of $H(6,6,6)$.

We first notice that the number of off-diagonally symmetric lozenge tilings of $H(n,n,n)$ is $2^{n(n+1)/2}$, which coincides with the number of domino tilings of $AD(n)$. This surprising correspondence motivates us to generalize this symmetry class to the $\r$-block diagonal symmetry class (introduced in Section \ref{sec:rblocklozenge}). Indeed, we prove that this result can be obtained as a special case of Corollary \ref{cor:1}. 

\subsection{$\r$-block diagonally symmetric lozenge tilings}\label{sec:rblocklozenge}

For any positive integers $m$ and $n$, the hexagon $H(n,m,m)$ is symmetric across the vertical diagonal $\ell$. Note that the vertical diagonal has length $m+n$. When $m=1$, we call the region $H(k,1,1)$ a \textit{$k$-cell}. Note that when $k=0$, the $0$-cell reduces to a horizontal lozenge, as shown in Figure \ref{fig:blockHex1}.

Consider an $n$-tuple of non-negative integers $\mathbf{r}=(r_1,\dots,r_n)$ such that their sum is equal to $m$, that is, $|\mathbf{r}|\coloneqq r_1+\cdots+r_n=m$. Since the sum $(r_1+1)+\cdots+(r_n+1)=m+n$ is equal to the length of the vertical diagonal of $H(n,m,m)$, we can uniquely place an $r_1$-cell, an $r_2$-cell,..., and an $r_n$-cell on $\ell$ from bottom to top inside $H(n,m,m)$, such that these $n$ cells do not overlap with each other and they are bisected by $\ell$ (see Figure \ref{fig:blockHex1} for an example). We use $S_k = \sum_{i=1}^{k}r_i$ to denote the partial sum of an $n$-tuple $(r_1,\dots,r_n)$ for $k=1,\dots,n$, with the convention $S_{0}=0$. In particular, $S_{n}=m$.

\begin{figure}[hbt!]
    \centering
    \subfigure[]{\label{fig:blockHex1}\includegraphics[width=0.32\textwidth]{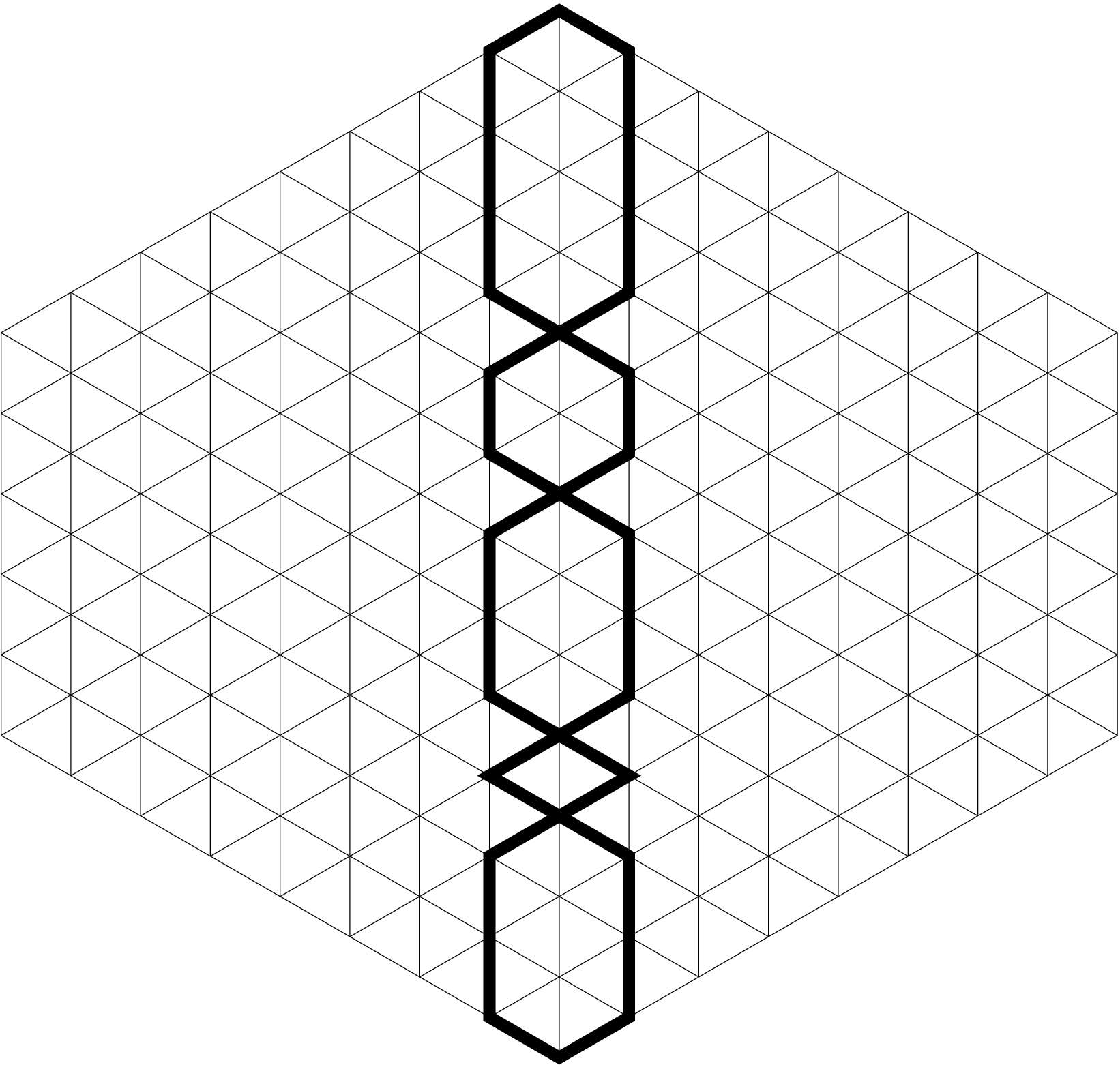}}
    \hspace{15mm}
    \subfigure[]{\label{fig:blockHex2}\includegraphics[width=0.32\textwidth]{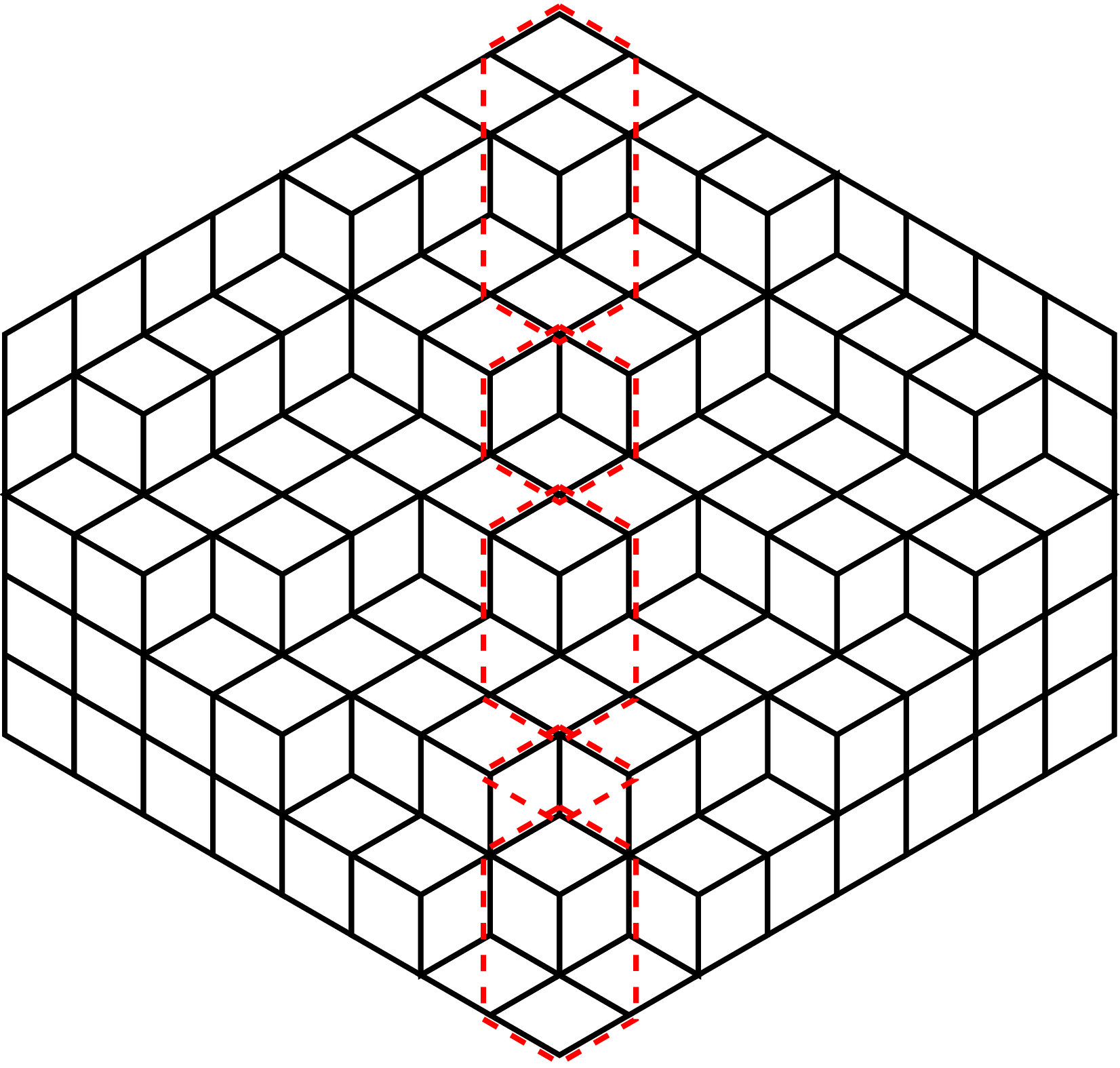}}
    \caption{(a) A symmetric hexagon $H(5,8,8)$ with a $2$-cell, a $0$-cell, a $2$-cell, a $1$-cell, and a $3$-cell marked (from bottom to top). (b) A $(2,0,2,1,3)$-block diagonally symmetric lozenge tiling of $H(5,8,8)$.}
\end{figure}
\begin{definition}\label{def.rblock}
    A lozenge tiling of $H(n,m,m)$ is \textit{$\mathbf{r}$-block diagonally symmetric} if
    \begin{itemize}
        \item the tiling is symmetric about $\ell$, and
        \item the $r_i$-cell contains exactly $r_i$ full horizontal lozenges for $i=1,\dots,n$.
    \end{itemize}
\end{definition}
We denote the number of $\mathbf{r}$-block diagonally symmetric lozenge tilings of $H(n,m,m)$ by $\M^{\r}(H(n,m,m))$. For example, a $(2,0,2,1,3)$-block diagonally symmetric lozenge tiling of $H(5,8,8)$ is given in Figure \ref{fig:blockHex2}. This definition generalizes the notion of off-diagonally symmetric lozenge tilings of $H(n,n,n)$ given in Definition \ref{def.offhex}; it is the special case of $\mathbf{r}$-block diagonally symmetric lozenge tilings where $r_1=\cdots=r_n=1$.

Consider the left half subregion of $H(n,m,m)$, which is a trapezoidal region with sides of length $n$, $m$, $m+n$, and $m$ in a clockwise order from the left, and denote it by $T(n,m)$ (see Figure \ref{fig:blockTrap2}; the boundary trapezoid is $T(5,8)$). We then label the $m+n$ left-pointing unit triangles along the right side of $T(n,m)$ by $1,\ldots,m+n$ from bottom to top. For any subset $P \subset[m+n]$ of cardinality $m$, let $T(n,m;P)$ denote the region obtained from $T(n,m)$ by deleting unit triangles labeled by elements in $P$. Throughout this paper, the elements in the set of labels are written in increasing order. 

One important observation is the following. Due to the symmetry condition, symmetric lozenge tilings of $H(n,m,m)$ are uniquely determined by the lozenges placed strictly left to $\ell$ (see Figure \ref{fig:blockTrap}). For an $(r_1,\ldots,r_n)$-block diagonally symmetric lozenge tiling of $H(n,m,m)$, lozenges that are strictly left to $\ell$ form a lozenge tilings of $T(n,m;P)$, where $P$ satisfies the following condition: 
\begin{itemize}
    \item $P$ contains $r_{1}$ elements among $\{1,\ldots,r_{1}+1\}=[S_{1}+1]$,
    \item $P$ contains $r_{2}$ elements among $\{r_{1}+2,\ldots,r_1+r_2+2\}=[S_{2}+2]\setminus[S_{1}+1]$,
    \item[] \begin{center} $\vdots$ \end{center}
    \item $P$ contains $r_{n}$ elements among $\{r_{1}+\cdots+r_{n-1}+n,\ldots,r_{1}+\cdots+r_{n}+n\}=[S_{n}+n] \setminus [S_{n-1}+n-1]$.
\end{itemize}
These conditions are equivalent to 
\begin{equation}\label{eq.condition}
    |P\cap([S_{k}+k]\setminus[S_{k-1}+k-1])|=r_{k},
\end{equation}
for each $k=1,\ldots,n$.

Conversely, given a lozenge tiling of $T(n,m;P)$ with the set $P$ subject to the condition \eqref{eq.condition}, we can uniquely determine an $\mathbf{r}$-block diagonally symmetric lozenge tiling of $H(n,m,m)$. It can be obtained by reflecting the lozenge tiling of $T(n,m;P)$ across the right boundary of $T(n,m)$ and filling each dent of $T(n,m;P)$ and its mirror image with a full lozenge. From this correspondence, one can see that the number of horizontal lozenges that cross $\ell$ equals the cardinality of $P$. This is because the region $T(n,m)$ consists of $m$ more left-pointing unit triangles than right-pointing ones, and $T(n,m;P)$ has no tilings unless $|P|=m$.
\begin{figure}[hbt!]
    \centering
    \subfigure[]{\label{fig:blockTrap1}\includegraphics[width=0.32\textwidth]{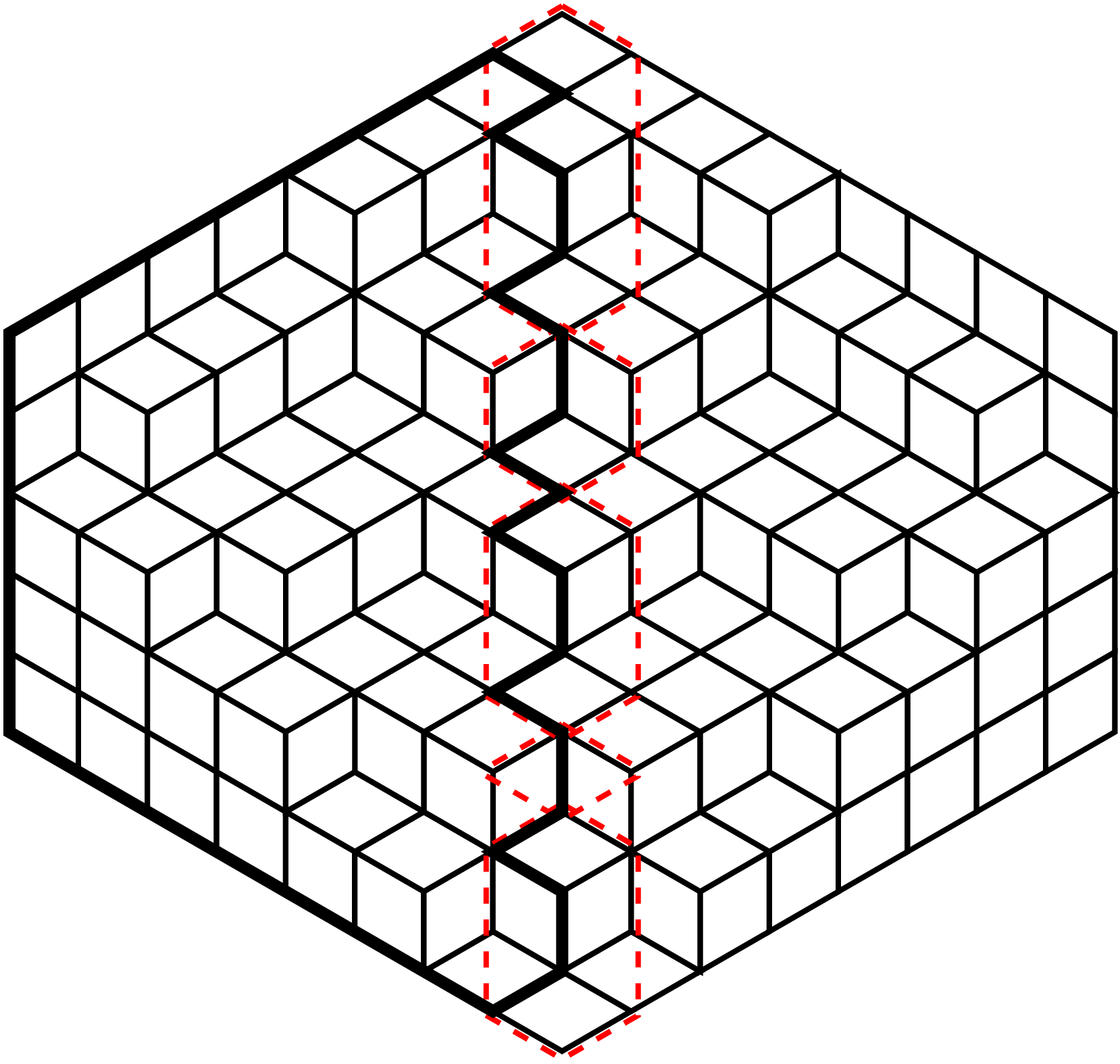}}
    \hspace{15mm}
    \subfigure[]{\label{fig:blockTrap2}\includegraphics[width=0.16\textwidth]{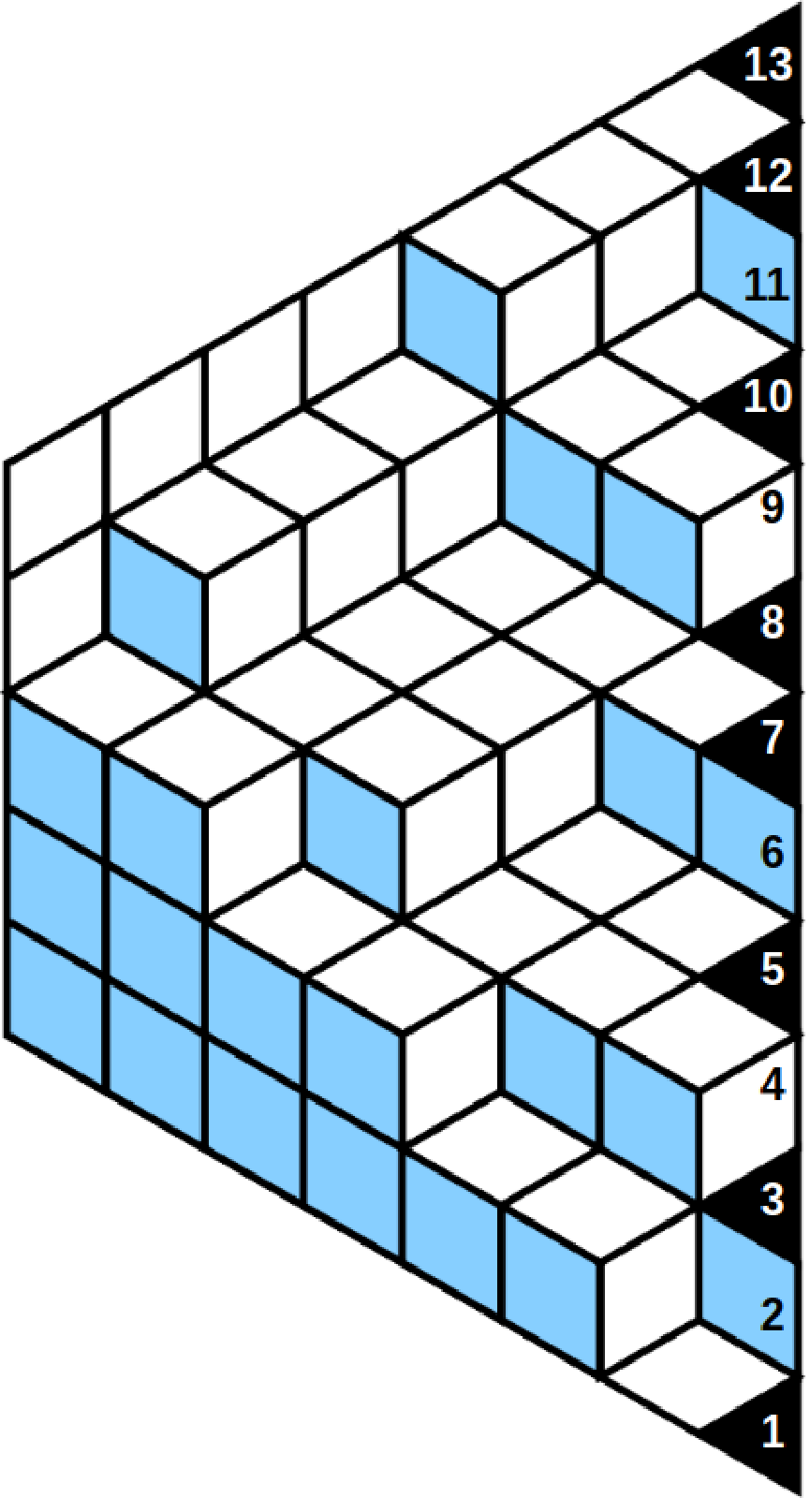}}
    \caption{(a) A $(2,0,2,1,3)$-block diagonally symmetric lozenge tiling of $H(5,8,8)$. Due to the symmetry, the tiling is uniquely determined by the lozenges inside the marked subregion. (b) The corresponding lozenge tiling of the region $T(5,8;P)$ where $P=\{1,3,5,7,8,10,12,13\}$. From right to left in each column, there are $3$, $3$, $3$, $2$, $3$, $2$, $4$, and $3$ (shaded) lozenges that are weighted by $x_1,x_2,\dots,x_8$, respectively. All the other lozenges are weighted by $1$.}\label{fig:blockTrap}
\end{figure}

Now, we describe how to assign a weight to lozenges contained in a lozenge tiling of $T(n,m;P)$. Lozenges are categorized into three types based on the orientation of their long diagonals: \textit{horizontal} lozenges, \textit{positive} lozenges, and \textit{negative} lozenges, according to the slope of their long diagonals. Among them, we assign a weight of $1$ to horizontal and positive lozenges. For negative lozenges, we assign weights $x_{k+1}$ to them, where the distance between the right side of such a lozenge and the right boundary of the trapezoid $T(n,m)$ is $\frac{\sqrt{3}}{2}k$; see Figure \ref{fig:blockTrap2}. Under this general weight assignment on lozenges, we write $\M_{\x}(T(n,m;P))$ to denote the tiling generating function of $T(n,m;P)$. 

Based on the above discussion, we define the generating function of $\r$-block diagonally symmetric lozenge tilings of $H(n,m,m)$ as
\begin{equation}\label{eq.xtrapezoidsum}
    \M_{\x}^{\r}\left( H(n,m,m) \right) := \sum_{P}\M_{\x}(T(n,m;P)),    
\end{equation}
where the sum runs over the sets $P$ described in \eqref{eq.condition}. In this paper, we also consider the \textit{$(q,t)$-weight} of a lozenge tiling of $T(n,m;P)$ by setting $x_k = q^{k-1}t$; this weight assignment is closely related to the volume of a pile of unit cubes. We use $\M_{q,t}^{\r}(H(n,m,m))$ and $\M_{q,t}^{\r}(T(n,m;P))$ to denote the $(q,t)$-generating functions of $\r$-block diagonally symmetric lozenge tilings of $H(n,m,m)$ and lozenge tilings of $T(n,m;P)$, respectively.
\begin{remark}
    The sum in \eqref{eq.xtrapezoidsum} was previously studied when it runs over different sets $P$.
    \begin{itemize}
        \item When the sum contains only one term and the set $P$ contains $m$ fixed elements: \eqref{eq.xtrapezoidsum} equals a certain Schur polynomial (see \eqref{eq.AFschur}). In particular, specializing all the weights $x_i=1$ yields the number of lozenge tilings of a trapezoidal region with $m$ unit triangles labeled by $P$ deleted, which was enumerated by Cohn, Larsen, and Propp \cite{CLP}.
         
        \item When the sum runs over all the $m$-subsets $P$ of $[m+n]$: \eqref{eq.xtrapezoidsum} corresponds to a sum of certain Schur polynomials. Equations \eqref{eq.MacMsscq1} and \eqref{eq.MacMssc} are specializations of such a sum: the latter corresponds to the number of symmetric plane partitions $|\{ \pi \in PP^n(m \times m )~|~ \mathsf{tr}(\pi) = \pi \}|$.
    \end{itemize}
    One can view the sum in \eqref{eq.xtrapezoidsum} as an interpolation of the two previously studied cases.
\end{remark}
Our first main theorem provides a simple product formula for the $(q,t)$-generating function of $\r$-block diagonally symmetric lozenge tilings of $H(n,m,m)$. 
\begin{theorem}\label{thm:1}
    For positive integers $m$ and $n$, let $\r=(r_1,\ldots,r_n)$ be an $n$-tuple of non-negative integers such that $r_1+\cdots+r_n=m$. The $(q,t)$-generating function of $\r$-block diagonally symmetric lozenge tilings of $H(n,m,m)$ is given by the following formula:
    \begin{equation}\label{eq:thm1.1}
        \M_{q,t}^{\r} (H(n,m,m)) = \prod_{i=1}^{m}(1+q^{i-1}t)\cdot q^{\alpha}\cdot t^{\beta}\cdot\frac{\prod_{1\leq i<j\leq n}[S_{j}-S_{i}+j-i]_{q}\cdot\prod_{i=1}^{n}[S_{n}+i-1]_{q}!}{\prod_{i=1}^{n}[S_{i}+i-1]_{q}![S_{n}-S_{i}+n-i]_{q}!},
    \end{equation}
    where $S_k = \sum_{i=1}^{k}r_i$, $\alpha=\sum_{i=1}^{n}\binom{S_{n}-S_{i}}{2}$, and $\beta=\sum_{i=1}^{n}(S_{n}-S_{i})$.
\end{theorem}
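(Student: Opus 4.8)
The plan is to follow the first method announced in the introduction: translate $\r$-block diagonally symmetric lozenge tilings of $H(m,m,n)$ into families of non-intersecting lattice paths, account for the block-symmetry constraint by a suitable modification of the path endpoints, and then evaluate the resulting determinant. Concretely, a lozenge tiling of the trapezoid $T(n,m;P)$ corresponds (via the standard David--Tomei/Lindström--Gessel--Viennot dictionary) to a family of $n$ non-intersecting lattice paths, where the $i$-th path starts at a fixed point on the slanted side of $T(n,m)$ and ends at the position determined by the $i$-th element of the complement of $P$ on the right boundary; the $(q,t)$-weight $x_k = q^{k-1}t$ assigned to negative lozenges becomes a product weight on the horizontal steps of the paths. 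Since the sum \eqref{eq.xtrapezoidsum} runs over all admissible $P$ subject to \eqref{eq.condition}, we must sum the LGV determinant over all such $P$; the key observation is that the constraint \eqref{eq.condition} partitions the $m+n$ boundary positions into $n$ consecutive blocks (the $i$-th block of size $r_i+1$ containing exactly $r_i$ chosen positions, equivalently exactly one path-endpoint), so after summing the endpoint of the $i$-th path ranges freely over its own block of size $r_i+1$.

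**Next I would** set up the determinant. By LGV, $\M_\x(T(n,m;P)) = \det\bigl( M_{i, e_j} \bigr)_{i,j=1}^n$ where $e_1 < \cdots < e_n$ are the free positions (endpoints) and $M_{i,e}$ is the weighted count of single paths from the $i$-th source to position $e$; each such entry is a $q$-binomial coefficient times a monomial in $t$, coming from the generating function for lattice paths with weighted horizontal steps. Summing over $P$ means summing the determinant over all choices $e_i \in [S_i+i]\setminus[S_{i-1}+i-1]$ independently; by multilinearity of the determinant in its columns, this equals $\det\bigl( \sum_{e \in \text{block }i} M_{i,e} \bigr)$ — wait, one must be careful because the blocks are indexed by the path index $i$ but the determinant's columns are indexed by endpoint position; since the blocks are disjoint consecutive intervals and endpoints within block $i$ get assigned to path... actually the cleaner route is: the admissible configurations are exactly those where endpoint $e_i$ lies in block $i$, so the sum factors as a single determinant $\det\bigl( N_{i,k} \bigr)$ where $N_{i,k} = \sum_{e \in \text{block }k} M_{i,e}$ is a "block-summed" single-path generating function. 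Each entry $N_{i,k}$ should again simplify, via the $q$-Vandermonde / Chu--Vandermonde summation for $q$-binomials, to a single $q$-binomial coefficient (times a power of $q$ and $t$), making the final matrix amenable to evaluation.

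**The main obstacle** I anticipate is the explicit evaluation of this $n \times n$ determinant with $q$-binomial entries and the bookkeeping of the prefactors $q^\alpha t^\beta$ and $\prod_{i=1}^m (1+q^{i-1}t)$. The determinant has the shape of a $q$-analogue of a Cauchy-type or Krattenthaler-type determinant; I would pull out row and column factors (powers of $q$, factorials $[S_i+i-1]_q!$, $[S_n-S_i+n-i]_q!$) to reduce it to a Vandermonde-like determinant $\det\bigl( \text{something}^{S_j - j} \bigr)$ or $\prod_{i<j}([S_j-S_i+j-i]_q - \cdots)$, invoking Krattenthaler's determinant formulas (the "complementary" or $q$-Vandermonde evaluation in \cite{Kra99}-style references). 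The factor $\prod_{i=1}^m(1+q^{i-1}t)$ presumably emerges because the variable $t$ appears linearly in each entry and the determinant expansion in $t$ telescopes into this product — tracking this and matching the powers $\alpha = \sum_i \binom{S_n - S_i}{2}$ and $\beta = \sum_i (S_n - S_i)$ will require careful comparison of leading/trailing monomials.

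**Finally I would** verify the formula against the known special cases to catch sign or indexing errors: setting $\r = (1,1,\dots,1)$ should recover $2^{n(n+1)/2}$ in the $q,t \to 1$ limit (the off-diagonal symmetry class, matching the experimental observation and Corollary \ref{cor:1}); summing appropriately or taking $n$ with a single nonzero $r_i$ should degenerate to the Cohn--Larsen--Propp count \cite{CLP}; and the ``all $P$'' case should reduce \eqref{eq:thm1.1} to MacMahon's symmetric-plane-partition generating function \eqref{eq.MacMsscq1}. These checks also pin down the correct normalization of the $t$-weight (whether the prefactor is $\prod (1+q^{i-1}t)$ or a shifted version) and confirm the identification of $\alpha,\beta$.
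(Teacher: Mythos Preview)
Your overall strategy matches the paper's first proof almost exactly: translate to non-intersecting lattice paths, absorb the sum over admissible $P$ into the determinant by multilinearity (the paper phrases this as appending a new sink vertex $w_k$ to each block and routing all $r_k+1$ endpoints into it, which is precisely your ``block-summed'' column), then evaluate an $n\times n$ determinant of $q$-binomial type via a Krattenthaler formula.

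There is, however, a genuine technical gap in the middle. Your block-summed entry
\[
N_{i,k}=\sum_{e\,\in\,\text{block }k} M_{i,e}
\;=\;\sum_{\ell\,=\,m+i-S_k-k}^{\,m+i-S_{k-1}-k} q^{\binom{\ell}{2}}\,t^{\ell}\begin{bmatrix} m\\ \ell\end{bmatrix}_q
\]
is a \emph{partial} sum of the $q$-binomial theorem expansion of $\prod_{i=1}^m(1+q^{i-1}t)$; it does \emph{not} collapse to a single $q$-binomial by any Chu--Vandermonde identity, and the resulting determinant is not directly of Krattenthaler type. The paper's first proof handles this by a telescoping sequence of row operations (subtracting consecutive rows, so each sum becomes a difference of two terms) followed by column operations (adding later columns into earlier ones), after which the first column vanishes below the $(1,1)$-entry. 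That $(1,1)$-entry is then the \emph{full} sum $\sum_{\ell=0}^{m} q^{\binom{\ell}{2}}t^\ell\begin{bmatrix} m\\ \ell\end{bmatrix}_q=\prod_{i=1}^m(1+q^{i-1}t)$, which is exactly where this factor comes from --- not from a $t$-linearity/telescoping argument as you conjecture. The remaining $(n-1)\times(n-1)$ minor has genuinely single-term $q$-binomial entries and is then evaluated by Krattenthaler's determinant lemma (\cite[Eq.~(3.12)]{Krattenthaler} in the paper), producing $q^\alpha t^\beta$ and the $q$-factorial ratio.

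So: right architecture, but the step ``$N_{i,k}$ simplifies via $q$-Vandermonde'' fails, and with it your guessed mechanism for $\prod(1+q^{i-1}t)$. Replace that step with the row/column telescoping trick and the rest of your outline goes through.
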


As a direct consequence of Theorem \ref{thm:1}, the number of $\mathbf{r}$-block diagonally symmetric lozenge tilings of $H(n,m,m)$ is obtained by taking $t=1$ and $q\rightarrow1$ in \eqref{eq:thm1.1}.
\begin{corollary}\label{cor:1}
    Let $m,n,\r$, and $S_k$ be the same as in Theorem \ref{thm:1}. The number of $\mathbf{r}$-block diagonally symmetric lozenge tilings of $H(n,m,m)$ is given by the following product formula:
    \begin{equation}\label{eq:cor1}
       \M^{\r} (H(n,m,m)) = 2^{m}\cdot\frac{\prod_{1\leq i<j\leq n}(S_{j}-S_{i}+j-i)\cdot\prod_{i=1}^{n}(S_{n}+i-1)!}{\prod_{i=1}^{n}(S_{i}+i-1)!(S_{n}-S_{i}+n-i)!}.
    \end{equation}
\end{corollary}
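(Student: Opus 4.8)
The plan is to derive \eqref{eq:cor1} directly from Theorem \ref{thm:1} by the specialization $t=1$ followed by the limit $q \to 1$. First I would set $t=1$ in \eqref{eq:thm1.1}; the prefactor $\prod_{i=1}^{m}(1+q^{i-1}t)$ becomes $\prod_{i=1}^{m}(1+q^{i-1})$, the factor $t^{\beta}$ disappears, and the remaining expression is a product of $q$-integers, $q$-factorials, and a power of $q$. Since both sides of \eqref{eq:thm1.1} are (after clearing denominators) polynomials in $q$ that agree for infinitely many values, the limit $q \to 1$ is legitimate and simply amounts to evaluating each $q$-analog at $q=1$. Under this limit $[k]_q \to k$, hence $[k]_q! \to k!$, and $q^{\alpha} \to 1$; thus the big fraction in \eqref{eq:thm1.1} tends to
\[
\frac{\prod_{1\leq i<j\leq n}(S_{j}-S_{i}+j-i)\cdot\prod_{i=1}^{n}(S_{n}+i-1)!}{\prod_{i=1}^{n}(S_{i}+i-1)!\,(S_{n}-S_{i}+n-i)!},
\]
which is exactly the fraction appearing in \eqref{eq:cor1}.

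It remains only to handle the prefactor $\prod_{i=1}^{m}(1+q^{i-1})$ as $q\to 1$: each of the $m$ factors tends to $1+1 = 2$, so the whole product tends to $2^{m}$. Combining this with the limit of the fraction computed above yields precisely the right-hand side of \eqref{eq:cor1}, completing the proof. One should also note that setting $t=1$ in the generating function $\M_{q,t}^{\r}(H(m,m,n))$ and then letting $q\to 1$ counts each $\r$-block diagonally symmetric lozenge tiling with weight $1$, so the left-hand side genuinely becomes $\M^{\r}(H(m,m,n))$, matching the left-hand side of \eqref{eq:cor1}.

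There is essentially no obstacle here: the only mild point to verify is that the individual $q$-factorials and $q$-integers in the numerator and denominator all have non-negative arguments (so that the $q\to1$ limits are the ordinary factorials and integers without spurious zeros or poles), which follows from $0 \le S_1 \le \cdots \le S_n = m$ and the definition of the $q$-binomial coefficient being $0$ outside the valid range — but in the product formula \eqref{eq:thm1.1} every argument $S_i + i - 1$, $S_n - S_i + n - i$, and $S_j - S_i + j - i$ (for $i<j$) is manifestly non-negative, so the limit is taken termwise without incident.
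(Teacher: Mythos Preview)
Your proposal is correct and follows exactly the approach indicated in the paper: the corollary is stated there simply as the specialization $t=1$, $q\to 1$ of Theorem~\ref{thm:1}, and your write-up carries out precisely this limit termwise. The additional checks you include (that all $q$-factorial arguments are non-negative, and that the specialization of the generating function really counts tilings with weight $1$) are sensible but not required beyond what the paper already implicitly assumes.
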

Remarkably, if we set $r_{1}=\cdots=r_{n}=1$, then $S_{i}=i$ for each $i=1,\dots,n$ and \eqref{eq:cor1} simplifies to
\begin{equation*}
    2^{n}\cdot\frac{\prod_{1\leq i<j\leq n}(2j-2i)\cdot\prod_{i=1}^{n}(n+i-1)!}{\prod_{i=1}^{n}(2i-1)!(2n-2i)!}=2^{n}\cdot2^{(n-1)n/2}=2^{n(n+1)/2}.
\end{equation*}
This proves that the number of off-diagonally symmetric lozenge tilings of a regular hexagon $H(n,n,n)$ is equinumerous to the number of domino tilings of $AD(n)$, which is mentioned earlier in Section \ref{sec:off-diag}.

On the other hand, if we set $r_1=\cdots=r_n=2$, then $S_i=2i$ for $i=1,\ldots,n$ and \eqref{eq:cor1} becomes
\begin{equation*}
    2^{2n}\cdot\frac{\prod_{1\leq i<j\leq n}(3j-3i)\cdot\prod_{i=1}^{n}(2n+i-1)!}{\prod_{i=1}^{n}(3i-1)!(3n-3i)!}=2^{2n}\cdot3^{\frac{n(n-1)}{2}}\cdot\frac{\prod_{i=1}^{n}(i-1)!(2n+i-1)!}{\prod_{i=1}^{n}(3i-1)!(3n-3i)!}=2^{2n}\cdot3^{\frac{n(n-1)}{2}}\cdot\prod_{k=0}^{n-1}\frac{(3k+1)!}{(n+k)!}.
\end{equation*}
Remarkably, $\prod_{k=0}^{n-1}\frac{(3k+1)!}{(n+k)!}$ is the number of alternating sign matrices (ASMs) of size $n$; see Remark \ref{rmk.asm} for a detailed discussion. It would be interesting to see if \eqref{eq:cor1} specializes to counting formulas (up to simple factors) for other combinatorial objects with different choices of $r_1,\ldots,r_n$; we leave this direction to be pursued by the interested reader.

\subsection{$\r$-block diagonally symmetric plane partitions}\label{sec:rblockpp}

We can translate the definition of $\mathbf{r}$-block diagonally symmetric lozenge tilings (in Definition \ref{def.rblock}) into certain symmetric plane partitions $\pi \in PP^n(m \times m)$ by imposing additional constraints on the diagonal entries $\pi_{i,i}$ for $1\leq i\leq m$. Let $\r=(r_1,\ldots,r_n)$ be an $n$-tuple of non-negative integers such that $r_1+\cdots+r_n=m$.
\begin{definition}\label{def.blocksympp}
    A plane partition $\pi \in PP^n(m \times m)$ is \textit{$\r$-block diagonally symmetric} if 
    \begin{itemize}
        \item $\mathsf{tr}(\pi) = \pi$ and 
        \item $\pi_{i,i} \in \{k-1,k\}$ if $i \in [S_n-S_{k-1}] \setminus [S_n-S_k]$, for each $k=1,\dots,n$.
    \end{itemize}   
\end{definition}
For example, the left picture in Figure \ref{fig:SPPcorrespondence} is a $(2,0,2,1,3)$-block diagonally symmetric plane partition in $PP^5(8 \times 8)$. One can easily check that it is invariant under taking the transpose. Furthermore, $S_1=2$, $S_2=2$, $S_3=4$, $S_4=5$, and $S_5=8$ in this case, and one can confirm that the diagonal entries satisfy the second condition in Definition \ref{def.blocksympp}:
\begin{itemize}
    \item $\pi_{i,i}\in\{0,1\}$ for $i\in [8-0]\setminus[8-2]=\{7,8\}$, 
    \item $\pi_{i,i}\in\{1,2\}$ for $i\in [8-2]\setminus[8-2]=\varnothing$, 
    \item $\pi_{i,i}\in\{2,3\}$ for $i\in [8-2]\setminus[8-4]=\{5,6\}$,
    \item $\pi_{i,i}\in\{3,4\}$ for $i\in [8-4]\setminus[8-5]=\{4\}$, and
    \item $\pi_{i,i}\in\{4,5\}$ for $i\in [8-5]\setminus[8-8]=\{1,2,3\}$.
\end{itemize}

\begin{figure}[hbt!]
    \centering 
    \includegraphics[width=0.55\textwidth]{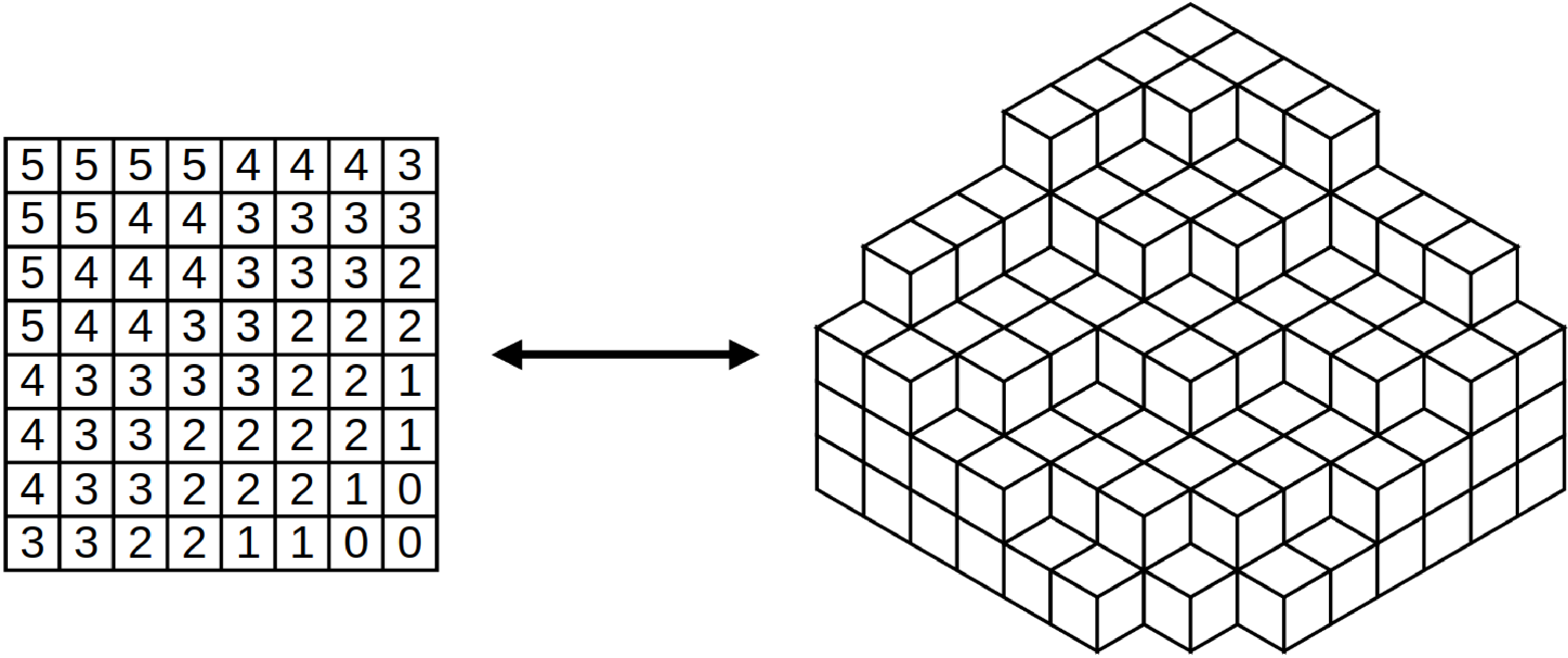}
    \caption{A $(2,0,2,1,3)$-block diagonally symmetric plane partition in $PP^5(8 \times 8)$ and its 3D realization as a pile of unit cubes. Since there are $23$ unit cubes on the main diagonal and $187$ unit cubes in total, this plane partition is weighted by $q^{187-23}t^{23}=q^{164}t^{23}$.}
    \label{fig:SPPcorrespondence}
\end{figure}

Let $PP^n_{\r}(m \times m)$ be the set of $\r$-block diagonally symmetric plane partitions in $PP^n(m \times m)$. See Figure \ref{fig:SPPcorrespondence} for an example. Via the bijection of David and Tomei \cite{DT} mentioned in Section \ref{sec:intro}, it is not hard to see that $PP^n_{\r}(m \times m)$ is in bijection with the set of $\r$-block diagonally symmetric lozenge tilings of $H(n,m,m)$; the second condition in Definition \ref{def.blocksympp} comes from the condition that the $r_i$-cell contains exactly $r_i$ full horizontal lozenges in Definition \ref{def.rblock}. For each $\r$-block diagonally symmetric plane partition $\pi \in PP^n_{\r}(m \times m)$, we associate a weight $q^{|\pi|_n}t^{|\pi|_d}$ to it, where $|\pi|_d=\sum_{i}\pi_{i,i}$ is the sum of the diagonal entries of $\pi$ and $|\pi|_n=\sum_{i\neq j}\pi_{i,j}$ is the sum of non-diagonal entries of $\pi$. 

\begin{figure}[hbt!]
    \centering
    \subfigure[]{\label{fig:hook1}\includegraphics[width=0.23\textwidth]{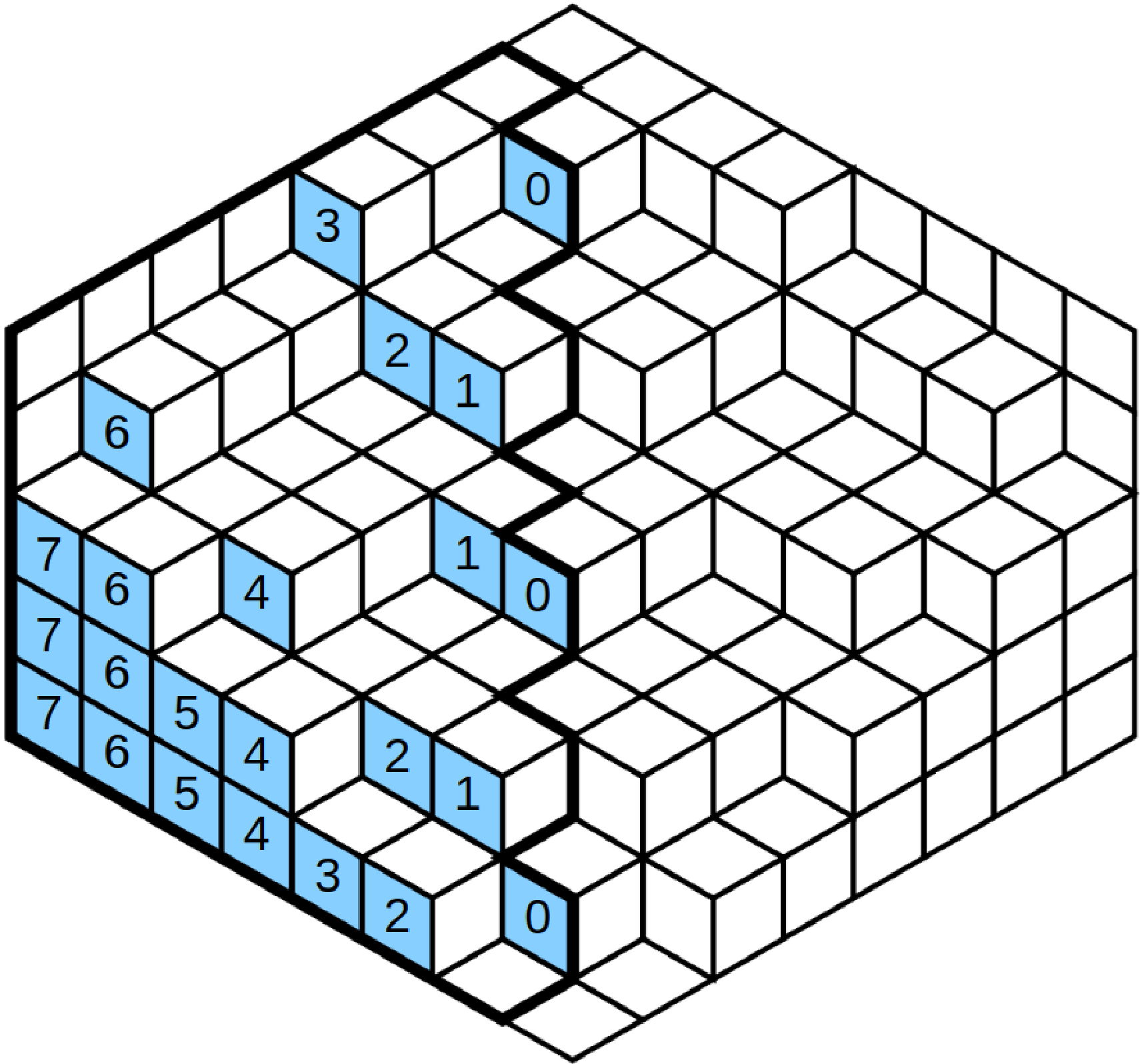}}
    \hspace{10mm}
    \subfigure[]{\label{fig:hook2}\includegraphics[width=0.23\textwidth]{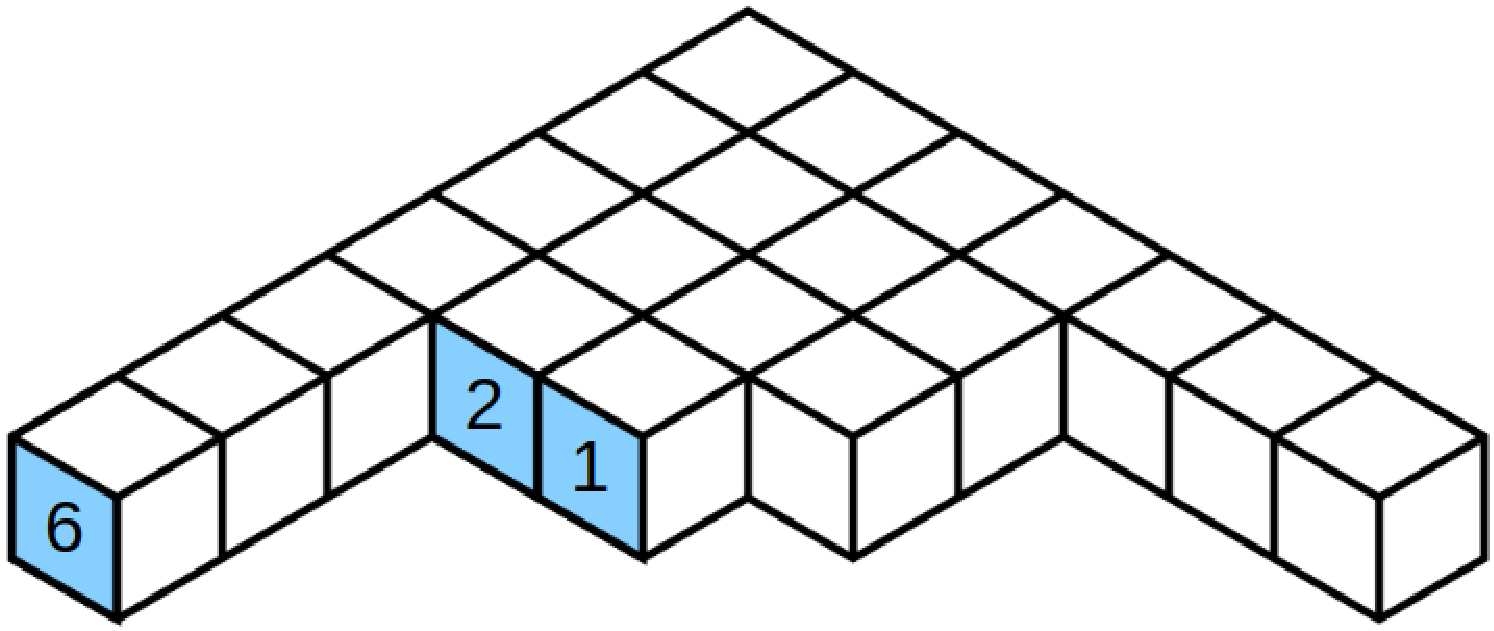}}
    \hspace{10mm}
    \subfigure[]{\label{fig:hook3}\includegraphics[width=0.23\textwidth]{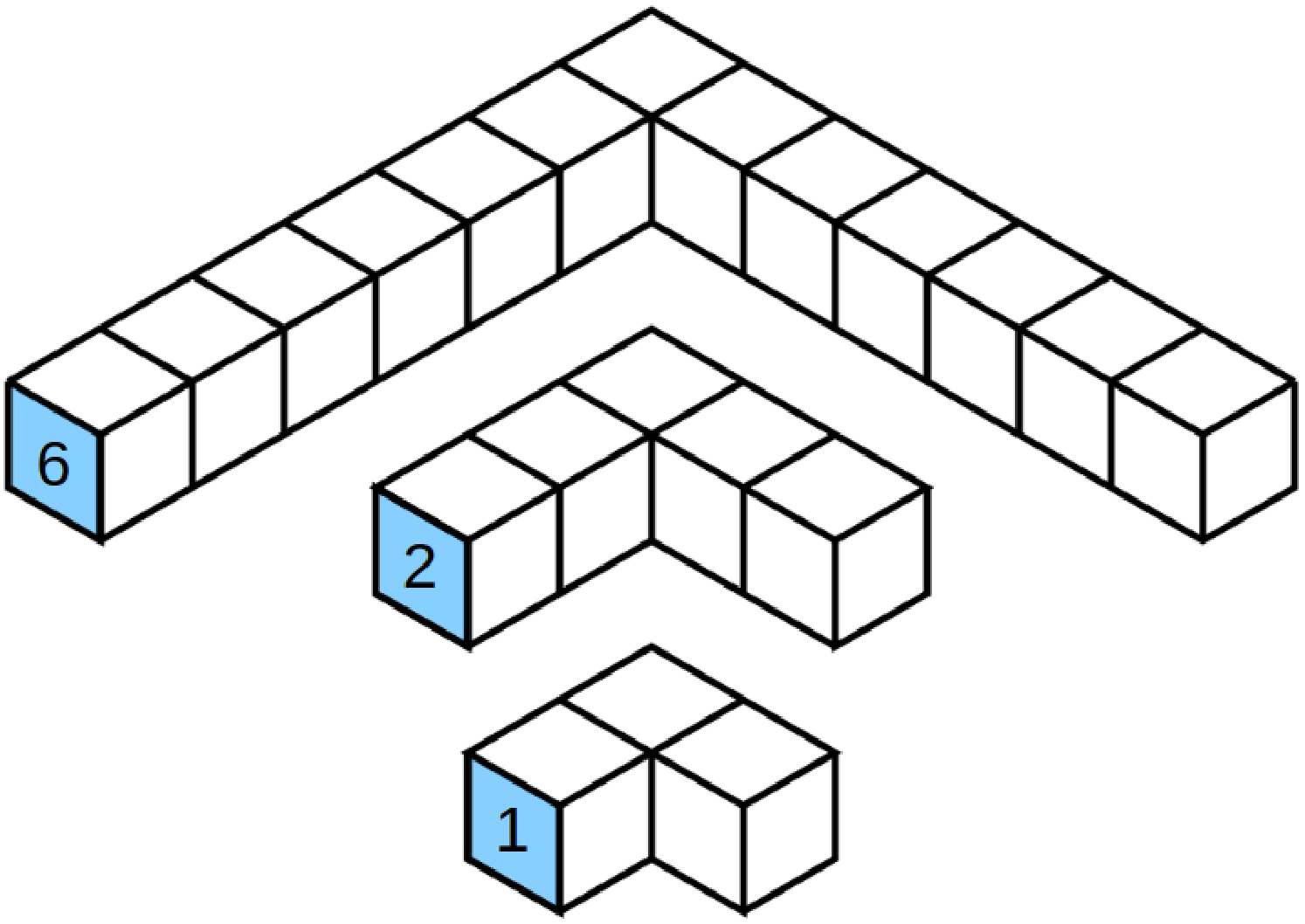}}
    \caption{(a) A $(2,0,2,1,3)$-block diagonally symmetric lozenge tiling of $H(5,8,8)$ in Figure \ref{fig:blockTrap1}. A shaded lozenge labeled $k$ is weighted by $q^kt$. (b) A collection of unit cubes in the fourth level in Figure \ref{fig:hook1}, when we view it as a (symmetric) plane partition. (c) A decomposition of unit cubes in Figure \ref{fig:hook2} into three symmetric hooks.}
    \label{fig:hook}
\end{figure}

Now, we explain how the weight of $\r$-block diagonally symmetric plane partitions behaves under David and Tomei's bijection. To do that, we view such a symmetric lozenge tiling as a pile of unit cubes and decompose it as follows (see Figure \ref{fig:hook}). We first partition these unit cubes according to their level and then decompose the collection of unit cubes in the same level into symmetric hooks. Figure \ref{fig:hook2} shows all the unit cubes in the fourth level in Figure \ref{fig:hook1}; their symmetric hook decomposition is given in Figure \ref{fig:hook3}.

One can see that the leftmost unit cube in each symmetric hook corresponds to a nontrivially weighted lozenge, and our weight assignment carries the following geometric information: if this lozenge is weighted by $q^kt$, then the corresponding symmetric hook consists of $(2k+1)$ unit cubes (see Figure \ref{fig:hook3} again). In this symmetric hook, we can interpret the exponent of $q^k$ as half of the number of unit cubes in this symmetric hook that are not on the diagonal, and that of $t=t^1$ as the unit cube on the diagonal. 

Thus, the $(q,t)$-generating function of $\r$-block diagonally symmetric lozenge tilings of $H(n,m,m)$ (as shown in Theorem \ref{thm:1}) gives the following generating function of $\r$-block diagonally symmetric plane partitions $PP^n_{\r}(m\times m)$:
\begin{equation}\label{eq:relation}
    \sum_{\pi \in PP^n_{\r}(m \times m)} q^{\frac{1}{2}|\pi|_n}t^{|\pi|_d}.
\end{equation}
Therefore, replacing $q$ by $q^2$ in \eqref{eq:thm1.1}, we derive the following $(q,t)$-generating function of $\r$-block diagonally symmetric plane partitions $PP^n_{\r}(m \times m)$. 
\begin{theorem}\label{thm:1.5}
    The $(q,t)$-generating function of $\r$-block diagonally symmetric plane partitions $PP^n_{\r}(m \times m)$ is given by
    \begin{equation}\label{eq:cor2}
        \sum_{\pi \in PP^n_{\r}(m \times m)} q^{|\pi|_n}t^{|\pi|_d}=\prod_{i=1}^{m}(1+q^{2i-2}t)\cdot q^{2\alpha}\cdot t^{\beta}\cdot\frac{\prod_{1\leq i<j\leq n}[S_{j}-S_{i}+j-i]_{q^2}\cdot\prod_{i=1}^{n}[S_{n}+i-1]_{q^2}!}{\prod_{i=1}^{n}[S_{i}+i-1]_{q^2}![S_{n}-S_{i}+n-i]_{q^2}!},
    \end{equation}
    where $S_k = \sum_{i=1}^{k}r_i$, $\alpha=\sum_{i=1}^{n}\binom{S_{n}-S_{i}}{2}$, and $\beta=\sum_{i=1}^{n}(S_{n}-S_{i})$.
\end{theorem}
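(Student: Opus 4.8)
\textbf{Proof proposal for Theorem \ref{thm:1.5}.} The plan is to reduce Theorem \ref{thm:1.5} directly to Theorem \ref{thm:1} via the David--Tomei bijection between $\r$-block symmetric plane partitions and $\r$-block diagonally symmetric lozenge tilings of $H(m,m,n)$, together with a careful bookkeeping of how the plane partition statistics $|\pi|_n$ and $|\pi|_d$ translate into the lozenge weights $x_k$ used in \eqref{eq.xtrapezoidsum}. Recall from Section \ref{sec:rblockpp} that a plane partition $\pi \in PP^n_{\r}(m \times m)$ corresponds to a symmetric lozenge tiling of $H(m,m,n)$, which in turn is determined by its restriction to the trapezoid $T(n,m;P)$. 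The first step is to show that, under the weight assignment $x_k = q^{2(k-1)}t$ on the negative lozenges of $T(n,m;P)$ (i.e. the ``$(q,t)$-weight'' of Theorem \ref{thm:1} with $q$ replaced by $q^2$), the total weight of the corresponding full symmetric tiling of $H(m,m,n)$ equals $q^{|\pi|_n}t^{|\pi|_d}$ up to the explicit monomial factor that will appear.

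The key observation to establish in the second step is the following: the horizontal lozenges of the full tiling of $H(m,m,n)$ split into those lying on the axis $\ell$ (there are exactly $m$ of them, one inside each cell, by the block-symmetry condition) and those lying strictly off $\ell$, which come in mirror-image pairs. A horizontal lozenge at distance $\frac{\sqrt3}{2}k$ from $\ell$ contributes $\pi_{i,j}$-many unit cubes in the MacMahon correspondence; summing over all horizontal lozenges off the axis (and using that each off-axis lozenge is paired with its mirror image) gives $|\pi|_n$, while summing over the $m$ on-axis lozenges gives $|\pi|_d$. Thus if I weight each off-axis horizontal lozenge in the \emph{left half} by $q^2$ per unit of ``height'' (accounting for both the lozenge and its mirror image) and each on-axis lozenge by $t$, I recover $q^{|\pi|_n}t^{|\pi|_d}$. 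The next step is to convert this ``height weighting on horizontal lozenges'' into the ``$x_k$ weighting on negative lozenges'' of the trapezoid $T(n,m;P)$: this is the standard height-function/path argument, where moving a horizontal lozenge up by one step trades a negative lozenge for a positive one along a column, so that the product of $x_k$'s over negative lozenges in $T(n,m;P)$ equals (a fixed monomial depending only on $P$) times $q^{2 \cdot (\text{off-axis height sum})} t^{(\text{on-axis count-type contribution})}$. One must check that the ``$t$-part'' is handled correctly: the $m$ on-axis cells each contribute a factor depending on whether $\pi_{i,i}=k-1$ or $k$ (the two allowed values from Definition \ref{def.blocksympp}), and this is exactly the source of the $\prod_{i=1}^m(1+q^{2i-2}t)$ factor in \eqref{eq:cor2} versus the $\prod_{i=1}^m(1+q^{i-1}t)$ in \eqref{eq:thm1.1}.

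Once the dictionary is set up, the third step is purely formal: substitute $q \mapsto q^2$ throughout the formula \eqref{eq:thm1.1} of Theorem \ref{thm:1}, so that $[k]_q \mapsto [k]_{q^2}$, $q^\alpha \mapsto q^{2\alpha}$, and $\prod_i (1+q^{i-1}t) \mapsto \prod_i(1+q^{2i-2}t)$, while the $t^\beta$ factor is unchanged because $\beta$ counts negative lozenges linearly and each such lozenge carries a single factor of $t$ regardless of the $q$-substitution. Matching this with \eqref{eq:cor2} then completes the proof.

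The main obstacle I expect is the precise determination of the normalizing monomial and, in particular, verifying that the exponent of $t$ remains $\beta=\sum_{i=1}^n(S_n-S_i)$ rather than something rescaled: one has to argue that while the $q$-statistic on a pile of cubes is genuinely ``quadratic'' in the layer index (hence the $q^2$), the $t$-statistic $|\pi|_d$ counts diagonal cubes with multiplicity one each, matching the linear count of negative lozenges weighted by $t$ in the trapezoid picture. A clean way to sidestep delicate casework is to observe that both sides of \eqref{eq:cor2}, viewed as polynomials in $t$, have the same degree $m$ and the same specialization at every root of the factor $\prod(1+q^{2i-2}t)$; alternatively, one can set $t=0$ (forcing all diagonal entries to their minimal allowed values, which reduces to a genuine MacMahon-type trapezoid enumeration with parameter $q^2$) and then track the $t$-dependence separately through the $m$ independent binary choices on the axis. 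Either route reduces the theorem to Theorem \ref{thm:1} with the substitution $q \mapsto q^2$, with no new determinant or Pieri-rule computation required.
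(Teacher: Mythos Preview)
Your overall strategy coincides with the paper's: exhibit a weight-preserving bijection between $PP^n_{\r}(m\times m)$ and the $\r$-block diagonally symmetric tilings counted in Theorem~\ref{thm:1}, observe that the plane-partition statistic corresponds to the trapezoid weight with $q$ replaced by $q^2$, and then literally substitute $q\mapsto q^2$ in \eqref{eq:thm1.1}. The paper's proof is exactly this, and the final substitution step you describe is identical to theirs.

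Where you diverge is in the bookkeeping of the weight correspondence, and here your proposal is imprecise in a way that would cost you real effort. You route the argument through \emph{horizontal} lozenges (on-axis versus off-axis) and then propose a second conversion from ``height weighting on horizontal lozenges'' to ``$x_k$ weighting on negative lozenges'' via a vague ``standard height-function/path argument''. Your sentence ``a horizontal lozenge at distance $\frac{\sqrt3}{2}k$ from $\ell$ contributes $\pi_{i,j}$-many unit cubes'' conflates the horizontal distance $|i-j|$ with the column height $\pi_{i,j}$; as written it is not correct, and untangling it would require exactly the kind of casework you hoped to avoid. The paper bypasses all of this with a single clean observation: decompose the pile of cubes level by level into \emph{symmetric hooks}, and note that each negative lozenge in $T(n,m;P)$ weighted $q^{k}t$ corresponds bijectively to one symmetric hook containing $2k+1$ cubes ($2k$ off-diagonal, $1$ diagonal). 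This immediately gives $\M_{q,t}^{\r}(H(m,m,n))=\sum_\pi q^{|\pi|_n/2}t^{|\pi|_d}$, whence the $q\mapsto q^2$ substitution finishes. No horizontal-lozenge detour, no normalizing monomial to chase, and no need for your fallback arguments (specializing $t=0$, or matching at roots of $\prod(1+q^{2i-2}t)$), which in any case would not determine the full two-variable polynomial identity.
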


If we set $t=q$ in \eqref{eq:cor2}, the weight of each plane partition $\pi$ becomes $q^{|\pi|_n}t^{|\pi|_d}=q^{|\pi|_n+|\pi|_d}=q^{|\pi|}$, which coincides with the volume of $\pi$. Thus, under this specialization, we obtain the volume generating function of $\r$-block diagonally symmetric plane partitions $PP^n_{\r}(m \times m)$.
\begin{corollary}\label{cor:3}
    The volume generating function of $\r$-block diagonally symmetric plane partitions $PP^n_{\r}(m \times m)$ is given by
    \begin{equation}\label{eq:cor3}
        \sum_{\pi \in PP^n_{\r}(m \times m)} q^{|\pi|} = \prod_{i=1}^{m}(1+q^{2i-1})\cdot q^{2\alpha+\beta}\cdot\frac{\prod_{1\leq i<j\leq n}[S_{j}-S_{i}+j-i]_{q^2}\cdot\prod_{i=1}^{n}[S_{n}+i-1]_{q^2}!}{\prod_{i=1}^{n}[S_{i}+i-1]_{q^2}![S_{n}-S_{i}+n-i]_{q^2}!},
    \end{equation}
    where $S_k = \sum_{i=1}^{k}r_i$, $\alpha=\sum_{i=1}^{n}\binom{S_{n}-S_{i}}{2}$, and $\beta=\sum_{i=1}^{n}(S_{n}-S_{i})$.
\end{corollary}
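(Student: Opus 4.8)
The plan is to obtain Corollary \ref{cor:3} as an immediate specialization of Theorem \ref{thm:1.5}, following the paragraph that precedes the statement. The one substantive observation is a bookkeeping identity: for any $\pi \in PP^n_{\r}(m\times m)$, each entry $\pi_{i,j}$ is counted either in $|\pi|_d = \sum_i \pi_{i,i}$ (if $i=j$) or in $|\pi|_n = \sum_{i\neq j}\pi_{i,j}$ (if $i\neq j$), and these two contributions partition all entries of $\pi$; hence $|\pi| = |\pi|_d + |\pi|_n$. Therefore setting $t=q$ turns the weight $q^{|\pi|_n}t^{|\pi|_d}$ into $q^{|\pi|_n+|\pi|_d}=q^{|\pi|}$, and the left-hand side of \eqref{eq:cor2} becomes precisely the volume generating function $\sum_{\pi\in PP^n_{\r}(m\times m)}q^{|\pi|}$ appearing in \eqref{eq:cor3}.

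It then remains to put $t=q$ on the right-hand side of \eqref{eq:cor2} and simplify the elementary prefactors. The product $\prod_{i=1}^{m}(1+q^{2i-2}t)$ becomes $\prod_{i=1}^{m}(1+q^{2i-1})$; the monomial $q^{2\alpha}t^{\beta}$ becomes $q^{2\alpha+\beta}$; and the quotient of $q^2$-factorials, which is independent of $t$, is carried over unchanged. Combining these three pieces gives exactly the right-hand side of \eqref{eq:cor3}, which completes the argument.

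There is no real obstacle here, since all the difficulty is already absorbed into Theorem \ref{thm:1} and its translation to plane partitions in Theorem \ref{thm:1.5}; the only point deserving a line of justification is the decomposition $|\pi|=|\pi|_d+|\pi|_n$ that legitimizes the substitution $t=q$, together with the routine collapse of the two elementary factors. (Alternatively, one could re-run the symmetric-hook decomposition used in the proof of Theorem \ref{thm:1.5} while tracking the statistic $q^{|\pi|}$ directly: a symmetric hook of $2k+1$ unit cubes, corresponding to a lozenge weighted $q^{k}t$ in that argument, now contributes $q^{2k+1}=q^{2k}\cdot q$, i.e. exactly the effect of replacing $t$ by $q$ after the substitution $q\mapsto q^2$; this yields the same formula.)
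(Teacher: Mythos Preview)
Your proof is correct and is essentially identical to the paper's own argument: the corollary is obtained by substituting $t=q$ into Theorem~\ref{thm:1.5}, using $|\pi|=|\pi|_d+|\pi|_n$ to identify the left-hand side with the volume generating function and performing the obvious simplifications $(1+q^{2i-2}t)\to(1+q^{2i-1})$ and $q^{2\alpha}t^{\beta}\to q^{2\alpha+\beta}$ on the right.
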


\subsection{$(\r,\r^{\prime})$-block diagonally symmetric lozenge tilings}\label{sec:rrblock}

We generalize the notion of the $\r$-block diagonal symmetry class. For positive integers $m,n,l$, let $\r=(r_1,\dots,r_n)$ and $\r^{\prime}=(r_1^{\prime},\dots,r_n^{\prime})$ be two $n$-tuples of non-negative integers such that $|\r| = m+l$ and $|\r^{\prime}| = l$. We consider the region obtained by gluing the left and right sides of the hexagon $H(n+l,m,m)$ and embedding it into a cylinder. We denote this identified side by $\ell^{\prime}$. We then place $r_i$-cells on the vertical symmetry axis $\ell$ of $H(n+l,m,m)$ as before, and similarly place $r_i^{\prime}$-cells on $\ell^{\prime}$ such that these cells are non-overlapping and they are bisected by $\ell^{\prime}$. 
\begin{definition}\label{def.rrblock}
    We say a lozenge tiling of $H(n+l,m,m)$ is \textit{$(\r,\r^{\prime})$-block diagonally symmetric} if 
\begin{itemize}
    \item the tiling is $\r$-block diagonally symmetric, and
    \item the $r_i^{\prime}$-cell contains exactly $r_i^{\prime}$ full horizontal lozenges for $i=1,\dots,n$.
\end{itemize}
\end{definition}
See Figure \ref{fig:cylinder1} for an example. We note that if $l = 0$ (i.e. $\r^{\prime} = (0,\dots,0)$), then in an  $(\r,\r^{\prime})$-block diagonally symmetric lozenge tiling, there is no horizontal lozenge cross $\ell^{\prime}$. Thus, it reduces to an $\r$-block diagonally symmetric lozenge tiling.

\begin{figure}[hbt!]
    \centering
    \subfigure[]{\label{fig:cylinder1}\includegraphics[width=0.18\textwidth]{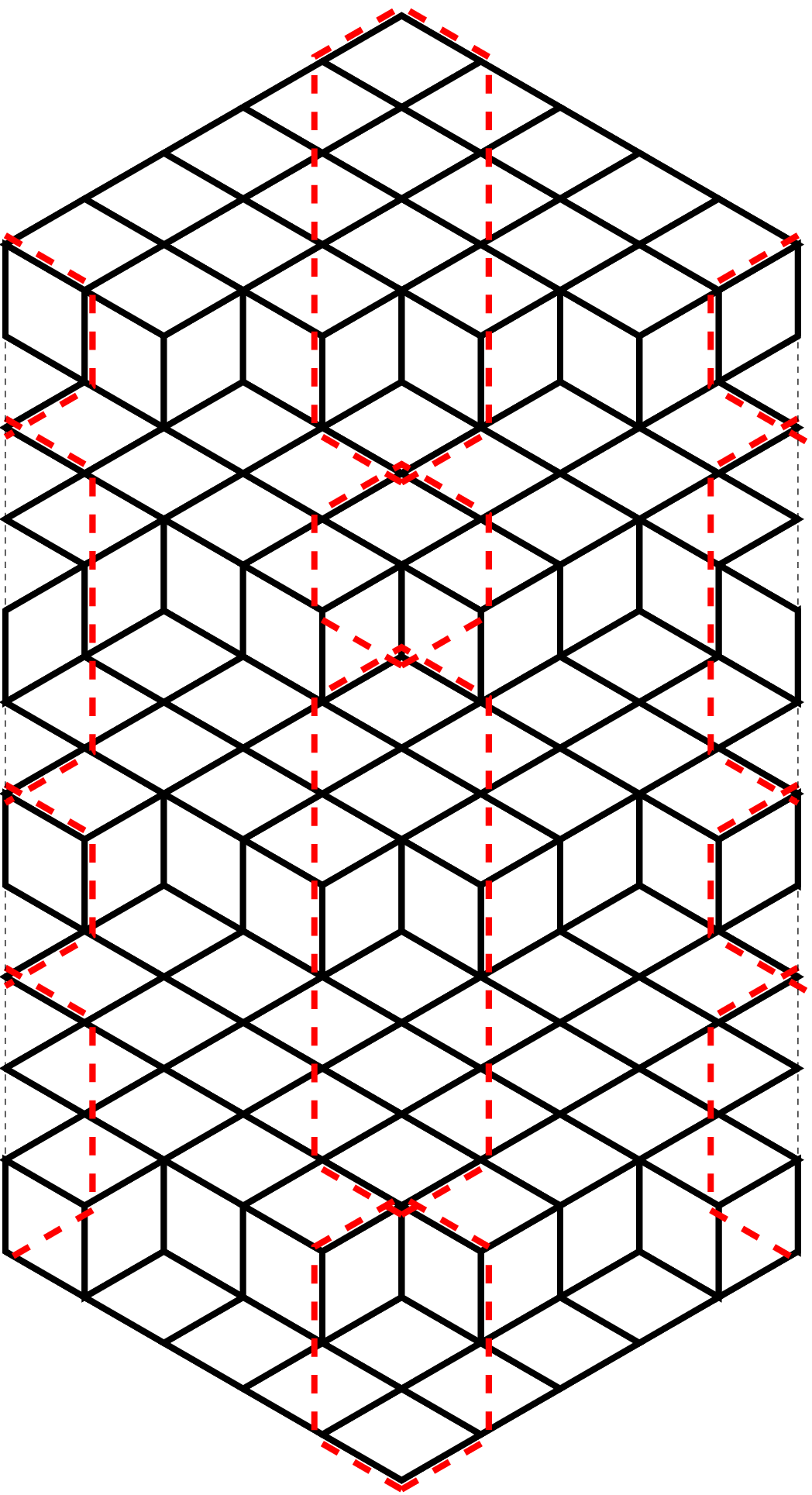}}
    \hspace{20mm}
    \subfigure[]{\label{fig:cylinder2}\includegraphics[width=0.09\textwidth]{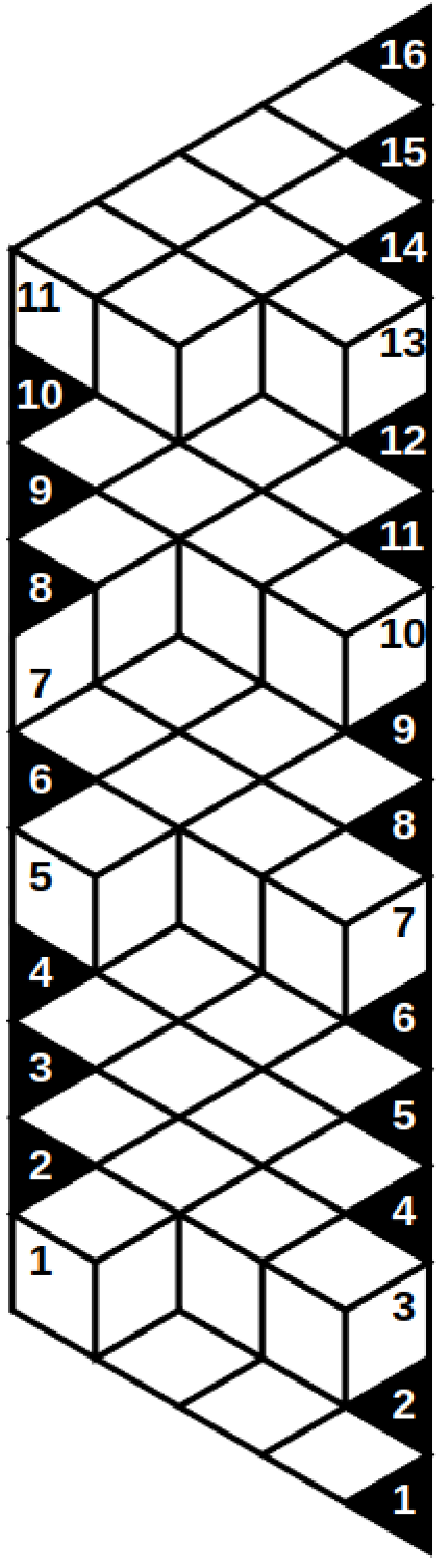}}
    \caption{(a) An $(\r,\r^{\prime})$-block diagonally symmetric lozenge tiling of $H(11,5,5)$ with $\r=(2,5,1,4)$ and $\r^{\prime}=(2,1,3,1)$. (b) The corresponding lozenge tiling of the region $T(11,5;P,P^{\prime})$ with $P=\{1,2,4,5,6,8,9,11,12,14,15,16\}$ and $P^{\prime}=\{2,3,4,6,8,9,10\}$.}
\end{figure}

Moreover, an $(\r,\r^{\prime})$-block diagonally symmetric lozenge tiling of $H(n+l,m,m)$ can be identified with the trapezoidal region $T(n+l,m)$ with dents on the left and right sides, which is described as follows. From bottom to top, we label $n+l$ right-pointing unit triangles on the left side of $T(n+l,m)$ by $1,\ldots,n+l$ and label $m+n+l$ left-pointing unit triangles on the right side of $T(n+l,m)$ by $1,\ldots,m+n+l$. For any set $P \subset[m+n+l]$ with cardinality $m+l$ and $P^{\prime} \subset[n+l]$ with cardinality $l$, we define $T(n+l,m;P,P^{\prime})$ to be the region obtained from $T(n+l,m)$ by deleting the left-pointing unit triangles labeled by the elements of $P$ and the right-pointing unit triangles labeled by the elements of $P^{\prime}$. See Figures \ref{fig:cylinder1} and \ref{fig:cylinder2}.

We use the same weight assignment of lozenges described in Section \ref{sec:rblocklozenge}. The generating function of  $(\r,\r^{\prime})$-block diagonally symmetric lozenge tilings of $H(n+l,m,m)$ is defined as follows, which is analogous to that of $\M^{\r}_{\x}\left( H(n+l,m,m) \right)$ in \eqref{eq.xtrapezoidsum}:
\begin{equation}\label{eq.traptwodentssum}
    \M^{\r,\r^{\prime}}_{\x}\left( H(n+l,m,m) \right) \coloneqq \sum_{P,P^{\prime}} \M_{\x}\left( T(n+l,m;P,P^{\prime}) \right),
\end{equation}
where the sum runs over all sets $P \subset [m+n+l]$ satisfying the condition \eqref{eq.condition} and $P^{\prime} \subset [n+l]$ satisfying the following similar condition: for each $k=1,\ldots,n$,
\begin{equation}\label{eq.condition2}
    |P^{\prime}\cap([S^{\prime}_k+k]\setminus[S^{\prime}_{k-1}+k-1])|=r^{\prime}_k,
\end{equation}
where $S^{\prime}_k = \sum_{i=1}^{k}r^{\prime}_i$ denotes the partial sum of $(r_1^{\prime},\dots,r_n^{\prime})$ for each $k=1,\ldots,n$, with the convention $S^{\prime}_{0}=0$.

Our data indicates that when all variables are set to $1$, $\M^{\r,\r^{\prime}} (H(n+l,m,m))$ generally contains large prime factors, suggesting that a simple product formula is unlikely to exist. Nevertheless, we discover an identity regarding the signed enumeration of the $(\r,\r^{\prime})$-block diagonally symmetric lozenge tilings of $H(n+l,m,m)$. Before stating our second main result, we introduce the following notions.

Define two special sets as
\begin{equation}\label{eq.Rminmax}
    P_{\mathsf{min}} = [m+n+l] \setminus \{S_k+k|k=1,\dots,n\} \text{\quad and\quad} P^{\prime}_{\mathsf{max}} = [n+l] \setminus \{S^{\prime}_{k-1}+k|k=1,\dots,n\}.
\end{equation}
$P_{\mathsf{min}}$ is the set of minimal labels among all the sets $P$ satisfying the condition \eqref{eq.condition} while $P^{\prime}_{\mathsf{max}}$ is the set of maximal labels among all the sets $P^{\prime}$ satisfying the condition \eqref{eq.condition2}. Given any set $P \subset [m+n+l]$ satisfying \eqref{eq.condition} and $P^{\prime} \subset [n+l]$ satisfying \eqref{eq.condition2}, we define
\begin{equation}\label{eq.difference}
    d(P) = \sum_{x \in P} x - \sum_{x \in P_{\mathsf{min}}} x \text{\quad and \quad} d^{\prime}(P^{\prime}) = \sum_{x \in P^{\prime}_{\mathsf{max}}} x - \sum_{x \in P^{\prime}} x.
\end{equation}
These two quantities measure the total difference between the labels in $P$ and $P_{\mathsf{min}}$, as well as the labels in $P^{\prime}$ and $P^{\prime}_{\mathsf{max}}$, respectively.

Finally, the signed enumeration of $(\r,\r^{\prime})$-block diagonally symmetric lozenge tilings of $H(n+l,m,m)$ based on \eqref{eq.traptwodentssum} is defined to be
\begin{equation}\label{eq.trapetwodentssign}
    \M^{\r,\r^{\prime}}_{\x,\sgn}\left( H(n+l,m,m) \right) := \sum_{P,P^{\prime}} (-1)^{d^{\prime}(P^{\prime})}\M_{\x}\left( T(n+l,m;P,P^{\prime}) \right),
\end{equation}
where $P$ and $P^{\prime}$ are subject to the conditions given in \eqref{eq.traptwodentssum}. Now, we are ready to state the second main result.
\begin{theorem}\label{thm:2}
    For positive integers $m,n,l$ with $l \leq n$, let $\r=(r_1,\ldots,r_n)$ and $\r^{\prime} = (r^{\prime}_1,\dots,r^{\prime}_n)$ be two $n$-tuples of non-negative integers such that $|\r|=m+l$ and $|\r^{\prime}| = l$ with $r^{\prime}_i=1$ for $i=1,\dots,l$ and $r^{\prime}_j=0$ for $j=l+1,\dots,n$. Then the signed enumeration of $(\r,\r^{\prime})$-block diagonally symmetric lozenge tilings of $H(n+l,m,m)$ satisfies the following identity:
    \begin{equation}\label{eq.thm2}
         \M^{\r,\r^{\prime}}_{\x,\sgn}\left( H(n+l,m,m) \right) = \prod_{i=1}^{m}(1+x_i) \cdot \M_{\x}(T(n+l,m;P_{\mathsf{min}},P^{\prime}_{\mathsf{max}})),
    \end{equation}
    where $P_{\mathsf{min}}$ and $P^{\prime}_{\mathsf{max}}$ are defined in \eqref{eq.Rminmax}.
\end{theorem}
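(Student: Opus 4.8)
The plan is to run the algebraic method described in the introduction (weighted lozenge tilings as Schur polynomials, plus the dual Pieri rule), now with an inner shape present. First I would extend the dictionary of \cite[Theorem 2.3]{AF} — the one underlying the proof of Theorem~\ref{thm:1} — from singly‑dented to doubly‑dented trapezoids: for partitions $\lambda(P)\supseteq\mu(P')$ read off from the two dent sets,
\begin{equation*}
    \M_{\x}\bigl(T(n+l,m;P,P')\bigr)=s_{\lambda(P)/\mu(P')}(x_1,\dots,x_m),
\end{equation*}
a \emph{skew} Schur polynomial whose outer shape is governed by the right‑side dents $P$ and whose inner shape is governed by the left‑side dents $P'$. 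Next I would translate the admissibility conditions into shape statements. Condition \eqref{eq.condition} puts $\lambda(P)$ into exactly the family of outer shapes that occurs in the proof of Theorem~\ref{thm:1}, with extremal member $\lambda_{\ast}:=\lambda(P_{\mathsf{min}})$; the computation there (dual Pieri, with the skewing operator $s_{\mu}^{\perp}$ carried along) then gives, for every fixed inner shape $\mu$,
\begin{equation*}
    \sum_{P}s_{\lambda(P)/\mu}(x_1,\dots,x_m)=\prod_{i=1}^{m}(1+x_i)\cdot\sum_{\beta:\ \mu/\beta\text{ a vertical strip}}s_{\lambda_{\ast}/\beta}(x_1,\dots,x_m).
\end{equation*}
For the particular staircase $\r'$ in the hypothesis, condition \eqref{eq.condition2} is far more rigid: it forces $\mu(P')$ to run over precisely the $2^{l}$ partitions obtained from the staircase shape $\mu_{\ast}:=\mu(P^{\prime}_{\mathsf{max}})$ by deleting an arbitrary subset of its $l$ removable (anti‑diagonal) corner boxes, with $d^{\prime}(P')$ equal to the number of boxes deleted; equivalently, $\mu(P')$ ranges over $\{\mu:\mu_{\ast}/\mu\text{ is a horizontal strip}\}$ with sign $(-1)^{d^{\prime}(P')}=(-1)^{|\mu_{\ast}/\mu|}$. (The assumption $l\le n$ is what makes $\mu_{\ast}$ fit inside both the ambient box and $\lambda_{\ast}$, so that the right‑hand side of \eqref{eq.thm2} makes sense.)

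Feeding these into \eqref{eq.trapetwodentssign} and interchanging sums twice gives
\begin{equation*}
    \M^{\r,\r'}_{\x,\sgn}\bigl(H(m,m,n+l)\bigr)=\prod_{i=1}^{m}(1+x_i)\sum_{\beta}s_{\lambda_{\ast}/\beta}(x_1,\dots,x_m)\Biggl(\ \sum_{\substack{\mu:\ \mu_{\ast}/\mu\text{ h.s.}\\ \mu/\beta\text{ v.s.}}}(-1)^{|\mu_{\ast}/\mu|}\Biggr),
\end{equation*}
so the theorem reduces to the combinatorial identity that the bracketed alternating sum equals $1$ when $\beta=\mu_{\ast}$ and $0$ otherwise. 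This is the crux. Because $\mu_{\ast}$ is a staircase, the three constraints defining the range of $\mu$ (i.e.\ $\beta\subseteq\mu\subseteq\mu_{\ast}$ with $\mu_{\ast}/\mu$ a horizontal strip and $\mu/\beta$ a vertical strip) decouple row by row over the $l$ corner rows of $\mu_{\ast}$: in each such row, $\mu$ is forced by $\beta$, forbidden by $\beta$, or free, and a free row contributes a factor $\sum_{\varepsilon\in\{0,1\}}(-1)^{\varepsilon}=0$. Using the staircase shape, one checks that a forced row cascades downward to a row that is free or infeasible, so the sum vanishes unless every row is forbidden — which happens exactly when $\beta=\mu_{\ast}$, where the sum is $1$. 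Hence only the term $\beta=\mu_{\ast}$ survives, and by the dictionary of the first paragraph
\begin{equation*}
    \M^{\r,\r'}_{\x,\sgn}\bigl(H(m,m,n+l)\bigr)=\prod_{i=1}^{m}(1+x_i)\cdot s_{\lambda_{\ast}/\mu_{\ast}}(x_1,\dots,x_m)=\prod_{i=1}^{m}(1+x_i)\cdot\M_{\x}\bigl(T(n+l,m;P_{\mathsf{min}},P^{\prime}_{\mathsf{max}})\bigr),
\end{equation*}
which is \eqref{eq.thm2}.

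I expect the main obstacle to be twofold. First, setting up the skew‑Schur dictionary for a trapezoid with dents on \emph{both} sides with the correct labeling and orientation conventions, and checking that the extremal dent sets $P_{\mathsf{min}}$ and $P^{\prime}_{\mathsf{max}}$ really match the extremal shapes $\lambda_{\ast}$ and $\mu_{\ast}$; any early sign or orientation choice propagates through the whole argument. Second, the row‑by‑row sign cancellation, and in particular verifying that it meshes precisely with the dual‑Pieri step — that is, that the vertical‑strip family produced on the $P$ side and the horizontal‑strip family produced on the $P'$ side interlock so that exactly one term, and no other, survives. The remaining pieces (the branching and Pieri bookkeeping, the specialization identities for skew Schur polynomials) should be a routine adaptation of the proof of Theorem~\ref{thm:1}.
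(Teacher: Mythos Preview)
Your proposal is correct and complete in outline, but it takes a genuinely different route from the paper's proof. The paper applies the Assaf--McNamara skew Pieri rule (their Theorem~\ref{thm.AM}) in one stroke: summing the identity
\[
s_{\lambda_{\ast}/\mu_{\ast}}\,e_i=\sum_{k=0}^{i}(-1)^k\sum_{\substack{\lambda^{+}/\lambda_{\ast}\ (i-k)\text{-v.s.}\\ \mu_{\ast}/\mu_{-}\ k\text{-h.s.}}}s_{\lambda^{+}/\mu_{-}}
\]
over $i$ gives $\prod_i(1+x_i)\,s_{\lambda_{\ast}/\mu_{\ast}}$ on the left and, since for the staircase $\mu_{\ast}$ horizontal-strip removals are exactly the admissible $\mu(P')$ with sign $(-1)^{d'(P')}$, the signed enumeration on the right. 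No further cancellation lemma is needed. Your route instead first pushes the ordinary dual Pieri rule through the skewing operator $s_\mu^\perp$ to get the step-1 identity (valid: it follows from $\Delta E=E\otimes E$ and the coproduct formula $f^\perp(gh)=\sum f_{(1)}^\perp(g)\,f_{(2)}^\perp(h)$, which forces $s_{\mu/\beta}^\perp(E)=E$ exactly when $\mu/\beta$ is a vertical strip), and then establishes a separate sign-cancellation lemma on the staircase to collapse the $\beta$-sum to $\beta=\mu_{\ast}$. In effect you are \emph{reproving} the special case of Assaf--McNamara that the paper cites, so your argument is more self-contained at the cost of being longer. Two small remarks: the skew-Schur dictionary for doubly-dented trapezoids that you flag as an obstacle is already stated in the paper as Theorem~\ref{thm.AF}, so no new work is required there; and your ``cascading'' argument is really just the observation that, because any $\mu$ with $\mu_\ast/\mu$ a horizontal strip is automatically a partition when $\mu_\ast$ is a staircase, the bracketed sum factors row by row, and then the partition and nonnegativity constraints on $\beta$ force $\beta_i=(\mu_\ast)_i$ for every $i$ whenever the product is nonzero --- this deserves a cleaner statement than the cascade metaphor.
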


\begin{remark}
    In Theorem \ref{thm:2}, the condition on $\mathbf{r}'$ translates to the requirement that $P'$, the set of labels of deleted (right-pointing) unit triangles on the left side, contains exactly one element from each pair $\{2k-1, 2k\}$ for $k \in [l]$, as well as the entire set $[n+l] \setminus [2l]$. This structural property is essential; it allows us to express the signed enumeration of $(\r,\r^{\prime})$-block diagonally symmetric lozenge tilings in terms of Schur polynomials indexed by certain partitions, which is one of the key ingredients in our proof presented in \ref{sec:Thm2.10proof}.
\end{remark}

\section{The method of non-intersecting lattice paths}\label{sec:latticepath}

The method of non-intersecting lattice paths has been widely applied to enumerate lozenge or domino tilings of a region; see \cite[Section 3.1]{Propp15} and references therein. The idea is to establish a bijection between the set of tilings of a region and the set of families of non-intersecting lattice paths contained in this region. The way to enumerate families of non-intersecting lattice paths depends on the endpoints of these paths. 

If the set of starting and ending points are fixed and they are compatible\footnote{The set of starting points $U=\{u_1,\dots,u_n\}$ and ending points $V=\{v_1,\dots,v_n\}$ are said to be \textit{compatible} if the only way to form families of non-intersecting paths connecting $U$ with $V$ is from $u_i$ to $v_i$ for all $i$.}, then one can apply the Lindstr\"om--Gessel--Viennot (LGV) theorem to enumerate them. On the other hand, if the set of starting and ending points is fixed but they are not compatible (this usually appears when considering tilings of a region with holes), then one can apply the extension of the LGV theorem due to the second author \cite{Lee22}. For (diagonally) symmetric tilings, such tilings typically correspond to families of non-intersecting lattice paths with fixed starting points, while the ending points may vary over a specified set. In this case, Stembridge's generalization \cite{Stem90} provides a Pfaffian formula for these families (see \cite{Ciucu15} for an example).

The above methods do not apply directly to our $\r$-block diagonally symmetric lozenge tilings due to the block diagonally symmetric conditions stated in Definition \ref{def.rblock}. In this section, we review how to obtain a family of non-intersecting lattice paths from a lozenge tiling of $T(n,m;P)$, and then prove Theorem \ref{thm:1} by slightly modifying its corresponding lattice paths.

We present two lemmas that are used in our proof of Theorem \ref{thm:1}. Consider the integer lattice $\mathbb{Z}^2$ with horizontal (resp., vertical) lattice lines oriented toward east (resp., south). We assign a weight $q^{k}t$ to all the vertical unit segments in the strip bounded by two lines $y=x+k$ and $y=x+(k+1)$, for each integer $k$, and assign a weight $1$ to all the horizontal unit segments. For any two points $(x,y)$ and $(z,w)$ on $\mathbb{Z}^2$, let $\wt_{q,t}((x,y)\rightarrow(z,w))$ be the sum of weights\footnote{The weight of a given lattice path is the product of the weights of all unit segments in the path.} of all the lattice paths going from $(x,y)$ to $(z,w)$. The first lemma states that for any integers $a$ and $b$, the total weight of lattice paths joining $(a,b)$ and $(0,0)$ is given by a $(q,t)$-analogue of the binomial coefficient.
\begin{lemma}\label{lem:3.1}
    For integers $a$ and $b$, 
    \begin{equation}\label{eq:latticepathweight}
        \wt_{q,t}((a,b)\rightarrow(0,0))=q^{\frac{(b-1)b}{2}}t^{b}\begin{bmatrix} b-a\\b \end{bmatrix}_q.
    \end{equation}
\end{lemma}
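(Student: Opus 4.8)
The plan is to count weighted lattice paths directly. A lattice path from $(a,b)$ to $(0,0)$ using unit steps east and south exists only when $a \le 0$ and $b \ge 0$ (so that we move left by $|a|$ and down by $b$); in that case it consists of exactly $|a| = -a$ horizontal steps and $b$ vertical steps, so its length is fixed and the total number of such paths is $\binom{-a+b}{b} = \binom{b-a}{b}$. If this condition fails, both sides of \eqref{eq:latticepathweight} should vanish, which matches the convention that the $q$-binomial $\begin{bmatrix} b-a \\ b \end{bmatrix}_q$ is $0$ unless $b-a \ge b \ge 0$, i.e. $a \le 0 \le b$. So I would first dispose of the degenerate cases and then assume $a \le 0 \le b$.

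Next I would track the weight contributed by the vertical steps (horizontal steps all have weight $1$, hence contribute nothing). A vertical unit segment lying in the strip between $y = x + k$ and $y = x+(k+1)$ carries weight $q^k t$; equivalently, a vertical step from $(x,y)$ to $(x,y-1)$ lies in the strip with $k = y-1-x$, so it has weight $q^{y-1-x} t$. Since the path has $b$ vertical steps, the $t$-exponent is always $b$, giving the factor $t^b$; it remains to show the $q$-weights sum to $q^{\binom{b}{2}} \begin{bmatrix} b-a \\ b \end{bmatrix}_q$. The cleanest way is to read off, for each vertical step, the quantity $k = (\text{current } y) - 1 - (\text{current } x)$. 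I would parametrize a path by the positions at which its vertical steps occur and observe that as one traverses the path from $(a,b)$ to $(0,0)$, the value of $y - x$ starts at $b - a$, ends at $0$, decreases by $1$ at each vertical step and increases by $1$ at each horizontal step. Labeling the vertical steps $1, \dots, b$ in order of traversal, if $h_j \ge 0$ horizontal steps precede the $j$-th vertical step (beyond those counted for earlier vertical steps—more precisely, if $c_j$ is the total number of horizontal steps taken before the $j$-th vertical step), then the value of $y-x$ just before that step is $(b-a) - (j-1) + c_j$, so $k_j = (b-a) - (j-1) + c_j - 1$ and the vertical step contributes $q^{(b-a) - j + c_j}$. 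Hence the $q$-weight of the whole path is $q^{\sum_{j=1}^{b}\left((b-a) - j + c_j\right)} = q^{b(b-a) - \binom{b+1}{2} + \sum_j c_j} = q^{\binom{b}{2}} \cdot q^{-ab + \sum_j c_j}$, using $b(b-a) - \binom{b+1}{2} = \binom{b}{2} - ab$.

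Then I would sum over all paths. The multiset $\{c_1 \le c_2 \le \dots \le c_b\}$ with each $c_j \in \{0, 1, \dots, -a\}$ ranges over all weakly increasing sequences, which are exactly the $\binom{b-a}{b}$ lattice paths; by the standard combinatorial interpretation of the Gaussian binomial coefficient as the generating function for such sequences (equivalently, for partitions in a $b \times (-a)$ box), $\sum_{\text{paths}} q^{\sum_j c_j} = \begin{bmatrix} b + (-a) \\ b \end{bmatrix}_q = \begin{bmatrix} b-a \\ b \end{bmatrix}_q$. Combining, $\wt_{q,t}((a,b)\to(0,0)) = t^b \, q^{\binom{b}{2}} \, q^{-ab} \begin{bmatrix} b-a \\ b \end{bmatrix}_q$. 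This is off by a factor $q^{-ab}$ from the claimed formula, so I would recheck the weight convention against Figure-level conventions in the paper: most likely the strip containing a vertical step should be read as $k = y - x$ (not $y-1-x$), or the indexing of strips is shifted, which absorbs exactly the $q^{-ab}$ discrepancy and yields the stated $q^{\binom{b}{2}} t^b \begin{bmatrix} b-a \\ b \end{bmatrix}_q$. The main obstacle is precisely this bookkeeping: getting the strip-index of each vertical step right relative to the paper's orientation conventions (east/south, and which line bounds strip $k$), since an off-by-one or a sign there changes the $q$-prefactor. Everything else is the routine identification of path generating functions with Gaussian binomials, which I would invoke rather than reprove.
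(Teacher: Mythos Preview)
Your approach is sound and genuinely different from the paper's: the paper establishes the recurrence
\[
\wt_{q,t}((a,b)\to(0,0))=\wt_{q,t}((a+1,b)\to(0,0))+q^{\,b-a-1}t\,\wt_{q,t}((a,b-1)\to(0,0))
\]
by splitting on the first step and then verifies \eqref{eq:latticepathweight} by induction on $b-a$, whereas you aim for a direct weighted bijection with partitions in a box. Your route is a bit more informative (it explains \emph{why} the Gaussian binomial appears) but requires more bookkeeping.

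That bookkeeping is where you slip. You assert that ``$y-x$ \ldots increases by $1$ at each horizontal step,'' but an east step sends $(x,y)$ to $(x+1,y)$, so $y-x$ \emph{decreases} by $1$; thus both kinds of step decrease $y-x$. With $c_j$ horizontal steps before the $j$th vertical step, the position just before that step is $(a+c_j,\,b-(j-1))$, so the strip index is
\[
k_j=(b-(j-1))-1-(a+c_j)=b-a-j-c_j,
\]
with a \emph{minus} $c_j$. Summing,
\[
\sum_{j=1}^{b}k_j=\binom{b}{2}-ab-\sum_{j=1}^{b}c_j,
\]
and hence
\[
\wt_{q,t}((a,b)\to(0,0))=t^{b}\,q^{\binom{b}{2}-ab}\sum_{0\le c_1\le\cdots\le c_b\le -a}q^{-\sum_j c_j}.
\]
Now pass to the complementary partition $(-a-c_1,\ldots,-a-c_b)$ inside the $b\times(-a)$ box (equivalently use $\begin{bmatrix} n\\k\end{bmatrix}_{q^{-1}}=q^{-k(n-k)}\begin{bmatrix} n\\k\end{bmatrix}_{q}$) to get
\[
\sum_{0\le c_1\le\cdots\le c_b\le -a}q^{-\sum_j c_j}=q^{\,ab}\begin{bmatrix} b-a\\ b\end{bmatrix}_q,
\]
and the stray $q^{-ab}$ cancels exactly, yielding \eqref{eq:latticepathweight}. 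So the ``discrepancy'' you blamed on conventions is your own sign error; once corrected, your argument is complete and no shift in the strip indexing is needed.
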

\begin{proof}
    Since each step on $\mathbb{Z}^2$ can only move toward east or south, if $a>0$, $b<0$, or $b<a$, then there is no lattice path going from $(a,b)$ to $(0,0)$, and thus $\wt_{q,t}((a,b)\rightarrow(0,0))=0$. In these cases, \eqref{eq:latticepathweight} holds due to $\begin{bmatrix} b-a\\b \end{bmatrix}_q=0$. It suffices to verify \eqref{eq:latticepathweight} when $a\leq0$ and $b\geq0$.

    When $a=0$, there is only one (vertical) lattice path whose weight is $\prod_{i=0}^{b-1}q^it=q^{\frac{(b-1)b}{2}}t^b$. Similarly, when $b=0$, there is only one (horizontal) lattice path whose weight is $1$. In both cases, their weights agree with the right-hand side of \eqref{eq:latticepathweight}.

    Observe that the set of lattice paths going from $(a,b)$ to $(0,0)$ can be partitioned according to their first step. If the first step is horizontal, this horizontal edge has a weight of $1$ and the rest of the path is determined by the path connecting $(a+1,b)$ to $(0,0)$. On the other hand, if the first step is vertical, this vertical edge has a weight $q^{b-a-1}t$ and the rest of the path is determined by the path connecting $(a,b-1)$ to $(0,0)$. We obtain the following recurrence relation:
    \begin{equation}\label{eq:recurrence}
        \wt_{q,t}((a,b)\rightarrow(0,0))=\wt_{q,t}((a+1,b)\rightarrow(0,0))+q^{b-a-1}t\wt_{q,t}((a,b-1)\rightarrow(0,0)).
    \end{equation}
    Then \eqref{eq:latticepathweight} follows from the induction on the quantity $b-a$. We omit the computation here since this can be easily done using the definition of the $q$-binomial coefficient.
\end{proof}

Our second lemma is a determinant formula due to Krattenthaler. 
\begin{lemma}{\cite[Equation (3.12)]{Krattenthaler}}\label{lem:3.2}
    Given a non-negative integer $n$. Let $L_1,L_2,\ldots,L_n$ and $M$ be indeterminates. Then there holds
    \begin{equation*}
        \det\left(q^{jL_i}\begin{bmatrix} M\\L_i+j \end{bmatrix}_q\right)_{1\leq i,j\leq n}=q^{(\sum_{i=1}^{n}iL_i)} \cdot \frac{\prod_{1\leq i<j\leq n}[L_{i}-L_{j}]_q}{\prod_{i=1}^{n}[L_{i}+n]_q!}\cdot\frac{\prod_{i=1}^{n}[M+i-1]_q!}{\prod_{i=1}^{n}[M-L_{i}-1]_q!}.
    \end{equation*}
\end{lemma}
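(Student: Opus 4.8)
The plan is to reduce the determinant to a canonical ``product form'' whose evaluation is a classical Vandermonde-type identity, and then to recombine the explicit monomial and $(1-q)$ factors that accumulate along the way. Throughout I write $X_i = q^{L_i}$, so that the two kinds of $q$-integers appearing in the entry become linear in $X_i$ and $X_i^{-1}$.

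First I would split each factorial in the $(i,j)$ entry so as to separate a purely row-dependent factor from a telescoping product. Using $\frac{1}{[L_i+j]_q!} = \frac{1}{[L_i+n]_q!}\prod_{s=j+1}^{n}[L_i+s]_q$ and $\frac{1}{[M-L_i-j]_q!} = \frac{1}{[M-L_i-1]_q!}\prod_{s=1}^{j-1}[M-L_i-s]_q$, the entry becomes
\[
q^{jL_i}\,\frac{[M]_q!}{[L_i+n]_q!\,[M-L_i-1]_q!}\prod_{s=j+1}^{n}[L_i+s]_q\prod_{s=1}^{j-1}[M-L_i-s]_q,
\]
and I would factor the middle row factor $\frac{[M]_q!}{[L_i+n]_q!\,[M-L_i-1]_q!}$ out of row $i$. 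Substituting $X_i=q^{L_i}$ gives $[L_i+s]_q=(1-q^sX_i)/(1-q)$ and $[M-L_i-s]_q=(1-q^{M-s}X_i^{-1})/(1-q)$; each entry carries exactly $n-1$ such brackets, producing a global $(1-q)^{-n(n-1)}$. Absorbing the prefactor $q^{jL_i}=X_i^{\,j}$ to clear the negative powers $X_i^{-(j-1)}$ coming from the second product, then pulling the column constants $(-1)^{n-j}q^{\sum_{s>j}s}$ out of column $j$ and a factor $X_i$ out of each row, reduces the task to evaluating the monic determinant
\[
\det\Big(\textstyle\prod_{s=j+1}^{n}(X_i-q^{-s})\prod_{s=1}^{j-1}(X_i-q^{M-s})\Big)_{1\le i,j\le n}.
\]

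Next I would recognize this as the canonical form $\det\big(\prod_{k=j+1}^{n}(X_i+a_k)\prod_{k=1}^{j-1}(X_i+b_k)\big)$ with $a_k=-q^{-k}$ and $b_k=-q^{M-k}$. Each entry is monic of degree $n-1$ in $X_i$, so the determinant, viewed as the Gram-type determinant of $n$ monic polynomials evaluated at $X_1,\dots,X_n$, equals the Vandermonde $\prod_{1\le i<j\le n}(X_i-X_j)$ times a constant; comparing against the $n=2$ base case (or citing Krattenthaler's general product-determinant lemma) identifies that constant as $\prod_{1\le i<j\le n}(b_i-a_j)$. Finally I would back-substitute: here $X_i-X_j=-q^{L_j}(1-q)[L_i-L_j]_q$ and $b_i-a_j=q^{-j}(1-q)[M+j-i]_q$, and reindexing by the difference $d=j-i$ turns $\prod_{i<j}[M+j-i]_q$ into $\prod_{d=1}^{n-1}[M+d]_q^{\,n-d}=\prod_{i=1}^{n}\frac{[M+i-1]_q!}{[M]_q!}$, which combines with the row factor $\prod_i[M]_q!$ to give precisely the factorial ratio $\frac{\prod_i[M+i-1]_q!}{\prod_i[L_i+n]_q!\,[M-L_i-1]_q!}$ claimed in the statement, and the Vandermonde product yields the factor $\prod_{i<j}[L_i-L_j]_q$.

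The bulk of the work, and the only genuine obstacle, is the resulting bookkeeping of the purely monomial and normalization factors. I would verify three cancellations: the powers of $(1-q)$ cancel because $-n(n-1)+2\binom{n}{2}=0$; the two sign contributions $(-1)^{\binom{n}{2}}$ (one from the column constants, one from the Vandermonde) multiply to $+1$, while $b_i-a_j$ carries no sign; and the leftover powers of $q$ collapse to exactly $q^{\sum_i iL_i}$, since the column-constant contribution $\sum_s s(s-1)$ cancels the contribution $\sum_{i<j}j=\sum_j j(j-1)$ from $\prod_{i<j}(b_i-a_j)$, leaving the row extraction $\prod_i X_i=q^{\sum_i L_i}$ together with the Vandermonde monomial $\prod_{i<j}q^{L_j}=q^{\sum_i(i-1)L_i}$, whose sum is $\sum_i iL_i$. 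Care with the index ranges in the canonical lemma (so that the product over $b_i-a_j$ runs over $1\le i<j\le n$) is the one place where a sign or a missing factor could easily slip in, so I would pin that down on the $n=2$ case first.
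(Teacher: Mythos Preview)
The paper does not give its own proof of this lemma: it simply quotes the identity as \cite[Equation (3.12)]{Krattenthaler} and uses it as a black box. Your argument is therefore not being compared against anything in the paper itself.

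That said, your sketch is correct and is essentially the standard route to such identities: you telescope the $q$-factorials to pull a row-dependent factor $\dfrac{[M]_q!}{[L_i+n]_q!\,[M-L_i-1]_q!}$ out of each row, then rewrite the remaining product so that the determinant becomes the canonical form
\[
\det_{1\le i,j\le n}\Big(\textstyle\prod_{k=j+1}^{n}(X_i+a_k)\prod_{k=1}^{j-1}(X_i+b_k)\Big)
=\prod_{1\le i<j\le n}(X_i-X_j)\prod_{1\le i<j\le n}(b_i-a_j),
\]
which is exactly Krattenthaler's general determinant lemma. Your bookkeeping of signs, $(1-q)$-powers, and $q$-exponents checks out: the $(1-q)$ count is $-n(n-1)+2\binom{n}{2}=0$; the two $(-1)^{\binom{n}{2}}$ contributions cancel; and the residual $q$-exponent is $\sum_i L_i+\sum_i(i-1)L_i=\sum_i iL_i$, matching the stated formula. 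The one place to be careful, as you note, is the index range in the canonical lemma (so that only $b_1,\dots,b_{n-1}$ and $a_2,\dots,a_n$ enter), and your $n=2$ check pins that down correctly.
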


For positive integers $m,n$, let $P$ be a fixed $m$-subset of $[m+n]$. We present below how to encode a lozenge tiling of $T(n,m;P)$ as families of $n$ non-intersecting lattice paths on $\mathbb{Z}^2$. Let $U_T$ and $V_T$ be the sets of midpoints of each unit segment on the left side and the right side of $T(n,m)$, respectively. Thus, $|U_T|=n$ and $|V_T|=m+n$ and we assume that the elements of $U_T$ and $V_T$ are ordered from bottom to top.

Given a lozenge tiling of $T(n,m;P)$, we decorate each positive and negative lozenge by a path step joining the midpoints of two vertical sides of that lozenge. Note that we do not decorate horizontal lozenges. Regarding each of these path steps as oriented from left to right, the union of all these path steps forms an $n$-tuple of non-intersecting lattice paths connecting $U_T$ with an $n$-subset of $V_T$, which depends on $P$. Furthermore, we transform them into $n$ non-intersecting lattice paths on $\mathbb{Z}^2$; see Figures \ref{fig:path1} and \ref{fig:path2} for an example. 

Specifically, we denote the complement of $P$ as $[m+n]\setminus P=\{t_1,\ldots,t_{n}\}$, where the elements are written in increasing order. On $\mathbb{Z}^2$, let $U=\{u_1,\dots,u_n\}$ and $V(P)=\{v_1,\dots,v_n\}$ be two collections of points, where $u_{k}=(k,m+k)$ and $v_{k}=(t_k,t_k)$, for $k=1,\ldots n$. We write $\mathscr{P}(U,V(P))$ for the set of $n$-tuples of non-intersecting lattice paths on $\mathbb{Z}^2$ going from $U$ to $V(P)$, where the $k$th path connects $u_k$ with $v_k$ for $k=1,\ldots n$. Then a lozenge tiling of $T(n,m;P)$ can be identified with an element of $\mathscr{P}(U,V(P))$. Note that this lattice path encoding is invertible: given a family of non-intersecting lattice paths starting at $u_k \in U$ and ending at $v_k \in V_P$ for $k=1,\ldots,n$, one can easily recover the lozenge tiling of $T(n,m;P)$. Indeed, there is a bijection between the set of lozenge tilings of $T(n,m;P)$ and $\mathscr{P}(U,V(P))$.
\begin{figure}[hbt!]
    \centering
    \subfigure[]{\label{fig:path1}\includegraphics[width=0.17\textwidth]{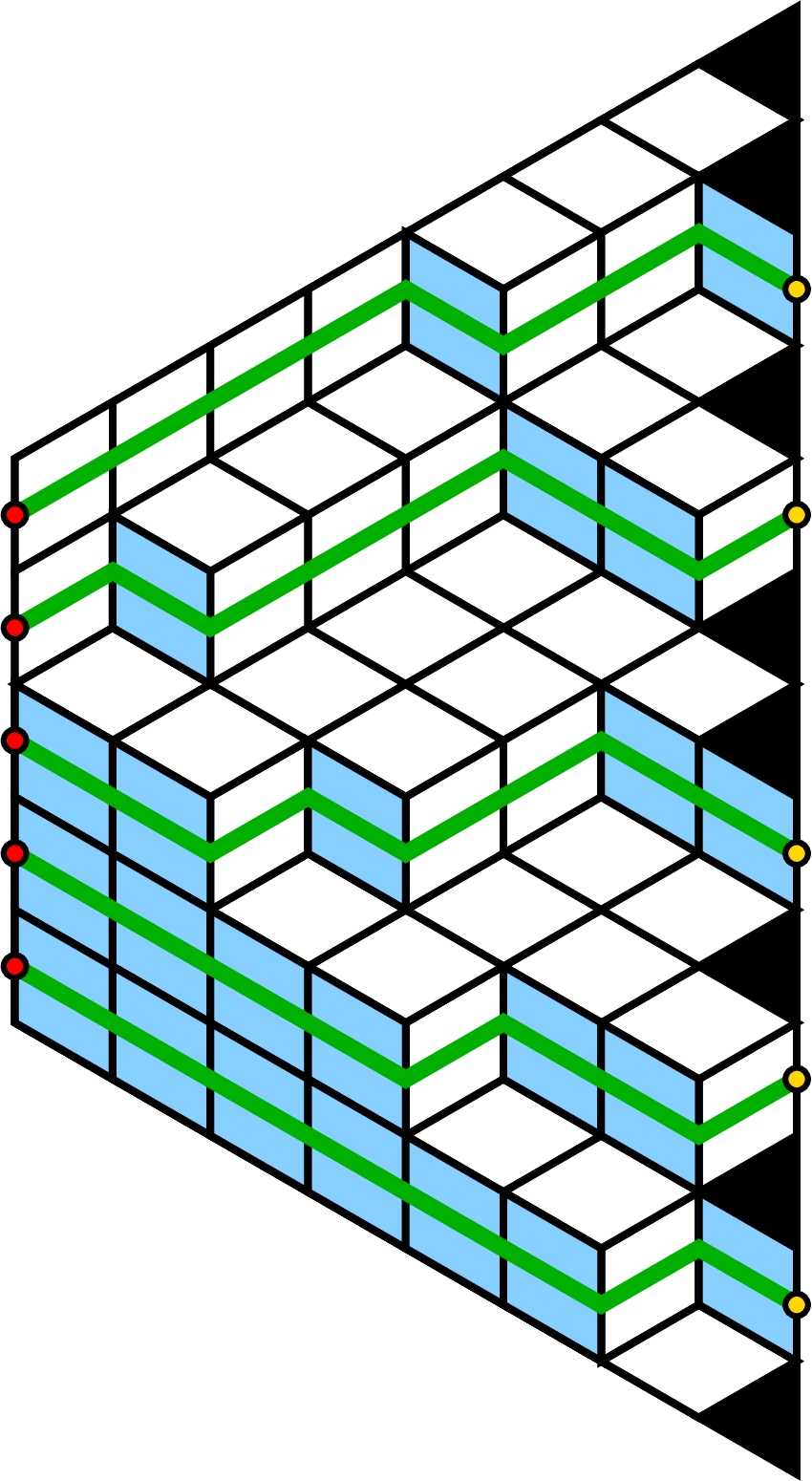}}
    \hspace{20mm}
    \subfigure[]{\label{fig:path2}\includegraphics[width=0.34\textwidth]{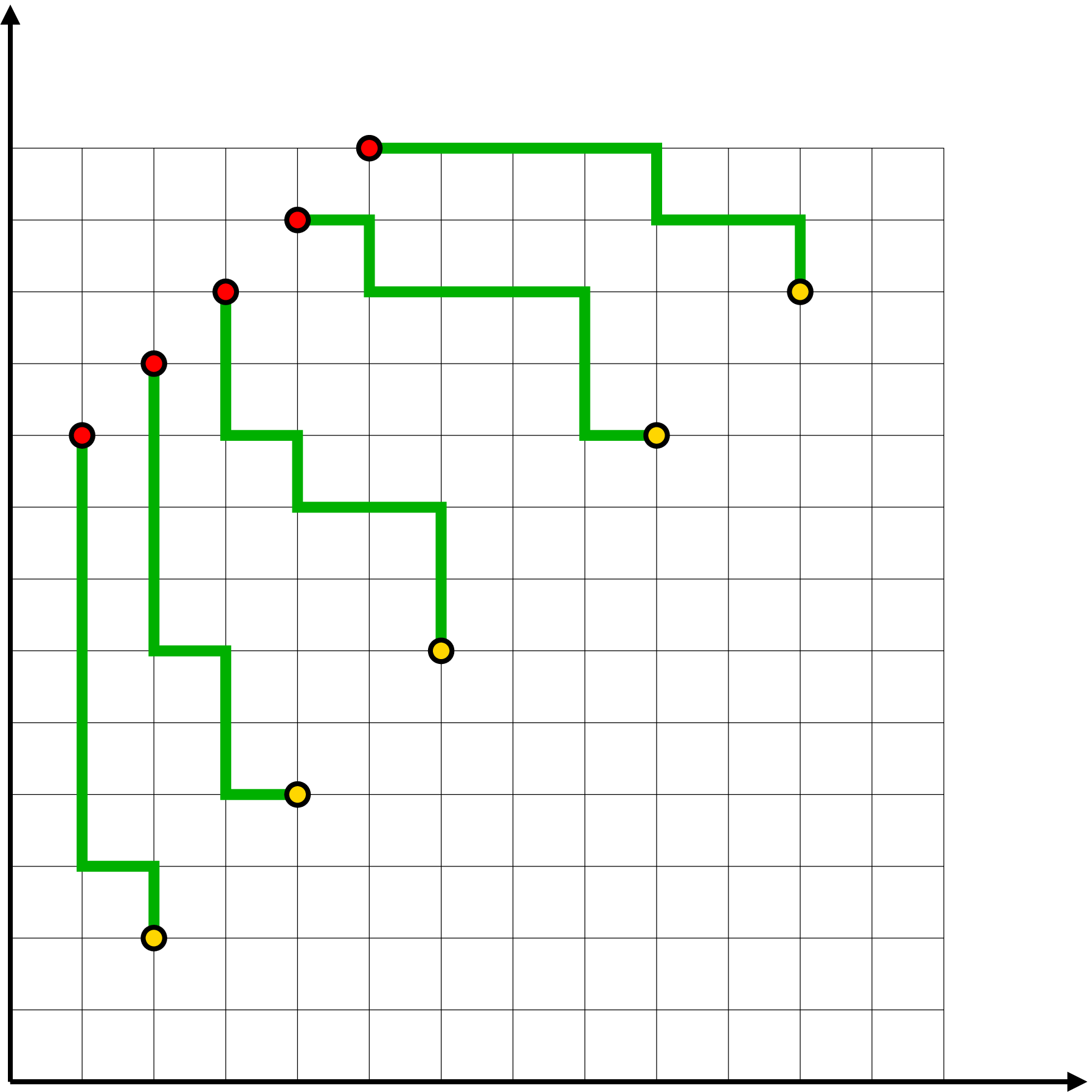}}
    \caption{(a) The lattice path encoding of the lozenge tiling given in Figure \ref{fig:blockTrap2}. (b) The family of non-intersecting lattice paths on $\mathbb{Z}^2$ that corresponds to Figure \ref{fig:path1}.}
\end{figure}

We remind the reader that in $T(n,m;P)$, horizontal and positive lozenges are weighted by $1$, negative lozenges are weighted by $q^{k}t$ if the distance from the right side of the region to that lozenge is $\frac{\sqrt{3}}{2}k$. Based on the lattice path encoding mentioned previously, one can observe that the bijection between the set of lozenge tilings of $T(n,m;P)$ and $\mathscr{P}(U,V(P))$ is weight preserving (note that lattice points $v_k$ are on $y=x$). Therefore, we can find the $(q,t)$-generating function of $T(n,m;P)$ for a fixed $n$-subset $P \subseteq [m+n]$ by enumerating the total weight of $\mathscr{P}(U,V(P))$, which is denoted by $\wt_{q,t}(\mathscr{P}(U,V(P)))$, using the LGV theorem.

The $(q,t)$-generating function of $\r$-block diagonally symmetric lozenge tilings of $H(n,m,m)$ is given by $\sum_{P}M_{q,t}(T(n,m;P)) = \sum_{P} \wt_{q,t}(\mathscr{P}(U,V(P)))$, where the sum runs over all the sets $P$ subject to the condition \eqref{eq.condition}. As mentioned at the beginning of the section, the LGV theorem cannot be applied directly due to the non-fixed ending points. On the other hand, Stembridge's generalization is not applicable since the selection of ending points is not entirely free. We resolve this issue by applying a simple trick to the lattice paths.
\begin{figure}[hbt!]
    \centering
    \subfigure[]{\label{fig:path3}\includegraphics[width=0.34\textwidth]{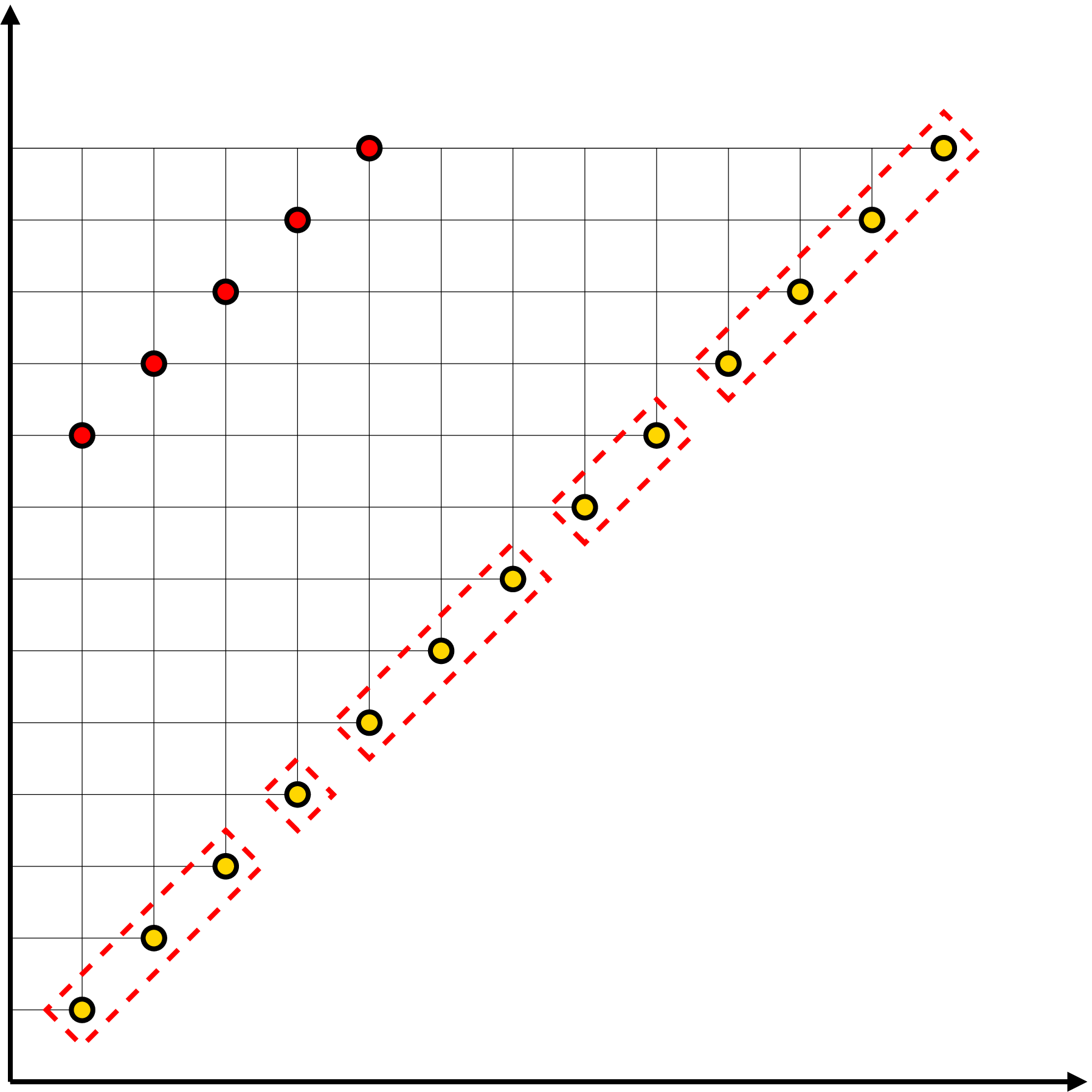}}
    \hspace{10mm}
    \subfigure[]{\label{fig:path4}\includegraphics[width=0.34\textwidth]{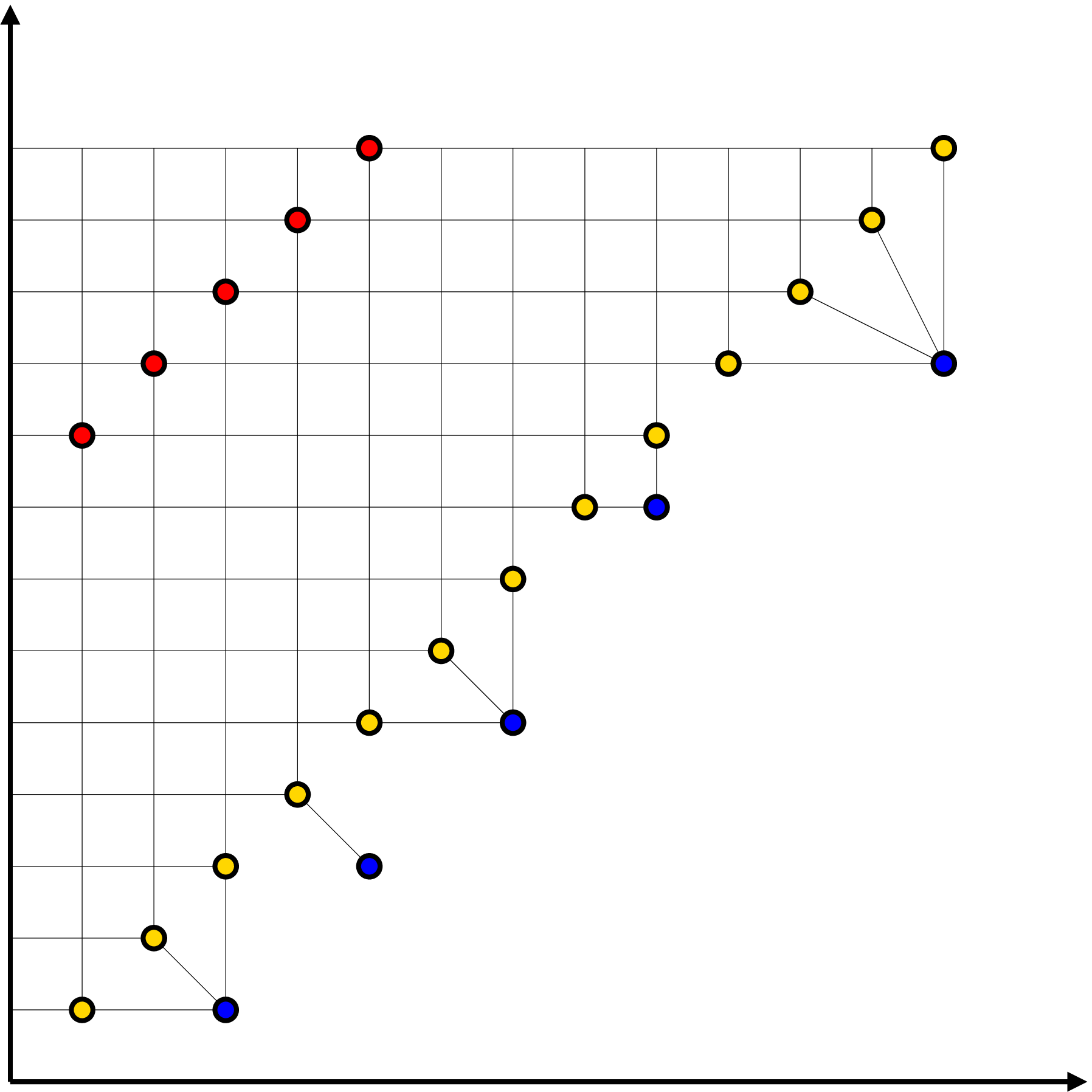}}
    \caption{(a) When $\mathbf{r}=(2,0,2,1,3)$, the five groups of ending points are shown in dotted boxes. (b) The five newly added points are indicated in blue.}
\end{figure}
\begin{proof}[First proof of Theorem \ref{thm:1}]
Notice that in the condition \eqref{eq.condition}, the set $[S_k+k]\setminus[S_{k-1}+k-1]$ has cardinality $r_k+1$. It is not hard to see that the elements of the complement of $P$, $[m+n]\setminus P=\{t_1,\ldots,t_n\}$, satisfy the conditions $t_k\in[S_k+k]\setminus[S_{k-1}+k-1]$ for each $k$. In other words, the element $t_k$ has $r_k+1$ choices from the set $[S_k+k]\setminus[S_{k-1}+k-1]$. Therefore, the weighted lozenge tilings that we considered are equivalent to the weighted families of non-intersecting lattice paths on $\mathbb{Z}^2$ that starts at $u_{k}=(k,m+k)$ and ends at $v_{k}=(t_k,t_k)$, where $t_k\in[S_k+k]\setminus[S_{k-1}+k-1]$ for $k=1,\ldots n$.

Now, the simple trick is presented as follows. We first partition the $m+n$ points $(1,1),\ldots,(m+n,m+n)$ into $n$ groups, the $k$th group $V_k$ consists of $r_k+1$ points $(S_{k-1}+k,S_{k-1}+k),\ldots, (S_k+k,S_k+k)$, for $k=1,\dots,n$. Figure \ref{fig:path3} shows how the ending points are grouped when $\mathbf{r}=(2,0,2,1,3)$. From the above discussion, the lattice path starting at $u_{k}$ should end at one of the points in $V_k$. We then add a new point $w_k$ at the bottom right of the $k$th group and $r_k+1$ edges joining this new point $w_k$ with each point in $V_k$. For convenience, let $W=\{w_1,\dots,w_n\}$ be the collection of these newly added points. These newly added edges are oriented toward $w_k$ and assigned a weight of $1$; see Figure \ref{fig:path4} for an illustration.

It is clear that any path going from $u_k$ to $w_k$ must pass through one of the points in $V_k$ for each $k=1,\ldots n$. Since the edges incident to $w_k$ is weighted by $1$, one can easily check that, for each $k=1,\ldots n$,
\begin{equation}\label{eq.pf1-1}
    \wt_{q,t}(u_k \rightarrow w_k) = \sum_{v_k} \wt_{q,t}(u_k \rightarrow v_k),
\end{equation}
where the sum runs over the points in $V_k$, that is, $v_{k}=(t_k,t_k)$ with $t_k\in[S_k+k]\setminus[S_{k-1}+k-1]$. Therefore, the total weight of $\mathscr{P}(U,W)$ can be expressed as
\begin{equation}\label{eq.pf1-3}
    \wt_{q,t}(\mathscr{P}(U,W))
      = \sum_{P} \wt_{q,t}(\mathscr{P}(U,V(P))) 
      = \sum_P \M_{q,t}(T(n,m;P)),  
\end{equation}
where the last two sums run over all the sets $P$ subject to the condition \eqref{eq.condition}. It remains to show that the total weight of our modified lattice paths is given by the product formula \eqref{eq:thm1.1}.

We can now apply the LGV theorem to enumerate $\wt_{q,t}(\mathscr{P}(U,W))$. This quantity is given by the determinant of the matrix $A = [a_{i,j}]_{i,j=1}^{n}$, where $a_{i,j}$ is given by the total weight of paths going from $u_i$ to $w_j$. Note that the weight on $\mathbb{Z}^2$ is translation invariant along the $y=x$ direction, this implies that
\begin{equation}\label{eq.pf1-4}
    \wt_{q,t}((x,y)\rightarrow(z,w))=\wt_{q,t}((x-z,y-z)\rightarrow(0,w-z))
\end{equation}
for all integers $x$, $y$, $z$, and $w$.

By Lemma \ref{lem:3.1}, \eqref{eq.pf1-1}, and \eqref{eq.pf1-4}, the $(i,j)$-entry of the matrix $A$ is given by
\begin{align}\label{eq:entry}
    a_{i,j} & = \wt_{q,t}(u_i \rightarrow w_j) \nonumber \\
    & =\sum_{k=0}^{r_j}\wt_{q,t}((i,m+i)\rightarrow(S_{j-1}+j+k,S_{j-1}+j+k)) \nonumber \\
    & = \sum_{k=0}^{r_j}\wt_{q,t}((i-(S_{j-1}+j+k),m+i-(S_{j-1}+j+k))\rightarrow(0,0)) \nonumber \\
    & =\sum_{k=0}^{r_j}q^{\frac{(m+i-(S_{j-1}+j+k+1))(m+i-(S_{j-1}+j+k))}{2}}t^{m+i-(S_{j-1}+j+k)}\begin{bmatrix} m\\m+i-(S_{j-1}+j+k) \end{bmatrix}_q. 
\end{align}

To evaluate the determinant of $A$, we apply a sequence of column and row operations to make the entries of $A$ simple. We first apply row operations by subtracting the $(n-k)$th row from the $(n-k+1)$th row of $A$ for $k=1,\ldots,n-1$ successively. The resulting matrix is denoted by $A^{\prime} = [a_{i,j}^{\prime}]$, where $a_{i,j}^{\prime}$ is given by
\begin{equation}
    a_{i,j}^{\prime}=
    \begin{cases}
        \sum_{k=0}^{r_j}q^{\frac{(m-(S_{j-1}+j+k))(m+1-(S_{j-1}+j+k))}{2}}t^{m+1-(S_{j-1}+j+k)}\begin{bmatrix} m\\m+1-(S_{j-1}+j+k) \end{bmatrix}_q & \text{if $i=1$,}\\
        \begin{aligned}
        &q^{\frac{(m+i-(S_{j-1}+j+1))(m+i-(S_{j-1}+j))}{2}}t^{m+i-(S_{j-1}+j)}\begin{bmatrix} m\\m+i-(S_{j-1}+j) \end{bmatrix}_q\\
        &-q^{\frac{(m+i-(S_{j}+j+2))(m+i-(S_{j}+j+1))}{2}}t^{m+i-(S_{j}+j+1)}\begin{bmatrix} m\\m+i-(S_{j}+j+1) \end{bmatrix}_q
        \end{aligned} & \text{if $i\geq2$.}
    \end{cases}
\end{equation}

The entries of $A^{\prime}$ are still complicated, so we apply column operations by adding the columns of $A^{\prime}$, ranging from the $(k+1)$th to the $n$th, to the $k$th column of $A^{\prime}$ (again, for $k=1,\ldots,n-1$ successively). The resulting matrix is denoted by $A^{\prime\prime} = [a_{i,j}^{\prime\prime}]$, where $a_{i,j}^{\prime\prime}$ is given by
\begin{equation}
\begin{aligned}
    a_{i,j}^{\prime\prime}&=
    \begin{cases}
        \sum_{k=0}^{r_j+\cdots+r_n+(n-j)}q^{\frac{(m-(S_{j-1}+j+k))(m+1-(S_{j-1}+j+k))}{2}}t^{m+1-(S_{j-1}+j+k)}\begin{bmatrix} m\\m+1-(S_{j-1}+j+k) \end{bmatrix}_q & \text{if $i=1$},\\
        \begin{aligned}
        &q^{\frac{(m+i-(S_{j-1}+j+1))(m+i-(S_{j-1}+j))}{2}}t^{m+i-(S_{j-1}+j)}\begin{bmatrix} m\\m+i-(S_{j-1}+j) \end{bmatrix}_q\\
        &-q^{\frac{(m+i-(S_{n}+n+2))(m+i-(S_{n}+n+1))}{2}}t^{m+i-(S_{n}+n+1)}\begin{bmatrix} m\\m+i-(S_{n}+n+1) \end{bmatrix}_q
        \end{aligned} & \text{if $i\geq2$,}
    \end{cases}
    \\
    &=\begin{cases}
        \sum_{k=0}^{r_j+\cdots+r_n+(n-j)}q^{\frac{(m-(S_{j-1}+j+k))(m+1-(S_{j-1}+j+k))}{2}}t^{m+1-(S_{j-1}+j+k)}\begin{bmatrix} m\\m+1-(S_{j-1}+j+k) \end{bmatrix}_q & \text{if $i=1$,}\\
        q^{\frac{(m+i-(S_{j-1}+j+1))(m+i-(S_{j-1}+j))}{2}}t^{m+i-(S_{j-1}+j)}\begin{bmatrix} m\\m+i-(S_{j-1}+j) \end{bmatrix}_q & \text{if $i\geq2$.}
    \end{cases}
\end{aligned}
\end{equation}
The second equality follows from the fact that $S_n=m$ and $\begin{bmatrix} m\\i-n-1 \end{bmatrix}_q=0$ for $i=2,\ldots,n$. Note that 
\begin{equation}
    a_{i,1}^{\prime\prime}=q^\frac{(m+i-2)(m+i-1)}{2}t^{(m+i-1)}\begin{bmatrix} m\\m+i-1 \end{bmatrix}_q=0
\end{equation}
for $i=2,\ldots,n$.

Furthermore, the $(1,1)$-entry of $A^{\prime\prime}$ simplifies to
\begin{equation}
\begin{aligned}
    a_{1,1}^{\prime\prime}&=\sum_{k=0}^{m+n-1}q^{\frac{(m-(1+k))(m+1-(1+k))}{2}}t^{m+1-(1+k)}\begin{bmatrix} m\\m+1-(1+k) \end{bmatrix}_q\\
    &=\sum_{k=0}^{m}q^{\frac{(m-(1+k))(m+1-(1+k))}{2}}t^{m+1-(1+k)}\begin{bmatrix} m\\m+1-(1+k) \end{bmatrix}_q\\
    &=\sum_{k=0}^{m}q^{\frac{(k-1)k}{2}}t^{k}\begin{bmatrix} m\\k \end{bmatrix}_q\\
    &=\prod_{i=1}^{m}(1+q^{i-1}t),
\end{aligned}
\end{equation}
where the final sum follows from the $q$-binomial theorem. Thus, we have
\begin{equation}\label{eq.pf1-4.5}
    \det A = \det A^{\prime}  = \det A^{\prime\prime}=\prod_{i=1}^{m}(1+q^{i-1}t) \cdot \det A^{\prime\prime}_{1,1},
\end{equation}
where $\det A^{\prime\prime}_{1,1}$ is the $(1,1)$-minor of $A^{\prime\prime}$. 

To complete the proof, it is enough to evaluate
\begin{equation}
    \det A^{\prime\prime}_{1,1}=\det\Bigg(q^{\frac{(m+i-(S_{j}+j+1))(m+i-(S_{j}+j))}{2}}t^{(m+i-(S_{j}+j))}\begin{bmatrix} m\\m+i-(S_{j}+j) \end{bmatrix}_q\Bigg)_{1\leq i,j\leq n-1}.
\end{equation}
We can pull out a factor $q^{\frac{i(i-1)}{2}}t^{i}$ from the $i$th row and $q^{\frac{(m-(S_{j}+j+1))(m-(S_{j}+j))}{2}}t^{m-(S_{j}+j)}$ from the $j$th column for each $i,j=1,\ldots,n-1$, and obtain
\begin{equation}\label{eq.pf1-5}
    \det A^{\prime\prime}_{1,1}=q^{\alpha_0}t^{\beta}\det\Bigg(q^{i(m-(S_{j}+j))}\begin{bmatrix} m\\m+i-(S_{j}+j) \end{bmatrix}_q\Bigg)_{1\leq i,j\leq n-1},
\end{equation}
where $\alpha_0=\sum_{i=1}^{n-1}\Big[\frac{i(i-1)}{2}+\frac{(m-(S_i+i+1))(m-(S_i+i))}{2}\Big]$ and $\beta=\sum_{i=1}^{n-1}(S_{n}-S_{i})=\sum_{i=1}^{n}(S_{n}-S_{i})$. Taking the transpose and rearranging entries of the matrix above, \eqref{eq.pf1-5} becomes
\begin{equation}\label{eq.pf1-6}
    \det A^{\prime\prime}_{1,1}=q^{\alpha_0}t^{\beta}\det\Bigg(q^{j(m-(S_{i}+i))}\begin{bmatrix} m\\m-(S_{i}+i)+j \end{bmatrix}_q\Bigg)_{1\leq i,j\leq n-1}.
\end{equation}

Finally, by Lemma \ref{lem:3.2} with $M=m$ and $L_i=m-(S_{i}+i)$, we are able to evaluate the determinant on the right-hand side of \eqref{eq.pf1-6}. Putting this result with \eqref{eq.pf1-4.5} together, and using the fact from a calculation that $\alpha_0+\sum_{i=1}^{n-1}i(m-(S_i+i))=\sum_{i=1}^{n-1}\binom{S_n-S_i}{2}=\sum_{i=1}^{n}\binom{S_n-S_i}{2}$, we obtain the desired result \eqref{eq:thm1.1}. This completes the proof of Theorem \ref{thm:1}.
\end{proof}

\section{Schur polynomials and the dual Pieri rule}\label{sec:algproof}

Schur polynomials are ubiquitous in mathematics, acting as a bridge between algebra, combinatorics, and geometry. Their rich combinatorial and representation-theoretic properties make them essential in various frameworks. In Section \ref{sec:Schur}, we review the definition of Schur polynomials and explain the relationship between $\r$-block diagonally symmetric lozenge tilings of a hexagon and Schur polynomials. In Section \ref{sec:Thm2.4secondproof}, we give a second proof of Theorem \ref{thm:1} using this relationship. In Section \ref{sec:Thm2.10proof}, we prove Theorem \ref{thm:2}.

\subsection{Skew Schur polynomials and lozenge tilings of trapezoidal regions with dents}\label{sec:Schur}

A \textit{partition} $\lambda$ of $n$ is a weakly decreasing sequence $(\lambda_1,\lambda_2,\dots,\lambda_k)$ of non-negative integers whose sum $|\lambda| := \lambda_{1}+\cdots+\lambda_{k}$ is $n$. By convention, we set $\lambda = \emptyset$ when $k=0$. When a partition $\lambda$ consists of $n_{i}$ copies of $i$ for $i=0,\ldots,s$, we abbreviate this partition by $\lambda=s^{n_{s}}\cdots0^{n_{0}}$.

The \textit{Young diagram} of shape $\lambda$ is a collection of boxes, arranged in left-justified rows, such that there are $\lambda_i$ boxes at row $i$. For another partition $\mu$, we write $\mu \subseteq \lambda$ whenever $\mu$ is contained in $\lambda$ as Young diagrams, that is, $\mu_i \leq \lambda_i$ for all $i$. In this case, we identify the Young diagram of the \textit{skew shape} $\lambda/\mu$ by removing the boxes of the Young diagram of $\lambda$ which are also boxes of the Young diagram of $\mu$.  A skew shape is called a \textit{horizontal strip} (resp., \textit{vertical strip}) if it contains no two boxes in the same column (resp., row). A \textit{$k$-horizontal strip} is a horizontal strip with exactly $k$ boxes, and similarly for vertical strips. 

A \textit{semistandard Young tableau} (SSYT) of shape $\lambda/\mu$ is a filling of the Young diagram of $\lambda / \mu$ with positive integers such that the numbers are weakly increasing along each row while strictly increasing along each column. Let $SSYT^m(\lambda/\mu)$ be the set of SSYTs of shape $\lambda/\mu$ with the entries at most $m$. One can associate a weight $\x^T = x_1^{T_1}x_2^{T_2}\cdots$ to an SSYT $T$ where $T_i$ is the number of times that $i$ appears in $T$. The \textit{skew Schur polynomial} $s_{\lambda/\mu}$ is defined to be 
\begin{equation}\label{eq.defskewschur}
    s_{\lambda/\mu}(x_1,\dots,x_m) = \sum_{T \in SSYT^m(\lambda/\mu)} \x^T.
\end{equation}
In particular, when $\mu = \emptyset$, \eqref{eq.defskewschur} reduces to the Schur polynomial $s_{\lambda}$. See \cite[Chapters 7.10 and 7.15]{Stanley} for more details.

Recall that $s_{\lambda/\mu}(x_1,\dots,x_m)$ is symmetric in the variables $x_1,\dots,x_m$. It follows directly from the definition that $s_{\lambda/\mu}(x_1,\ldots,x_m)$ is a homogeneous polynomial of degree $|\lambda|-|\mu|$. In other words, it satisfies
\begin{equation}\label{eq.schurp1}
    s_{\lambda/\mu}(tx_1,\ldots,tx_m)=t^{|\lambda|-|\mu|}s_{\lambda/\mu}(x_1,\ldots,x_m),
\end{equation}
for an indeterminate $t$. The \textit{principal specialization} of the Schur polynomial $s_{\lambda}(q^{0},\ldots,q^{m-1})$ is given by the following product formula \cite[Equation 7.105]{Stanley}:
\begin{align}
    \label{eq.schurp2}
    s_{\lambda}(q^0,\ldots,q^{m-1}) & =\frac{\prod_{1\leq i<j\leq m}(q^{\lambda_i+m-i}-q^{\lambda_j+m-j})}{\prod_{1\leq i<j\leq m}(q^{j-1}-q^{i-1})}.
\end{align}

The connection between (skew) Schur polynomials and lozenge tilings of $T(n+l,m;P,P^{\prime})$ introduced in Section \ref{sec:rrblock} has been studied by Ayyer and Fischer \cite{AF}. Their result about the tiling generating function of $T(n+l,m;P,P^{\prime})$ can be rephrased as follows\footnote{In fact, our weight assignment of lozenges is in the reverse order of the weight assignment given by Ayyer and Fischer \cite{AF}. Since skew Schur polynomials are symmetric, the tiling generating function of $T(n+l,m;P,P^{\prime})$ are invariant under permuting the weights.}.
\begin{theorem}{\cite[Theorem 6.1]{AF}}\label{thm.AF}
    For positive integers $m,n,l$, let $P=(p_1,\dots,p_{m+l})$ be a subset of $[m+n+l]$ and $P^{\prime}=(p_1^{\prime},\dots,p_{l}^{\prime})$ be a subset of $[n+l]$ such that elements in $P$ and $P^{\prime}$ are written in increasing order. Then
    \begin{equation}\label{eq.AFskew}
        \M_{\mathbf{x}}(T(n+l,m;P,P^{\prime})) = s_{\lambda(P)/\mu(P^{\prime})}(x_1,\dots,x_m),
    \end{equation}
    where $\lambda(P) = (p_{m+l}-(m+l),\dots,p_2-2,p_1-1)$ and $\mu(P^{\prime}) = (p^{\prime}_{l} - l,\dots,p^{\prime}_2-2,p^{\prime}_1-1)$ are partitions, which may contain $0$ as parts. 
\end{theorem}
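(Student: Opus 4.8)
The plan is to prove Theorem~\ref{thm.AF} by the standard transfer-matrix (Gelfand--Tsetlin) description of lozenge tilings of a trapezoidal region, and to translate it directly into the combinatorial definition~\eqref{eq.defskewschur} of a skew Schur polynomial; this is in effect the argument of Ayyer and Fischer rewritten in the conventions of this paper.

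First I would slice $T(n+l,m;P,P^{\prime})$ along the $m+1$ vertical lattice lines it meets, indexed $0,1,\dots,m$ from left to right, and record on line~$i$ the set $B_i$ of heights at which a horizontal lozenge (or a boundary dent) straddles that line. Since the left side of the trapezoid carries $n+l$ unit edges and the right side carries $m+n+l$, and since deleting the dents $P^{\prime}$ (resp.\ $P$) forces precisely those heights on line~$0$ (resp.\ line~$m$) to be straddled while the non-horizontal lozenges always form $n$ disjoint paths crossing each vertical line once, one checks that $|B_i| = l+i$ for every $i$, with $B_0 = P^{\prime}$ and $B_m = P$. Writing the elements of $B_i$ in increasing order and subtracting the staircase $(\dots,2,1)$ produces a partition $\theta^{(i)}$ (padded by zeros) lying in an $(l+i)\times n$ box, and comparing with the definitions of $\lambda(P)$ and $\mu(P^{\prime})$ shows $\theta^{(0)} = \mu(P^{\prime})$ and $\theta^{(m)} = \lambda(P)$. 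Both $\lambda(P)$ and $\mu(P^{\prime})$ are genuine partitions because the $p_i$ and $p_i^{\prime}$ are strictly increasing; and if $\mu(P^{\prime}) \not\subseteq \lambda(P)$ then no tiling exists and both sides of~\eqref{eq.AFskew} vanish, so we may assume the skew shape $\lambda(P)/\mu(P^{\prime})$ is defined.

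Next I would examine a single column of rhombi, i.e.\ the passage from line $i-1$ to line~$i$. A local case analysis of how such a column can be tiled shows that $B_{i-1}$ and $B_i$ interlace in exactly the way that makes $\theta^{(i)}/\theta^{(i-1)}$ a horizontal strip, and that the number of negative lozenges in that column equals $|\theta^{(i)}| - |\theta^{(i-1)}|$. Under the weight convention of Section~\ref{sec:rblocklozenge} all negative lozenges in a fixed column carry the same variable $x_{c(i)}$, where $c\colon\{1,\dots,m\}\to\{1,\dots,m\}$ is the bijection recording horizontal distance from the right boundary; hence the weight of a tiling is $\prod_{i=1}^{m} x_{c(i)}^{\,|\theta^{(i)}|-|\theta^{(i-1)}|}$. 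Summing over all tilings of $T(n+l,m;P,P^{\prime})$ is therefore the same as summing this monomial over all chains $\mu(P^{\prime}) = \theta^{(0)} \subseteq \theta^{(1)} \subseteq \cdots \subseteq \theta^{(m)} = \lambda(P)$ whose successive quotients are horizontal strips; by the Gelfand--Tsetlin presentation of semistandard Young tableaux this equals $s_{\lambda(P)/\mu(P^{\prime})}(x_{c(1)},\dots,x_{c(m)})$, which equals $s_{\lambda(P)/\mu(P^{\prime})}(x_1,\dots,x_m)$ because skew Schur polynomials are symmetric. This is~\eqref{eq.AFskew}.

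The routine-but-delicate point, and the main obstacle, is the local interlacing-and-weight bookkeeping just described: one must confirm that the moves available inside a single trapezoidal column correspond exactly to adjoining a horizontal strip, and that the ``distance from the right boundary'' label lines up with the column index after the reflection built into this paper's weight convention (the same reflection underlies the footnote that our weights run in the reverse order of Ayyer--Fischer's). An alternative route that bypasses the Gelfand--Tsetlin step is to encode the tiling as $n$ non-intersecting lattice paths exactly as in Section~\ref{sec:latticepath}, observe that a single boundary-to-boundary path contributes the complete homogeneous symmetric polynomial $h_k(x_1,\dots,x_m)$ in its net displacement $k$, apply the Lindstr\"om--Gessel--Viennot theorem to obtain $\det\!\big(h_{\lambda(P)_i-\mu(P^{\prime})_j-i+j}(x_1,\dots,x_m)\big)$, and recognize this as the Jacobi--Trudi determinant for $s_{\lambda(P)/\mu(P^{\prime})}$; there the only obstacle is checking the index shifts in the determinant, which is again pure bookkeeping.
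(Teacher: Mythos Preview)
The paper does not prove Theorem~\ref{thm.AF}: it is quoted verbatim from Ayyer and Fischer \cite[Theorem~6.1]{AF} and used as a black box in Section~\ref{sec:algproof}, so there is no ``paper's own proof'' to compare against.

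That said, your sketch is essentially the standard argument and is sound in outline. The slice-by-column description producing a chain of interlacing partitions $\mu(P^{\prime})=\theta^{(0)}\subseteq\cdots\subseteq\theta^{(m)}=\lambda(P)$, together with the identification of the column weight as $x_{c(i)}^{|\theta^{(i)}|-|\theta^{(i-1)}|}$, is exactly the Gelfand--Tsetlin bijection between such tilings and semistandard tableaux; the symmetry of skew Schur polynomials then absorbs the reindexing $c$. Your alternative via LGV and the Jacobi--Trudi determinant is equally standard and would also work. The only caveat is that both routes, as you acknowledge, leave the local interlacing/index bookkeeping unwritten; in a formal write-up you would want to spell out at least once why the column-by-column tiling constraint is precisely the horizontal-strip condition, since that is where the combinatorial content lives.
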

In particular, if $l=0$, then $P^{\prime} = \emptyset$, $\mu(P^{\prime}) = \emptyset$, and \eqref{eq.AFskew} simplifies to the following expression, which can be found in \cite[Theorem 2.3]{AF}:
\begin{equation}\label{eq.AFschur}
    \M_{\mathbf{x}}(T(n,m;P)) = s_{\lambda(P)}(x_1,\dots,x_m),
\end{equation}
where $\lambda(P) = (p_{m}-m,\dots,p_2-2,p_1-1)$.

We characterize below the partitions $\lambda(P)$ where the sets $P \subset[m+n]$ satisfy the condition \eqref{eq.condition}. By \eqref{eq.xtrapezoidsum} and Theorem \ref{thm.AF}, this allows us to express the generating function of $\r$-block diagonally symmetric lozenge tilings of $H(n,m,m)$ as a sum of Schur polynomials. Recall from \eqref{eq.Rminmax}, among all the sets $P$ satisfying \eqref{eq.condition}, $P_{\mathsf{min}}$ (resp., $P_{\mathsf{max}}$) is the set of minimal (resp., maximal) labels. It is easy to see that
\begin{equation}
    \lambda(P_{\mathsf{min}}) = (n-1)^{r_n}(n-2)^{r_{n-1}}\cdots 1^{r_2}0^{r_1} \text{\quad and \quad} \lambda(P_{\mathsf{max}}) = n^{r_n}(n-1)^{r_{n-1}}\cdots 2^{r_2}1^{r_1}.
\end{equation}
See Figures \ref{fig:Ydia1} and \ref{fig:Ydia2} for examples.
\begin{figure}[hbt!]
    \centering
    \subfigure[]{\label{fig:Ydia1}\includegraphics[height=0.19\textwidth]{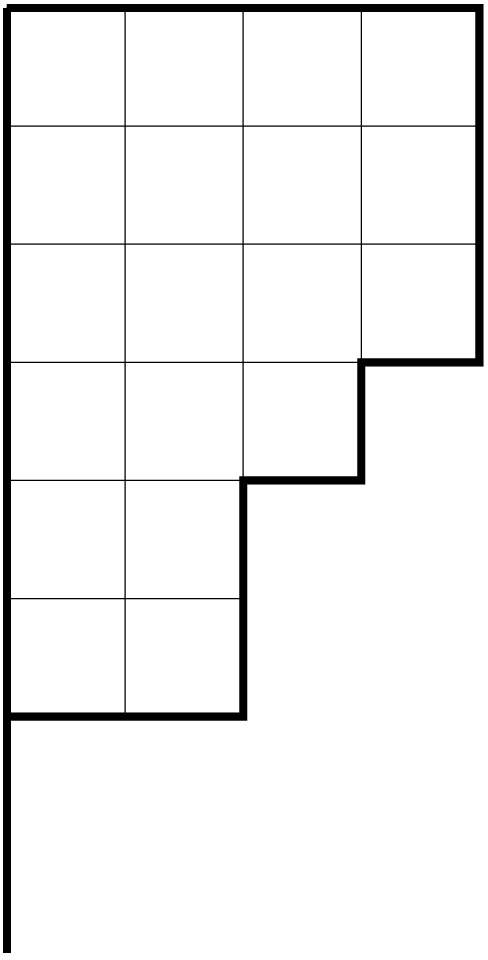}}
    \hspace{20mm}
    \subfigure[]{\label{fig:Ydia2}\includegraphics[height=0.19\textwidth]{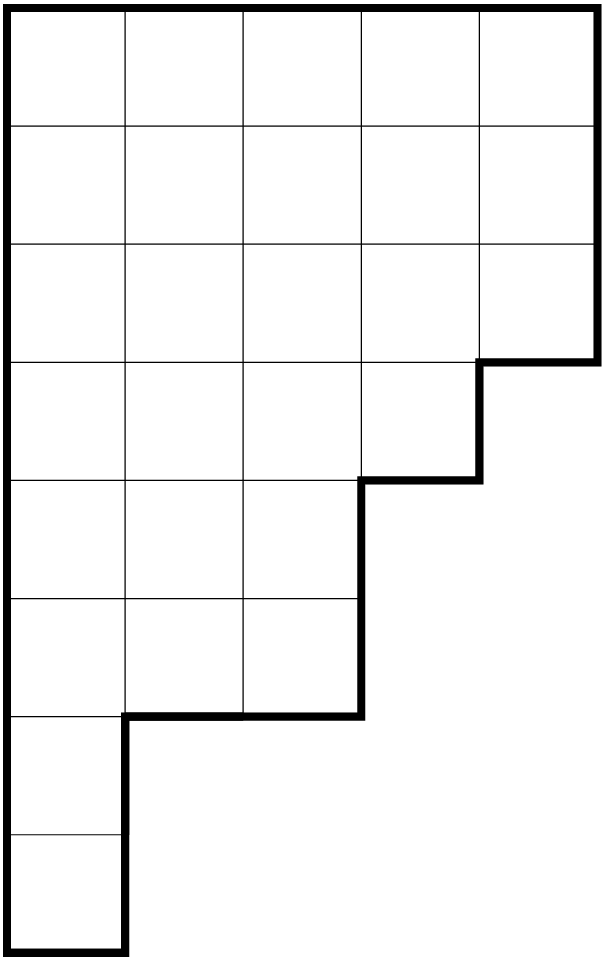}}
    \hspace{20mm}
    \subfigure[]{\label{fig:Ydia3}\includegraphics[height=0.19\textwidth]{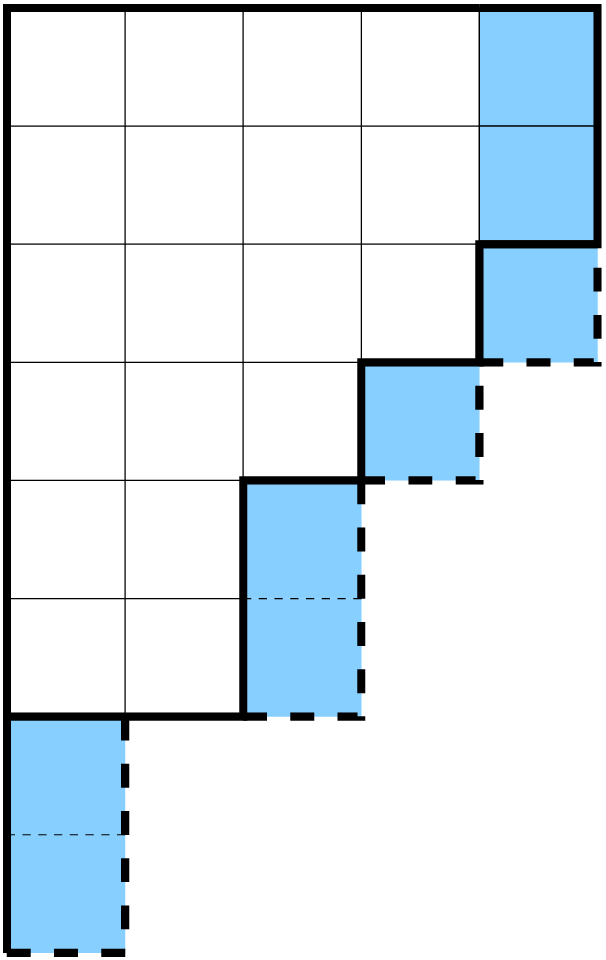}}
    \caption{When $\mathbf{r}=(2,0,2,1,3)$, the Young diagram of (a) the partition $\lambda(P_{\mathsf{min}})$, where $P_{\mathsf{min}}=\{1,2,5,6,8,10,11,12\}$; (b) the partition $\lambda(P_{\mathsf{max}})$, where $P_{\mathsf{max}}=\{2,3,6,7,9,11,12,13\}$; (c) the partition $\lambda(P)$ where $P=\{1,2,5,6,8,10,12,13\}$. The shaded boxes are the possible places to add vertically consecutive boxes.}
\end{figure}

For each $k$ in the condition \eqref{eq.condition}, the set $P$ contains $r_k$ elements selecting from $S_{k-1}+k,\dots,S_{k}+k-1,S_{k}+k$, which gives $r_k+1$ choices. Now, fix an integer $k, (1 \leq k \leq n)$, these $r_k+1$ choices correspond to partitions whose Young diagrams are obtained from $(k-1)^{r_k}$ by adding $0,1,\dots,r_k$ vertically consecutive boxes on the right, respectively. Figure \ref{fig:Ydia3} displays an example when $\mathbf{r}=(2,0,2,1,3)$, $k=5$, with the labels $\{10,12,13\}$ selected; the corresponding Young diagram is obtained from $\lambda(P_{\mathsf{min}})$ (Figure \ref{fig:Ydia1}) by adding two vertically consecutive boxes in the rightmost column. In other words, the condition \eqref{eq.condition} corresponds to the partitions $\lambda(P)$ satisfying $\lambda(P_{\mathsf{min}}) \subseteq \lambda(P) \subseteq \lambda(P_{\mathsf{max}})$. It is not hard to check that if $d(P) = i$ (i.e. the total difference between the labels in $P$ and $P_{\mathsf{min}}$; see \eqref{eq.difference}), then $\lambda(P)/\lambda(P_{\mathsf{min}})$ is an $i$-vertical strip. 

\subsection{The second proof of Theorem \ref{thm:1}}\label{sec:Thm2.4secondproof}

In the following lemma, we provide an auxiliary result about the product of the difference of each element in a given set, this will be used later to simplify the specialization of Schur polynomials.
\begin{lemma}\label{lem:product}
     Let $N$ be a positive integer, and let $I$ and $J$ be two disjoint sets such that $I \cup J = [N]$. Then
    \begin{equation}\label{eq.product}
        \prod_{\substack{1 \leq i_1 < i_2 \leq N \\ i_1,i_2 \in I}}[i_2-i_1]_q = \prod_{\substack{1 \leq j_1 < j_2 \leq N \\ j_1,j_2 \in J}}[j_2-j_1]_q \cdot \frac{\prod_{i=1}^{N-1}[i]_q!}{\prod_{j \in J}[j-1]_q! \cdot \prod_{j \in J}[N-j]_q!}.
    \end{equation}
\end{lemma}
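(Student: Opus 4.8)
The plan is to prove the identity \eqref{eq.product} by showing that both sides, viewed as rational functions in $q$, can be expressed in terms of the same ``global'' quantity, namely the full Vandermonde-type product $\prod_{1\le a<b\le N}[b-a]_q$ over \emph{all} of $[N]$. The key observation is that this global product splits according to whether the two indices both lie in $I$, both lie in $J$, or lie one in each. Concretely, write
\begin{equation*}
    \prod_{1\le a<b\le N}[b-a]_q = \Bigg(\prod_{\substack{i_1<i_2\\ i_1,i_2\in I}}[i_2-i_1]_q\Bigg)\Bigg(\prod_{\substack{j_1<j_2\\ j_1,j_2\in J}}[j_2-j_1]_q\Bigg)\Bigg(\prod_{\substack{i\in I,\,j\in J}}[\,|i-j|\,]_q\Bigg).
\end{equation*}
So \eqref{eq.product} is equivalent to the claim that
\begin{equation*}
    \prod_{\substack{i\in I,\, j\in J}}[\,|i-j|\,]_q = \frac{\prod_{j\in J}[j-1]_q!\cdot \prod_{j\in J}[N-j]_q!}{\prod_{i=1}^{N-1}[i]_q!}\cdot\Bigg(\prod_{\substack{j_1<j_2\\ j_1,j_2\in J}}[j_2-j_1]_q\Bigg)^{2}.
\end{equation*}
Rather than attack this symmetric form directly, I would instead isolate the mixed product $\prod_{i\in I,\,j\in J}[\,|i-j|\,]_q$ and compute it by a different bookkeeping: for a fixed $j\in J$, the multiset $\{\,|i-j| : i\in I\,\}$ together with $\{\,|j'-j| : j'\in J,\ j'\ne j\,\}$ is exactly $\{1,2,\dots,j-1\}\cup\{1,2,\dots,N-j\}$ (each value $|k-j|$ for $k\in[N]\setminus\{j\}$ occurs once). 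Hence
\begin{equation*}
    \prod_{i\in I}[\,|i-j|\,]_q = \frac{[j-1]_q!\,[N-j]_q!}{\prod_{\substack{j'\in J\\ j'\ne j}}[\,|j'-j|\,]_q}.
\end{equation*}
Taking the product over all $j\in J$ and noting that $\prod_{j\in J}\prod_{j'\in J,\,j'\ne j}[\,|j'-j|\,]_q = \big(\prod_{j_1<j_2,\,j_1,j_2\in J}[j_2-j_1]_q\big)^{2}$ gives the mixed product in the desired form.

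To finish, I would combine the two displays above: substituting the computed value of $\prod_{i\in I,\,j\in J}[\,|i-j|\,]_q$ into the Vandermonde splitting and cancelling the two squared $J$-Vandermonde factors yields precisely \eqref{eq.product}, after dividing through by $\prod_{j_1<j_2,\,j_1,j_2\in J}[j_2-j_1]_q^{2}$ and matching $\prod_{1\le a<b\le N}[b-a]_q$ with $\prod_{i=1}^{N-1}[i]_q!$ (these are equal, since grouping the pairs $(a,b)$ by the value $b-a$, or equivalently by $b$, gives $\prod_{b=2}^{N}[b-1]_q! $... more cleanly, $\prod_{1\le a<b\le N}[b-a]_q = \prod_{k=1}^{N-1}[k]_q^{\,N-k} = \prod_{i=1}^{N-1}[i]_q!$, both being the standard evaluation of the $q$-superfactorial). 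One has to be a little careful that the factorials $[j-1]_q!$ and $[N-j]_q!$ are nonzero as formal objects so the division steps are legitimate; since all quantities are polynomials in $q$ with nonzero constant term, this is automatic, or one may simply clear denominators and verify the resulting polynomial identity.

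The main obstacle is purely organizational: keeping the three regimes (both in $I$, both in $J$, mixed) straight and correctly identifying, for each fixed $j\in J$, that the multiset of distances $\{|k-j|:k\ne j\}$ is $\{1,\dots,j-1\}\sqcup\{1,\dots,N-j\}$ — this is the combinatorial heart of the argument and the step where an off-by-one error is easiest to make. Everything else is formal manipulation of $q$-factorials. An alternative, perhaps cleaner, route is induction on $|I|$: removing a single element from $I$ and adding it to $J$ changes each side by an explicitly computable ratio, and checking that these ratios agree reduces to the same distance-multiset identity; I would mention this as a remark but carry out the direct Vandermonde-splitting proof as the main one.
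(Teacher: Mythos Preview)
Your proposal is correct and follows essentially the same approach as the paper: split the full Vandermonde product $\prod_{1\le a<b\le N}[b-a]_q=\prod_{i=1}^{N-1}[i]_q!$ into the $I$-part, the $J$-part, and the mixed part, and evaluate the mixed part via the observation that for fixed $j\in J$ the distances to all other elements of $[N]$ multiply to $[j-1]_q!\,[N-j]_q!$. The only cosmetic difference is that the paper multiplies and divides by the $J$-Vandermonde to regroup $\bigl(\prod_{J}\cdot\prod_{i<j}\bigr)\bigl(\prod_{J}\cdot\prod_{j<i}\bigr)$ directly into $\prod_{j\in J}[j-1]_q!\cdot\prod_{j\in J}[N-j]_q!$, whereas you isolate the mixed product first and then substitute---same identity, same cancellation.
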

\begin{proof}
    In these products, the indices always belong to $[N]$. We express $\prod_{1\leq a < b\leq N}[b-a]_q$ in two different ways to prove the lemma. First, it is obvious that
    \begin{equation}\label{eq.lem4-1}
        \prod_{1\leq a < b\leq N}[b-a]_q = \prod_{i=1}^{N-1}[i]_q!.
    \end{equation}
    Next, we separate the factors of the product on the left-hand side of \eqref{eq.lem4-1} into four groups: the factors where both $a$ and $b$ are in $I$, the factors where both are in $J$, the factors where just $a$ is in $I$, and the factors where just $a$ is in $J$. We then obtain
    \begin{align}
        \prod_{1\leq a < b\leq N}[b-a]_q & = \prod_{\substack{ i_1<i_2 \\ i_1,i_2 \in I}}[i_2-i_1]_q \cdot \prod_{\substack{j_1<j_2  \\ j_1,j_2 \in J}}[j_2-j_1]_q \cdot \prod_{\substack{ i<j  \\ i \in I,j \in J}}[j-i]_q \cdot \prod_{\substack{j<i \\ i \in I,j \in J}}[i-j]_q \nonumber \\
        & = \frac{\prod_{\substack{ i_1<i_2 \\ i_1,i_2 \in I}}[i_2-i_1]_q}{\prod_{\substack{j_1<j_2  \\ j_1,j_2 \in J}}[j_2-j_1]_q} \cdot \left(\prod_{\substack{j_1<j_2  \\ j_1,j_2 \in J}}[j_2-j_1]_q  \prod_{\substack{ i<j  \\ i \in I,j \in J}}[j-i]_q  \right) \left(\prod_{\substack{j_1<j_2  \\ j_1,j_2 \in J}}[j_2-j_1]_q \prod_{\substack{j<i \\ i \in I,j \in J}}[i-j]_q \right) \nonumber \\
        & = \frac{\prod_{\substack{ i_1<i_2 \\ i_1,i_2 \in I}}[i_2-i_1]_q}{\prod_{\substack{j_1<j_2  \\ j_1,j_2 \in J}}[j_2-j_1]_q} \cdot \prod_{a<j,\ j \in J}[j-a]_q \cdot \prod_{j<b,\ j \in J}[b-j]_q \nonumber \\
        & = \frac{\prod_{\substack{ i_1<i_2 \\ i_1,i_2 \in I}}[i_2-i_1]_q}{\prod_{\substack{j_1<j_2  \\ j_1,j_2 \in J}}[j_2-j_1]_q} \cdot \prod_{j \in J}[j-1]_q! \cdot \prod_{j \in J}[N-j]_q!. \label{eq.lem4-2}
    \end{align}

    Combining \eqref{eq.lem4-1} and \eqref{eq.lem4-2} yields the desired result.
\end{proof}

The second proof of Theorem \ref{thm:1} is presented below.
\begin{proof}[Second proof of Theorem \ref{thm:1}]
    Based on the above characterization of $\lambda(P)$, by \eqref{eq.xtrapezoidsum} and \eqref{eq.AFschur},
    \begin{equation}\label{eq.pf2-1}
        \M_{\x}^{\mathbf{r}} (H(n,m,m))  = \sum_{P} \M_{\x}(T(n,m;P)) = \sum_{\lambda(P)}s_{\lambda(P)}(x_1,\dots,x_m),
    \end{equation}
        where the first sum runs over the sets $P \subseteq [m+n]$ satisfying \eqref{eq.condition} and the last sum is over the partitions $\lambda(P)$ satisfying $\lambda(P)/\lambda(P_{\mathsf{min}})$ is a vertical strip.

    To evaluate the sum \eqref{eq.pf2-1}, we recall the dual Pieri rule (see for instance \cite[Page 340]{Stanley}): for any partition $\lambda$ and positive integer $i$,
    \begin{equation}\label{eq.dualPR}
        s_{\lambda}(x_1,\dots,x_m)e_i(x_1,\dots,x_m) = \sum_{\text{$\lambda^{+}/\lambda$ $i$-vert. strip}}s_{\lambda^{+}}(x_1,\dots,x_m),
    \end{equation}
    where $\displaystyle e_i(x_1,\dots,x_m) = \sum_{1\leq j_1<j_2<\cdots<j_i\leq m}x_{j_1}x_{j_2}\cdots x_{j_i}$ is the \textit{$i$th elementary symmetric polynomial} in $m$ variables. Therefore, by the dual Pieri rule, we have
    \begin{equation}\label{eq.pf2-2}
        s_{\lambda(P_{\mathsf{min}})}(x_1,\dots,x_m)e_i(x_1,\dots,x_m) = \sum_{\{\lambda(P)| d(P)=i\}}s_{\lambda(P)}(x_1,\dots,x_m).
    \end{equation}
    Summing over all positive integers $i$ on both sides of \eqref{eq.pf2-2}, we obtain
    \begin{equation}\label{eq.pf2-3}
        s_{\lambda(P_{\mathsf{min}})}(x_1,\dots,x_m) \left( \sum_{i=0}^{m}e_i(x_1,\dots,x_m) \right) = \sum_{\lambda(P)}s_{\lambda(P)}(x_1,\dots,x_m).
    \end{equation}

    Thus, to obtain the product formula for the $(q,t)$-generating function, we specialize the weight $x_k = q^{k-1}t$ in \eqref{eq.pf2-3}. It remains to compute the left-hand side of \eqref{eq.pf2-3} under this specialization. For convenience, the partition $\lambda(P_{\mathsf{min}})$ is denoted by $\lambda^0$. Note that the sum of elementary symmetric polynomials factors into
    \begin{equation}\label{eq.sumofeifactor}
        \sum_{i=0}^{m}e_i(x_1,\dots,x_m)=\prod_{i=1}^{m}(1+x_i) = \prod_{i=1}^{m}(1+q^{i-1}t).
    \end{equation}
    Since the Schur polynomial $s_{\lambda^0}(x_1,\dots,x_m)$ is homogeneous of degree $|\lambda^0|=\sum_{i=1}^{n}(S_{n}-S_{i})$, then \eqref{eq.schurp1} implies that
    \begin{equation}\label{eq.pf2-5}
        s_{\lambda^0}(q^{0}t,\ldots,q^{m-1}t) = t^{\sum_{i=1}^{n}(S_{n}-S_{i})}s_{\lambda^0}(q^{0},\ldots,q^{m-1}).
    \end{equation}
    The principal specialization of Schur polynomials \eqref{eq.schurp2} gives the following expression:
    \begin{align}
        s_{\lambda^0}(q^{0},\ldots,q^{m-1})&=\frac{\prod_{1\leq i<j\leq m}(q^{\lambda^0_i+m-i}-q^{\lambda^0_j+m-j})}{\prod_{1\leq i<j\leq m}(q^{j-1}-q^{i-1})} \nonumber\\
        &=\frac{\prod_{1\leq i<j\leq m}q^{\lambda^0_j+m-j}\left( q^{(\lambda^0_i+m-i+1)-(\lambda^0_j+m-j+1)}-1 \right)}{\prod_{1\leq i<j\leq m}q^{i-1}(q^{j-i}-1)} \nonumber\\
        &=\frac{q^{\sum_{1\leq i<j\leq m}(\lambda^0_j+m-j)}}{q^{\sum_{1\leq i<j\leq m}(i-1)}}\cdot\frac{\prod_{1\leq i<j\leq m}[(\lambda^0_i+m-i+1)-(\lambda^0_j+m-j+1)]_q}{\prod_{1\leq i<j\leq m}[j-i]_q}. \label{eq.pf2-6}
    \end{align}

    One can readily check that $\sum_{1\leq i<j\leq m}(m-j)=\frac{m(m-1)(m-2)}{6}=\sum_{1\leq i<j\leq m}(i-1)$. Since $\lambda^0 = (n-1)^{r_n}(n-2)^{r_{n-1}}\ldots1^{r_2}0^{r_1}$, we may write $\lambda^0_j = \sum_{i=1}^n\chi(\lambda^0_j \geq i)$, where $\chi(P) = 1$ if the statement $P$ is true and $\chi(P) = 0$ otherwise. Then we have
    \begin{equation*}
       \sum_{1\leq i<j\leq m}\lambda^0_j = \sum_{j=1}^{m}(j-1)\lambda^0_j = \sum_{j=1}^{m}\sum_{i=1}^n (j-1) \chi(\lambda^0_j \geq i) = \sum_{i=1}^n \sum_{j=1}^{m} (j-1) \chi(\lambda^0_j \geq i) =
       \sum_{i=1}^{n}\sum_{j=1}^{S_n-S_i}(j-1)=\sum_{i=1}^{n}\binom{S_n-S_i}{2}.
    \end{equation*}
    Thus, the first term in the last line of \eqref{eq.pf2-6}, which is a quotient of powers of $q$, simplifies to $q^{\alpha}$ where $\alpha = \sum_{i=1}^n \binom{S_n-S_i}{2}$.

    For the second term in the last line of \eqref{eq.pf2-6}, the denominator can be rewritten as $\prod_{1\leq i<j\leq m}[j-i]_q = \prod_{i=1}^{m-1}[i]_q!$. One can check that\footnote{Note that $\lambda^0_{i}+m-i+1$ and $\lambda^0_{i+1}+m-(i+1)+1$ are differ by $1$ if and only if $i\notin\{S_{n}-S_{n-1}, S_{n}-S_{n-2},\ldots, S_{n}-S_{1}\}$, because $\lambda^0_{i}=\lambda^0_{i+1}$ precisely when $i$ is excluded from this set. Moreover, when $i\in\{S_{n}-S_{n-1}, S_{n}-S_{n-2},\ldots, S_{n}-S_{1}\}$, $\lambda^0_{i}+m-i+1$ and $\lambda^0_{i+1}+m-(i+1)+1$ differ by $2$ and the numbers we skip are $S_{i}+i$, for $i=1,\ldots,n-1$.} $\{\lambda^0_{i}+m-i+1~|~i=1,\dots,m\} = [S_n+n]\setminus \{S_k+k~|~k=1,\dots,n\}$. The numerator can be obtained from Lemma \ref{lem:product} by setting $N = S_n+n$, $I=[S_n+n]\setminus \{S_k+k|k=1,\dots,n\}$, and $J=\{S_k+k~|~k=1,\dots,n\}$. Therefore,
    \begin{equation}\label{eq.pf2-8}
        \prod_{1\leq i<j\leq m}[(\lambda^0_{i}+m-i+1)-(\lambda^0_{j}+m-j+1)]_q = \prod_{1\leq i<j\leq n}[S_j-S_i+j-i]_q \cdot \frac{\prod_{i=1}^{S_n+n-1}[i]_q!}{\prod_{i=1}^{n}[S_i+i-1]_q!\prod_{i=1}^{n}[S_n-S_i+n-i]_q!}.
    \end{equation}

    Finally, combining \eqref{eq.sumofeifactor}, \eqref{eq.pf2-5}, \eqref{eq.pf2-6}, and \eqref{eq.pf2-8}, we obtain the desired result.
\end{proof}
\begin{remark}\label{rmk.asm}
    As mentioned at the end of Section \ref{sec:rblocklozenge}, when we specialize $r_1= \cdots =r_n =2$, the number of $\r$-block diagonally symmetric lozenge tilings of $H(n,2n,2n)$ coincides with the number of ASMs of size $n$ up to some factors. In fact, this connection came from an algebraic expression of the number of ASMs as characters of classical groups due to Okada \cite[Theorem 1.2]{Okada06} (see also \cite[Section 2]{Okada06} for definitions). He showed that
    \begin{equation}
        \prod_{k=0}^{n-1}\frac{(3k+1)!}{(n+k)!} = 3^{-n(n-1)/2} \cdot \dim \mathrm{GL}_{2n}(\delta),
    \end{equation}
    where $\dim \mathrm{GL}_{2n}(\delta)$ denotes the dimension of the irreducible representation of $\mathrm{GL}_{2n}$ with highest weight $\delta=(n-1,n-1,n-2,n-2,\dots,1,1,0,0)$. Note that this is given by the Schur polynomial $s_{\delta}(1,1,\dots,1)$ (see for instance \cite[Chapter 7, Appendix 2]{Stanley}). On the other hand, when $r_i=2$ for all $i$, the set $P_{\mathsf{min}} = [3n]\setminus \{3k|k=1,\dots,n\}$ and the corresponding partition $\lambda(P_{\mathsf{min}})$ is equal to $\delta$. By \eqref{eq.pf2-1} and \eqref{eq.pf2-3},
    \begin{equation}
        \M^{\r}(H(n,2n,2n)) = 2^{2n} \cdot s_{\delta}(1,1,\dots,1).
    \end{equation}
    This explains why the ASM numbers appear in our special case algebraically. However, finding a direct bijection between ASMs and our block diagonally symmetric lozenge tilings remains open.
\end{remark}

\subsection{Proof of Theorem \ref{thm:2}}\label{sec:Thm2.10proof}

We remind the reader from \eqref{eq.traptwodentssum} that $(\r,\r^{\prime})$-block diagonally symmetric lozenge tilings of $H(n+l,m,m)$ correspond to lozenge tilings of regions $T(n+l,m;P,P^{\prime})$ where the sets $P \subset [m+n+l]$ satisfy \eqref{eq.condition} while $P^{\prime} \subset [n+l]$ satisfy \eqref{eq.condition2}. By Theorem \ref{thm.AF}, we can express weighted lozenge tilings of $T(n+l,m;P,P^{\prime})$ as a skew Schur polynomial associated with the partition $\lambda(P)/\mu(P^{\prime})$. To evaluate the signed sum of these skew Schur polynomials over the sets $P$ and $P^{\prime}$ subject to the conditions mentioned above, we need the following dual Pieri rule for skew Schur polynomials due to Assaf and McNamara \cite{AM11}.
\begin{theorem}{\cite[Corollary 3.3]{AM11}}\label{thm.AM}
    For any skew shape $\lambda/\mu$ and any positive integer $i$, we have
    \begin{equation}\label{eq.skewpierirule}
        s_{\lambda/\mu}(x_1,\dots,x_m)e_i(x_1,\dots,x_m) = \sum_{k=0}^{i}(-1)^k \sum_{\substack{\lambda^{+}/\lambda \text{ $(i-k)$-vert. strip}\\ \mu/\mu^{-} \text{ $k$-hor. strip}}}s_{\lambda^{+}/\mu^{-}}(x_1,\dots,x_m),
    \end{equation}
where the sum is over all partitions $\lambda^{+}$ and $\mu^{-}$ such that $\lambda^{+}/\lambda$ is a vertical strip of size $i-k$ and $\mu/\mu^{-}$ is a horizontal strip of size $k$.
\end{theorem}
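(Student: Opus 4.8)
The plan is to prove \eqref{eq.skewpierirule} as an identity in the ring $\Lambda$ of symmetric functions in countably many variables, from which the stated $m$-variable identity follows at once by specializing $x_{m+1}=x_{m+2}=\cdots=0$ (terms $s_{\lambda^+/\mu^-}$ with more than $m$ rows simply vanish). The engine is the Hopf-algebra structure of $\Lambda$: the Hall inner product with $\langle s_\lambda,s_\mu\rangle=\delta_{\lambda\mu}$, and the skewing operators $f^\perp$ adjoint to multiplication, $\langle f^\perp x,y\rangle=\langle x,fy\rangle$, so that $s_\mu^\perp s_\lambda=s_{\lambda/\mu}$ and skewing is an algebra map, $(fg)^\perp=f^\perp g^\perp$. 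I would assemble four standard ingredients: the dual Pieri rule \eqref{eq.dualPR} in the form $e_j\,s_\lambda=\sum_{\lambda^+/\lambda\ j\text{-vert.}}s_{\lambda^+}$; its adjoint $h_k^\perp s_\mu=\sum_{\mu/\mu^-\ k\text{-hor.}}s_{\mu^-}$, which is the classical Pieri rule dualized; the coproduct $\Delta e_j=\sum_{a+b=j}e_a\otimes e_b$ together with the compatibility $e_j^\perp(fg)=\sum_{a+b=j}(e_a^\perp f)(e_b^\perp g)$; and the Newton relation $\sum_{a+k=j}(-1)^k e_a h_k=\delta_{j,0}$, equivalently $E(t)H(-t)=1$ for the generating functions $E,H$ of the $e$'s and $h$'s.

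Since the Schur functions form an orthonormal basis, it suffices to show that both sides of \eqref{eq.skewpierirule} pair identically with an arbitrary $s_\rho$. For the left-hand side this is immediate: by adjunction, $\langle s_{\lambda/\mu}\,e_i,s_\rho\rangle=\langle s_\mu^\perp s_\lambda,\,e_i^\perp s_\rho\rangle=\langle s_\lambda,\,s_\mu\,(e_i^\perp s_\rho)\rangle$. The work is then to reduce the right-hand side to this same expression. First I would carry out the two inner summations in \eqref{eq.skewpierirule} using the two Pieri rules: summing over the vertical strips $\lambda^+$ turns $\sum_{\lambda^+}\langle s_{\lambda^+},s_{\mu^-}s_\rho\rangle$ into $\langle e_{i-k}s_\lambda,\,s_{\mu^-}s_\rho\rangle$, and summing over the horizontal strips $\mu^-$ replaces $\sum_{\mu^-}s_{\mu^-}$ by $h_k^\perp s_\mu$. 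This expresses the $s_\rho$-coefficient of the right-hand side as $\sum_{k=0}^i(-1)^k\langle e_{i-k}s_\lambda,\,(h_k^\perp s_\mu)\,s_\rho\rangle$.

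Next I would move $e_{i-k}$ across by adjunction and expand it through the product $(h_k^\perp s_\mu)\,s_\rho$ using the coproduct compatibility, obtaining the double sum $\sum_{k}\sum_{a+b=i-k}(-1)^k\langle s_\lambda,\,(e_a^\perp h_k^\perp s_\mu)(e_b^\perp s_\rho)\rangle$, where $e_a^\perp h_k^\perp=(e_a h_k)^\perp$ because skewing is an algebra map. Reindexing by the outer variable $b$ and collecting the inner alternating sum yields the skewing operator of $\sum_{a+k=i-b}(-1)^k e_a h_k$, which by the Newton relation equals $\delta_{b,i}$. Hence every term with $b<i$ cancels, only $b=i$ survives, and the coefficient collapses to $\langle s_\lambda,\,s_\mu\,(e_i^\perp s_\rho)\rangle$, matching the left-hand side; as $\rho$ is arbitrary, \eqref{eq.skewpierirule} follows.

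The main obstacle is conceptual rather than computational: recognizing that the signs $(-1)^k$ attached to the horizontal-strip removals are engineered precisely so that, after the coproduct expansion, the $h_k$'s and $e_a$'s recombine into the cancellation $E(t)H(-t)=1$. The remaining difficulty is careful bookkeeping — correctly dualizing the horizontal and vertical Pieri rules (so that $e$ controls vertical strips on the outer shape and $h^\perp$ controls horizontal strips on the inner shape) and tracking the three summation indices $k,a,b$ so that the reorganization into the clean collapse at $b=i$ is transparent.
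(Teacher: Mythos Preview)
The paper does not prove this theorem at all: it is quoted verbatim as \cite[Corollary 3.3]{AM11} and used as a black box in the proof of Theorem~\ref{thm:2}. So there is no ``paper's own proof'' to compare against.

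Your argument is correct and is essentially the Hopf-algebraic proof of the skew Pieri rule (in the spirit of Lam--Lauve--Sottile). All four ingredients you list are standard and you assemble them correctly: pairing both sides with $s_\rho$, using $s_{\lambda^+/\mu^-}=s_{\mu^-}^\perp s_{\lambda^+}$ to unwind the right-hand side, collapsing the $\lambda^+$-sum via the dual Pieri rule and the $\mu^-$-sum via the adjoint of the ordinary Pieri rule, then pushing $e_{i-k}$ across and using $\Delta e_j=\sum_{a+b=j}e_a\otimes e_b$ to split $e_{i-k}^\perp$ over the product. The key cancellation $\sum_{a+k=i-b}(-1)^k e_a h_k=\delta_{b,i}$ is exactly the coefficient identity encoded in $E(t)H(-t)=1$, and it leaves only the $b=i$ term, matching the left-hand side. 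One small remark: when you say ``skewing is an algebra map, $(fg)^\perp=f^\perp g^\perp$'', this is true but worth justifying once, since a priori the adjoint of $m_f\circ m_g$ is $g^\perp\circ f^\perp$; commutativity of $\Lambda$ is what makes the order irrelevant.

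For context, Assaf and McNamara's original proof is combinatorial, via a sign-reversing involution on pairs of tableaux; your route is purely algebraic and arguably more transparent for the purposes of this paper, since it makes explicit that the $(-1)^k$ on the horizontal-strip side is precisely the sign needed to invoke the Newton relation.
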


Now, we are ready to prove Theorem \ref{thm:2}.
\begin{proof}[Proof of Theorem \ref{thm:2}]
    The characterization of partitions $\lambda(P)$ is the same as before described in Section \ref{sec:Schur}, that is, if $d(P) = i$, then $\lambda(P)/\lambda(P_{\mathsf{min}})$ is an $i$-vertical strip. Similarly, for the sets $P^{\prime}$ satisfying \eqref{eq.condition2}, we have $\mu(P^{\prime}_{\mathsf{min}}) \subseteq \mu(P^{\prime}) \subseteq \mu(P^{\prime}_{\mathsf{max}})$. If $d^{\prime}(P^{\prime}) = i$ (i.e. the total difference between the labels in $P^{\prime}$ and $P^{\prime}_{\mathsf{max}}$; see \eqref{eq.Rminmax}), then $\mu(P^{\prime}_{\mathsf{max}})/\mu(P^{\prime})$ is an $i$-vertical strip.
    
    However, by the assumption that the tuple $\r^{\prime} = (r_1^{\prime},\dots,r_n^{\prime})$ satisfies $r_i^{\prime} = 1$ for $i=1,\dots,l$ and $r^{\prime}_i = 0$ for $i=l+1,\dots,n$, the set $P^{\prime}_{\mathsf{max}} = \{2,4,6,\dots,2l\}$. This implies that the partition $\mu(P^{\prime}_{\mathsf{max}}) = (l,l-1,\dots,1)$ is a staircase shape. Thus, $\mu(P^{\prime}_{\mathsf{max}})/\mu(P^{\prime})$ is indeed a union of single boxes in distinct columns; therefore, it is also a horizontal strip.
    
    By \eqref{eq.trapetwodentssign} and Theorem \ref{thm.AF},
    \begin{align}
        \M^{\r,\r^{\prime}}_{\x,\sgn}\left( H(n+l,m,m) \right) & = \sum_{P,P^{\prime}} (-1)^{d^{\prime}(P^{\prime})}\M_{\x}\left( T(n+l,m;P,P^{\prime}) \right) \nonumber \\
        & = \sum_{\lambda(P),\mu(P^{\prime})} (-1)^{d^{\prime}(P^{\prime})} s_{\lambda(P)/\mu(P^{\prime})}(x_1,\dots,x_m), \label{eq.pf3-1}       
    \end{align}
    where the first sum runs over the sets $P \subset [m+n+l]$ satisfying \eqref{eq.condition} and the sets $P^{\prime} \subset [n+l]$ satisfying \eqref{eq.condition2}; the last sum runs over the partitions $\lambda(P)$ such that $\lambda(P)/\lambda(P_{\mathsf{min}})$ is a vertical strip and the partitions $\mu(P)$ satisfying $\mu(P^{\prime}_{\mathsf{max}})/\lambda(P^{\prime})$ is a vertical strip (also a horizontal strip).

    By Theorem \ref{thm.AM}, we have the following expression:
    \begin{equation}\label{eq.pf3-2}
        s_{\lambda(P_{\mathsf{min}})/\mu(P^{\prime}_{\mathsf{max}})}(x_1,\dots,x_m)e_i(x_1,\dots,x_m) = \sum_{k=0}^{i}(-1)^k \sum_{\substack{\lambda(P)/\lambda(P_{\mathsf{min}}) \text{ $(i-k)$-vert. strip}\\ \mu(P^{\prime}_{\mathsf{max}})/\mu(P^{\prime}) \text{ $k$-hor. strip}}}s_{\lambda(P)/\mu(P^{\prime})}(x_1,\dots,x_m).
    \end{equation}
    Next, we sum over all positive integers $i$ on both sides of \eqref{eq.pf3-2}. The left-hand side yields
    \begin{equation}\label{eq.pf3-3}
        s_{\lambda(P_{\mathsf{min}})/\mu(P^{\prime}_{\mathsf{max}})}(x_1,\dots,x_m) \left( \sum_{i=0}^{m} e_i(x_1,\dots,x_m) \right) = \M_{\x}(T(n+l,m;P_{\mathsf{min}},P^{\prime}_{\mathsf{max}}))\cdot \prod_{i=1}^m(1+x_i).
    \end{equation}
    On the right-hand side, if we interchange the order of summation (from $\sum_{i\geq0}\sum_{k=0}^i$ to $\sum_{k\geq0}\sum_{i\geq  k}$), then
    \begin{equation}\label{eq.pf3-4}
        \sum_{k\geq 0}(-1)^k \sum_{\substack{\lambda(P)/\lambda(P_{\mathsf{min}}) \text{ vert. strip}\\ \mu(P^{\prime}_{\mathsf{max}})/\mu(P^{\prime}) \text{ $k$-hor. strip}}}s_{\lambda(P)/\mu(P^{\prime})}(x_1,\dots,x_m) =  \sum_{\lambda(P),\mu(P^{\prime})}  (-1)^{d^{\prime}(P^{\prime})} s_{\lambda(P)/\mu(P^{\prime})}(x_1,\dots,x_m),
    \end{equation}
    which is equal to \eqref{eq.pf3-1}. Finally, combining \eqref{eq.pf3-1}, \eqref{eq.pf3-4}, and \eqref{eq.pf3-3}, we obtain the desired identity.
\end{proof}

\section*{Acknowledgments}
The authors would like to thank Mihai Ciucu for his interest in this work and insightful suggestions. The authors also thank anonymous referees for carefully reading the manuscript and giving useful comments.

\end{document}